\theoremstyle{definition}
\newtheorem{theorem}{Theorem}[section]
\newtheorem{proposition}[theorem]{Proposition}
\newtheorem{lemma}[theorem]{Lemma}
\newtheorem{corollary}[theorem]{Corollary}
\newtheorem*{acknowledgements}{Acknowledgements}
\numberwithin{equation}{section}
\renewcommand\div{\operatorname{div}}
\DeclareMathOperator*{\im}{im}
\DeclareMathOperator*{\Diff}{Diff}
\newcommand{\tr}{\operatorname{tr}}
\newcommand{\wt}{\widetilde}
\newcommand{\Lap}{\Delta}
\newcommand{\Ric}{\operatorname{Ric}}
\newcommand{\Hess}{\operatorname{Hess}}
\renewcommand*\d{\mathop{}\!\mathrm{d}}
\newcommand{\cK}{\mathcal{K}}
\newcommand{\cS}{\mathcal{S}}
\newcommand{\eps}{\varepsilon}
\newcommand{\spec}{\text{spec}}
\newcommand{\vertiii}[1]{{\left\vert\kern-0.25ex\left\vert\kern-0.25ex\left\vert #1 
    \right\vert\kern-0.25ex\right\vert\kern-0.25ex\right\vert}}
\title{Rigidity of spherical product Ricci solitons}
\author{Ao Sun}
\address{Department of Mathematics,
University of Chicago,
5734 S. University Avenue,
Chicago, IL 60637, USA}
\email{aosun@uchicago.edu}
\author{Jonathan J. Zhu}
\address{Mathematical Sciences Institute, Australian National University, Hanna Neumann Building, Science Road, Canberra, ACT 2601, Australia}
\email{jjzhu@math.princeton.edu}
\begin{document}
\begin{abstract}
We show that $S^2\times S^2$ is isolated as a shrinking Ricci soliton in the space of metrics, up to scaling and diffeomorphism. We also prove the same rigidity for $S^2\times N$, where $N$ belongs to a certain class of closed Einstein manifolds. These results are the Ricci flow analogues of our results for Clifford-type shrinking solitons for the mean curvature flow. 
\end{abstract}
\date{\today}
\maketitle

\section{Introduction}

Ricci solitons are metrics that flow by rescaling (up to diffeomorphism) under the Ricci flow. They were first introduced by Hamilton \cite{H1} to study the singular behaviour of the Ricci flow. \textit{Shrinking} solitons are those that contract under Ricci flow, and relate to the \textit{formation} of singularities. A gradient shrinking Ricci soliton is a Riemannian manifold $(M,g)$ that satisfies the equation
\begin{equation}\label{eq:grad-soliton} \Ric(g) +\Hess^g f=\frac{1}{2\tau }g \end{equation} 
for some smooth function $f$ on $M$ and some $\tau>0$. Note that equation (\ref{eq:grad-soliton}) has two natural symmetries: diffeomorphisms of $M$, and rescaling (which results in a rescaled $\tau$). In the compact setting, every closed shrinking Ricci soliton is a gradient shrinking soliton. Shrinking solitons often arise as Type I singularity
models of Ricci flow, and also as tangent flows, which were recently introduced by Bamler in \cite{B}.

In this paper, we study the rigidity of certain gradient shrinking Ricci solitons $(M,g)$, that is, whether $(M,g)$ is isolated in the space of shrinking Ricci solitons (modulo diffeomorphism and rescaling). Our first main theorem is the following local rigidity theorem for $S^2\times S^2$.

\begin{theorem}\label{thm:main1}
	Let $(M,g)$ be $S^2\times S^2$ with the round unit metric on each factor. There is a $C^{2,\alpha}$-neighbourhood $\mathcal{U}$ of $g$ such that any other gradient shrinking Ricci soliton in $\mathcal{U}$, is equivalent to $g$ up to diffeomorphism and rescaling. 
\end{theorem}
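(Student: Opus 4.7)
The strategy is to gauge-fix the diffeomorphism and scaling symmetries of equation (\ref{eq:grad-soliton}), linearize, and apply an implicit function theorem (or Lyapunov-Schmidt reduction if the linearization has a nontrivial kernel). First I would fix an Ebin-Palais slice for the $\Diff(M)$-action by imposing the Bianchi gauge $\delta_{g_0} h - \tfrac{1}{2} d \tr_{g_0} h = 0$ on perturbations $h = g - g_0$, and normalize $\tau = \tfrac{1}{2}$ to remove scaling. Nearby gradient shrinking solitons (modulo diffeomorphism and rescaling) then correspond to small zeros of
\[
\Phi(g, f) := \Ric(g) + \Hess^g f - g
\]
restricted to the slice.

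Next I would linearize $\Phi$ at $(g_0, f_0 \equiv 0)$, where $g_0 = g_{S^2} \oplus g_{S^2}$ is Einstein with $\Ric = g_0$. Using standard formulas for $D\Ric$ combined with the Bianchi slice condition, the linearization on the TT-part of $h$ reduces to $\tfrac{1}{2}(\Delta_L - 2)h + \Hess k$, where $\Delta_L$ is the Lichnerowicz Laplacian; the equation on $k$ then forces $k$ to be constant. The crux of the argument is to show that the kernel consists only of trivial deformations, namely multiples of $g_0$ from scaling and constant shifts of $f$.

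To analyze the kernel on $S^2 \times S^2$, I would decompose symmetric $2$-tensors according to the product structure into pure-first-factor, pure-second-factor, and mixed blocks, and further decompose each block by joint spherical harmonics on the two factors. The Lichnerowicz spectrum on round $S^2$ is explicit, so one can enumerate all modes at eigenvalue $2$ and check each against the gauge and scaling directions. The mode to watch carefully is the parallel TT tensor $h = g_1 - g_2$, corresponding to scaling the two factors oppositely; a direct computation pins down its Lichnerowicz eigenvalue (the Ricci and curvature operator contributions combine to a value distinct from $2h$), so $h$ does \emph{not} produce nontrivial kernel. The mixed blocks, built from $1$-forms on each factor, can be handled similarly using that the first nonzero Hodge eigenvalue on $S^2$ is $2$ and tracking through the product curvature terms.

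\textbf{Main obstacle.} The principal difficulty is this kernel analysis: one must account for every block (pure and mixed) and every spherical-harmonic level, which amounts to a careful spectral calculation on the product Einstein background. Once the kernel is shown to consist only of trivial elements, the implicit function theorem in $C^{2,\alpha}$ yields a neighborhood $\mathcal{U}$ in which the only gradient shrinking Ricci solitons are equivalent to $g_0$. If any mode formally persists in the kernel, one would instead run a Lyapunov-Schmidt reduction and show the second-order obstruction on the kernel admits only the trivial solution, paralleling our arguments for Clifford-type shrinkers under mean curvature flow; equivalently, one could invoke a Lojasiewicz-Simon inequality for Perelman's $\nu$-entropy at the critical point $g_0$.
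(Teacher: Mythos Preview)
Your plan has a genuine gap at the key step: the linearized kernel is \emph{not} reducible to symmetries on $S^2\times S^2$. Besides $\mathcal{K}_0 = \mathbb{R}g + \im\mathcal{L}$, there is a six-dimensional space $\mathcal{K}_1 = \{vg \mid (\Lap+2)v=0\}$ of conformal Jacobi deformations, coming from the coordinate functions on each $S^2$ factor (the first nonzero Laplace eigenvalue on the unit $S^2$ is exactly $2$). These are not TT tensors, so your enumeration by pure and mixed TT blocks misses them entirely; the mode $g_1-g_2$ you single out is not the issue. Consequently the implicit function theorem does not close the argument.

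Your fallback also fails as stated: Kr\"oncke had already computed that for $S^2\times S^2$ the second-order obstruction vanishes identically, i.e.\ $\pi_{\mathcal{K}}\mathcal{D}^2\Phi(vg,vg)=0$ for all $v\in\ker(\Lap+2)$, so a Lyapunov--Schmidt reduction truncated at second order yields nothing. What the paper does is push to third order: one first solves $L(ug+h)=-\tfrac{1}{2}\mathcal{D}^2\Phi(vg,vg)$ explicitly using the structure of spherical harmonics, and then shows that the projected third-order quantity
\[
\pi_{\mathcal{K}_1}\Bigl(\mathcal{D}^3\Phi(vg,vg,vg)+6\,\mathcal{D}^2\Phi(ug+h,vg)\Bigr)
\]
is bounded below by $\delta\|v\|^3$. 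This third-order obstruction, combined with Taylor expansion of $\Phi$ in H\"older norms and iterated Schauder estimates, yields the quantitative bound $\|h\|_{C^{2,\alpha}}^3 \leq C\|\Phi(g+h)\|_{C^{0,\alpha}}$, from which rigidity follows. The explicit solution of the inhomogeneous second-order equation and the resulting third-order integral computation are the substantive new ingredients that your proposal does not anticipate.
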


The question of local rigidity of $S^2\times S^2$ was previously posed by Kr\"{o}ncke \cite[Example 6.4]{Kr2}. Our approach here is the Ricci flow analogue of our methods in \cite{SZ} for $S^m\times S^n$ as mean curvature flow solitons (see also \cite{ELS},  \cite{Z20}). These methods are needed to handle non-integrable infinitesimal deformations. In contrast to the MCF setting, as Ricci solitons such deformations only arise when at least one spherical factor has dimension $2$.

Our methods also can be used to show local rigidity for a class of product manifolds $S^2\times N$, where $N$ is a 1-Einstein manifold satisfying 
	\begin{itemize}
	\item[($\dagger$)] $\lambda_1(\Lap^N)>2$, and $\ker_{TT}(\Lap_L^N+2)=0$. 
	\end{itemize}
	Here we use the sign convention $\Lap=\div\nabla$ for the Laplacian on functions, $\lambda_1$ is the first nonzero eigenvalue and $\Lap_L$ is the Lichnerowicz Laplacian; $\ker_{TT}$ refers to the kernel when acting on transverse traceless 2-tensors. 
	We prove the following: 

\begin{theorem}\label{thm:main2}
	Let $(M,g)$ be $S^2\times N$, where $S^2$ has the round unit metric and $(N,g_N)$ is a closed $1$-Einstein manifold satisfying $(\dagger)$. There is a $C^{2,\alpha}$-neighbourhood $\mathcal{U}$ of $g$ such that any other gradient shrinking Ricci soliton in $\mathcal{U}$, is equivalent to $g$ up to diffeomorphism and rescaling. 
\end{theorem}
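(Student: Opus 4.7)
The plan is to adapt the strategy of \cite{SZ} for Clifford-type MCF solitons, and of Theorem \ref{thm:main1} for $S^2\times S^2$, to the more general product $S^2\times N$. The hypothesis $(\dagger)$ is engineered precisely so that the factor $N$ contributes neither additional infinitesimal deformations nor additional higher-order obstructions, reducing the analytic heart of the argument to the computation already carried out on the $S^2$ factor. Note that $(M,g)$ is $1$-Einstein, hence already a gradient shrinker with $\tau=1/2$ and constant potential $f$.

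I would begin by setting up a slice. After applying a diffeomorphism and rescaling, any $C^{2,\alpha}$-close shrinking soliton can be written as $g' = g + h$ with $h$ in a Bianchi (divergence-free) gauge, and (\ref{eq:grad-soliton}) becomes a semilinear elliptic system for $h$ and the perturbed potential; equivalently, nearby shrinkers are critical points of Perelman's shrinker entropy $\nu$ on this slice, modulo the residual isometry group. The linearization at $g$ on TT-tensors is (up to normalization) a stability operator of Lichnerowicz type, and a product-manifold decomposition with separation of variables on the $N$ factor lets one analyze its kernel factor by factor. The hypothesis $(\dagger)$ removes every mode with nontrivial $N$-dependence: $\ker_{TT}(\Lap_L^N+2)=0$ kills pure-$N$ TT modes, while $\lambda_1(\Lap^N)>2$ kills every conformal or mixed mode coupling to a non-constant function on $N$. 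What remains is exactly the kernel already identified in the proof of Theorem \ref{thm:main1}: deformations built from the $\ell=1$ spherical harmonics on $S^2$ (spanning the $2$-eigenspace of $\Lap^{S^2}$), paired with constants on $N$.

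The main obstacle --- and the heart of the argument --- is to show that these residual infinitesimal deformations do not integrate to genuine soliton deformations modulo the symmetries of (\ref{eq:grad-soliton}). Following \cite{SZ}, I would expand the soliton equation (equivalently $\nu$) to second order in $h$ and project the resulting obstruction onto the kernel. Since every kernel element is constant along the $N$ factor, the $N$-integration contributes only an overall factor of $\vol(N)$, and the obstruction reduces to the same cubic integral of $\ell=1$ spherical harmonics over $S^2$ that was shown to be non-degenerate in the proof of Theorem \ref{thm:main1}. With a non-degenerate obstruction on a finite-dimensional kernel, a Lojasiewicz--Simon argument applied to the (real-analytic) functional $\nu$ forces the projection of $h$ onto the kernel to vanish, and a standard implicit function argument on the orthogonal complement then yields $h\equiv 0$ up to the symmetries. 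The main technical care, as in Theorem \ref{thm:main1}, is to verify that the mixed modes allowed by the product decomposition remain controlled throughout --- which is precisely what $(\dagger)$ is engineered to do.
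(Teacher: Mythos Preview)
Your outline captures the correct setup (slice gauge, kernel identification via $(\dagger)$, reduction to the $\ell=1$ spherical harmonics on the $S^2$ factor), but there is a genuine gap at the obstruction step. You write that you would ``expand the soliton equation \ldots\ to second order in $h$'' and that ``the obstruction reduces to the same cubic integral of $\ell=1$ spherical harmonics over $S^2$ that was shown to be non-degenerate.'' In fact this second-order obstruction \emph{vanishes identically}: the projection $\pi_{\mathcal{K}_1}\mathcal{D}^2\Phi(vg,vg)$ amounts to integrating a spherical polynomial of odd total degree against the weight, which is zero. This is exactly why $S^2\times S^2$ was left open in \cite{Kr2}, and why the paper (as in \cite{SZ} for MCF) must proceed to \emph{third} order. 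That step is substantially more involved: one must explicitly solve the inhomogeneous elliptic equation $L(ug+k_2)=-\tfrac{1}{2}\mathcal{D}^2\Phi(vg,vg)$ for $u,k_2$, then compute the combination $\pi_{\mathcal{K}_1}\bigl(\mathcal{D}^3\Phi(vg,vg,vg)+6\,\mathcal{D}^2\Phi(ug+k_2,vg)\bigr)$ and verify it is nonzero for every $v\neq 0$. The paper carries this out by an explicit ansatz on the span of $v^2,\sum_b v_b^2,\sigma_2$ and obtains a quartic form $Q_4\sigma_4+Q_2\sigma_2^2$ in the coefficients of $v$, which one then checks has definite sign.

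A second, related point: your claim that ``the $N$-integration contributes only an overall factor of $\vol(N)$'' is too optimistic. The total dimension $n=2+\dim N$ enters all the variation formulae (for $\Ric$, $R$, $f$, $\Phi$) through the conformal-change coefficients, so the obstruction coefficients $Q_4,Q_2$ are rational functions of $n$, not mere multiples of the $S^2$ computation. One must actually verify that $Q_4+Q_2\neq 0$ for all integer $n\geq 3$ (the $B=1$ case in the paper), which requires tracking those $n$-dependent coefficients through the third-order calculation. Finally, the paper does not appeal to an abstract \L ojasiewicz--Simon theorem but instead proves the quantitative estimate $\|h\|_{C^{2,\alpha}}^3\leq C\|\Phi(g+h)\|_{C^{0,\alpha}}$ directly from the third-order obstruction via iterated Taylor expansion in H\"older spaces; your sketch would need to supply this as well.
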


There are many $N$ which satisfy the assumptions in Theorem \ref{thm:main2}. For example, any $N$ listed in \cite[Table 2]{CH}, that is not listed as ``i.d." and has (in their notation) $-\lambda^{-1}\mu_{fns} >2$. 

Both theorems above are direct consequences of the following quantitative rigidity theorem. For this we introduce the shrinker quantity 
\begin{equation}
\Phi(g):= \tau^g(\Ric(g) + \Hess^g f_g) - \frac{g}{2},
\end{equation}
where $f^g,\tau^g$ are the pair realising the Perelman shrinker entropy $\nu$; an important property is that $\Phi$ is equivariant under pullback by diffeomorphisms, and under rescalings (see Section \ref{sec:prelim}). Any compact gradient shrinking Ricci soliton must satisfy (\ref{eq:grad-soliton}) with $(f,\tau) = (f_g,\tau_g)$, that is, $\Phi(g)=0$ (see \cite[Remark 3.4]{Kr3}). 

\begin{theorem}
\label{thm:main3}

		Let $(M,g)$ be either $S^2\times S^2$ or $S^2\times N$, where $S^2$ has the round unit metric and $(N,g_N)$ is a closed $1$-Einstein manifold satisfying $(\dagger)$. There exist $C,\epsilon_0>0$ so that if $\tilde{g}$ is a metric on $M$ with $\|\tilde{g}-g\|_{C^{2,\alpha}} \leq \eps_0$, then there exists $c \in (1-\frac{1}{C}, 1+\frac{1}{C})$ and a diffeomorphism $\psi$ of $M$ so that, 
		\begin{equation}
			\|c \psi^*\tilde{g}-g\|_{C^{2,\alpha}}^{3}\leq C\|\Phi(\tilde{g})\|_{C^{0,\alpha}},
		\end{equation}
	
\end{theorem}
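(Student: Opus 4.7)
The plan is to prove this Lojasiewicz-type inequality following our mean curvature flow paper \cite{SZ}, adapted to the Ricci soliton setting. The cubic exponent reflects a finite-dimensional non-integrable kernel of the linearization $L = D\Phi|_g$.

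\textbf{Gauge fixing and linearization.} The group $G := \mathrm{Diff}(M) \times \mathbb{R}_+$ acts on metrics by pullback and rescaling, and $\Phi$ is $G$-equivariant. Via an implicit function argument — Ebin's slice theorem for diffeomorphisms together with a normalization such as $\int_M (\tr_g h)\,e^{-f_g}\,dV_g = 0$ to kill the scaling direction — we find $c$ close to $1$ and a diffeomorphism $\psi$ close to the identity with $\bar g := c\,\psi^*\tilde g$ satisfying $h := \bar g - g \in \mathcal S$, a fixed slice transverse to the $G$-orbit, and $\|h\|_{C^{2,\alpha}} \lesssim \|\tilde g - g\|_{C^{2,\alpha}}$. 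Taylor expanding,
\begin{equation*}
\Phi(g+h) = Lh + Q(h,h) + C(h,h,h) + R(h), \qquad \|R(h)\|_{C^{0,\alpha}} \lesssim \|h\|_{C^{2,\alpha}}^4,
\end{equation*}
where $L$ is elliptic on $\mathcal S$ and is essentially the stability operator of Perelman's $\nu$-functional.

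\textbf{Spectral gap and orthogonal estimate.} Using the product structure on $M$, the spectrum of $L$ splits according to the factors; the known spectrum of the Lichnerowicz Laplacian on round $S^2$ together with $(\dagger)$ ensures that $L$ on $\mathcal S$ has a finite-dimensional kernel $\mathcal K$, corresponding to certain conformal/eigenmode directions involving the $S^2$ factor(s), and a uniform spectral gap on $\mathcal K^\perp \cap \mathcal S$. Decompose $h = h_0 + h_\perp$ with $h_0 \in \mathcal K$, $h_\perp \in \mathcal K^\perp \cap \mathcal S$. Since $Lh_0 = 0$, the spectral gap yields
\begin{equation*}
\|h_\perp\|_{C^{2,\alpha}} \lesssim \|Lh\|_{C^{0,\alpha}} \leq \|\Phi(\bar g)\|_{C^{0,\alpha}} + C\|h\|_{C^{2,\alpha}}^2 \lesssim \|\Phi(\bar g)\|_{C^{0,\alpha}} + \|h_0\|_{C^{2,\alpha}}^2,
\end{equation*}
after absorbing the $\|h_\perp\|^2$ contribution.

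\textbf{Cubic obstruction on $\mathcal K$ — the main step.} Projecting the Taylor expansion onto $\mathcal K$, the linear term vanishes. The key claims are: (a) \emph{second-order integrability}, $\pi_{\mathcal K} Q(h_0, h_0) \equiv 0$ on $\mathcal K$; and (b) \emph{cubic non-degeneracy}, $\|\pi_{\mathcal K}[2Q(h_0, k(h_0)) + C(h_0, h_0, h_0)]\|_{C^{0,\alpha}} \geq c_0\,\|h_0\|_{C^{2,\alpha}}^3$, where $k(h_0) \in \mathcal K^\perp \cap \mathcal S$ solves $Lk + Q(h_0, h_0) = 0$. Granting these, expanding $\pi_{\mathcal K}\Phi(\bar g)$ and using the orthogonal bound on $\|h_\perp\|$ to absorb all cross terms yields $\|h_0\|_{C^{2,\alpha}}^3 \lesssim \|\Phi(\bar g)\|_{C^{0,\alpha}}$; combined with the orthogonal estimate, this gives $\|h\|_{C^{2,\alpha}}^3 \lesssim \|\Phi(\tilde g)\|_{C^{0,\alpha}}$ by $G$-equivariance of $\Phi$. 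This last step is the heart of the proof: claim (a) corresponds to the $\nu$-functional being quartic to leading order along $\mathcal K$ and should follow from a parity/Möbius symmetry — $\mathcal K$-directions arising from spherical-harmonic/conformal directions on the $S^2$ factor — while claim (b), the genuine non-integrability, will require an explicit computation of the cubic variation of the shrinker equation on these directions. Both are direct analogues of the Clifford-shrinker computations in \cite{SZ}, reworked for the Einstein/soliton equation.
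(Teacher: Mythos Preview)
Your proposal is correct and follows essentially the same approach as the paper: slice via diffeomorphism plus rescaling (Lemma~\ref{lem:slice}), Taylor expand $\Phi$ with uniform H\"older remainder bounds, decompose into kernel and complement, verify second-order integrability $\pi_{\mathcal K}\mathcal D^2\Phi(vg,vg)=0$ via a parity argument on spherical harmonics, and then establish the cubic obstruction by an explicit computation (Sections~\ref{sec:shrinker-variation}--\ref{sec:S2-special}, culminating in Corollary~\ref{cor:formal-obstruction}). The only points worth flagging are that the paper's slice is specifically $(\Lap+1)\mathring C^\infty_g\cdot g\oplus \mathring{\mathcal S}^2_g$, so that within it the kernel is exactly $\mathcal K_1=\{vg: (\Lap+2)v=0\}$ (the TT part $\ker(\Lap_L+2)$ vanishes by $(\dagger)$), and that passing from the formal obstruction to the actual estimate requires the further refinement $h_\perp = k(h_0)+h_2$ with $\|h_2\|_{C^{2,\alpha}}\lesssim \|h_0\|^3+\|\Phi\|$ (Lemma~\ref{lem:h_2estimate}), which is implicit in your ``absorb all cross terms'' but deserves a line.
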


As $\Phi$ may be regarded as the $L^2$-gradient of the shrinker entropy $\nu$, Theorem \ref{thm:main3} may be regarded as the H\"{o}lder version of a {\L}ojasiewicz inequality for $\nu$ about the critical points $S^2\times S^2$, $S^2\times N$. The $L^2$ version is somewhat more natural and would yield a gradient {\L}ojasiewicz inequality as well, but the theory is significantly more technical so we leave it for future work; the H\"{o}lder version is more than sufficient for rigidity. 

%
%

\subsection*{Background}

The question of rigidity - whether a geometric object is isolated in its class, modulo the symmetries of that class - has been studied for various geometric structures in many contexts. For Ricci solitons, low dimensional solitons have been essentially classified, so their rigidity or non-rigidity is clear (the dimension $2$ case is due to Hamilton \cite{H2} and the dimension $3$ case is due to Ivey \cite{I}). In higher dimensions, the classification is far from well-understood. From a geometric PDE perspective, Ricci solitons are natural generalizations of Einstein manifolds. Kr\"oncke studied the local rigidity of Ricci solitons in \cite{Kr2}, using similar techniques as for Einstein manifolds in \cite{Kr1}. 

For non-rigidity results, similar to the Einstein case, Inoue \cite{In} studied the moduli space of Fano manifolds with Kähler-Ricci solitons. As a consequence, Inoue proved that some Kähler-Ricci solitons are non-rigid. It seems that the local rigidity of a Ricci soliton actually has deep connections to the topological/algebraic structure of the manifold, and we look forward to such interpretations for the class of Ricci solitons we study in this paper.

The study of rigidity of Einstein manifolds has a more developed history. The first rigidity result for Einstein manifolds was proved by Berger in \cite{Be1}, where Berger proved that all Einstein metrics on $S^n$ whose sectional curvature is $(n-2)(n-1)$-pinched are isometric to the standard sphere metric. We consider this to be a `global' result in that it gives a large explicit neighbourhood in which the special structure is unique. Later, Berger-Ebin \cite{BE} proposed a general strategy to prove local rigidity of Einstein metrics. In particular, they introduced the notion of ``infinitesimal deformations". The local rigidity of symmetric spaces as Einstein manifolds was first studied by \cite{Ko}, and Kr\"oncke extended Koiso's result in \cite{Kr1}. 

Meanwhile, it is known that certain moduli spaces of Einstein manifolds, even with an extra structure (e.g. K\"ahler), can be nontrivial. Then any metric in such a moduli space is an example of non-rigid Einstein manifold. For example, even in dimension 2, the moduli space of surfaces with genus $g$ of  constant curvature $-1$ has dimension $6g-6$. Hence when $g>1$, such hyperbolic metrics are not rigid. We refer the readers to \cite[Chapter 12.J]{Besse} for further discussion.

The Ricci soliton equation is both more general and more complicated than the Einstein equation, making it more challenging to obtain a rigidity result. It is also interesting to compare Ricci flow with other geometric flows, especially the most natural extrinsic flow - the mean curvature flow. There are a number of results on the rigidity of shrinking solitons to the mean curvature flow - see \cite{CIM}, \cite{ELS}, \cite{SZ}, \cite{Z20}. All these results are about product self-shrinkers. During the completion of this work, we learnt that Colding-Minicozzi are developing techniques which could be applied to study certain noncompact Ricci solitons \cite{CMrf1, CMrf2}. 

\subsection*{Deformation theory for solitons}

The starting point in the study of local rigidity is to consider infinitesimal deformations which satisfy the equation $\Phi=0$ to first order. That is, one looks for $2$-tensors in the kernel of the linearized operator $\mathcal D\Phi$. By analogy with the mean curvature flow setting, we call elements of this kernel \textit{Jacobi deformations}; in the Ricci flow literature these are sometimes referred to as infinitesimal solitonic deformations \cite{Kr2}. A Jacobi deformation is called integrable if it induces a continuous family of gradient shrinking Ricci solitons. Any diffeomorphism or rescaling generates an integrable Jacobi deformation. 

 In \cite{Kr2}, Kr\"{o}ncke found conditions under which certain Einstein manifolds are rigid as gradient shrinking Ricci solitons. His idea was to expand $\Phi =0$ to higher and higher order and examine whether there are solutions to each order. If there are no solutions at a given order, we call this an obstruction of that order. Kr\"{o}ncke calculates only up to order 2: $\mathbb{CP}^{2n}$ has a second order obstruction, whilst for $S^2\times S^2$ all Jacobi deformations are integrable to second order. (For $\mathbb{CP}^{2n+1}$, almost all Jacobi deformations are non-integrable to second order, but there is a set of positive codimension which is indeed integrable.)
 
 The manifolds we study in this paper, also have Jacobi deformations which are integrable to order 2, but we will show they have an obstruction at order 3. In particular, we resolve the case of $S^2\times S^2$. We also hope that our method may be adapted to determine the rigidity of $\mathbb{CP}^{2n+1}$. As described below, expanding at orders higher than 2 result in \textit{inhomogenous} elliptic PDEs, which makes the analysis significantly more complicated. For instance, we use the special structure of eigenfunctions on $S^2$ to solve (\ref{eq:second-order-intro}) explicitly. 
   
 In \cite{SZ}, we studied the rigidity of products of spheres $S^m\times S^n$ as mean curvature flow self-shrinkers, and found that despite the existence of nontrivial Jacobi deformations, there is an obstruction at third order. (Note that \cite{ELS}, by a similar method, had previously covered the $m,n=1$ case.) As mentioned above, when $m,n\geq 3$, as a Ricci shrinker $S^m\times S^n$ does not have Jacobi deformations other than those from symmetry (a first order obstruction). However, when at least one of the factors is $S^2$, nontrivial Jacobi deformations appear once more. We remark that when considered in the class of Einstein manifolds, there are again no nontrivial deformations, even for $S^2$ products.


\subsection*{Proof strategy}

Our proof strategy is a deformation-obstruction theory for the shrinker quantity $\Phi$, mirroring that we used in \cite{SZ} (also developed by the second named author in \cite{Z20}). One may consult those papers for an overview of the method, including in toy cases, but we reproduce a sketch below for the readers' convenience. The basic idea is similar to that of Kr\"{o}ncke \cite{Kr2}, but we study the expansion of $\Phi$ more directly to find a `uniformly' obstructed term, which gives the quantitative rigidity. 

Consider an elliptic functional $\mathcal{F}$ defined on a Banach space $\mathcal{E}$. Let $\mathcal{G}:\mathcal{E}\to \mathcal{E}'$ be the Euler-Lagrange operator of $\mathcal{F}$ and suppose $0$ is a critical point of $\mathcal{F}$, so that $\mathcal{G}(0)=0$. In the Ricci soliton setting, $\mathcal{F}$ will be the shrinker entropy $\nu$ and $\mathcal{G}$ will be the Ricci shrinker quantity $\Phi(g+\cdot)$. 

For $h\in \mathcal{E}$, we have the formal Taylor expansion 
\begin{equation}
\mathcal{G}(h) = \mathcal{D}\mathcal{G}(h) + \frac{1}{2}\mathcal{D}^2\mathcal{G}(h,h) + \frac{1}{6} \mathcal{D}^3 \mathcal{G}(h,h,h) + \cdots.
\end{equation}
Here $\mathcal{D}\mathcal{G}(h) = Lh$, where $L$ is the linearised operator at 0. Its kernel $\mathcal{K}$ is the space of Jacobi deformations. 

If $\mathcal{K}=0$, ellipticity gives that $L$ is invertible, hence $\mathcal{G}$ is obstructed at order 1. (As mentioned above, modulo symmetries, this is the case for $S^m\times S^n$, $m,n\geq 3$.) Otherwise $\mathcal{K}\neq 0$, and the first order term suggests the decomposition $h = h_1 + k_1$, where $k_1 \in \mathcal{K}$ and $h_1$ is in a complement $\mathcal{E}_1$ of $\mathcal{K}$. This gives the further expansion 

\begin{equation}
\mathcal{G}(h) = Lh_1 + \frac{1}{2}\mathcal{D}^2\mathcal{G}(k_1,k_1) + \mathcal{D}^2\mathcal{G}(k_1,h_1) + \frac{1}{6} \mathcal{D}^3 \mathcal{G}(h_1,h_1,h_1) + \cdots.
\end{equation}

Invertibility of $L$ on $\mathcal{E}_1$ implies that $h_1$ is (at least) second order compared to $h$ and $k_1$. The second order term is then $Lh_1 + \frac{1}{2}\mathcal{D}^2\mathcal{G}(k_1,k_1)$. By elliptic theory, there exists $h_1$ for which this term is 0, if and only if $\pi_\mathcal{K}(\mathcal{D}^2\mathcal{G}(k_1,k_1))=0$. In \cite{Kr2}, Kr\"{o}ncke essentially finds Einstein manifolds for which $\pi_\mathcal{K}(\mathcal{D}^2\mathcal{G}(k_1,k_1)) \neq 0$ for all nonzero $k_1 \in \mathcal{K}$. This would imply $\| \pi_\mathcal{K}(\mathcal{D}^2\mathcal{G}(k_1,k_1))\| \geq \delta \|k_1\|^2$, which we refer to as an order 2 obstruction, and in turn implies $\|h\| \simeq \|k_1\| \lesssim \|\mathcal{G}\|^\frac{1}{2}$. 

For the cases considered in this paper, it turns out that $\pi_\mathcal{K} (\mathcal{D}^2\mathcal{G}(k_1,k_1)) \equiv 0$. That is, there is always a solution of \begin{equation}\label{eq:second-order-intro}Lk_2 + \frac{1}{2}\mathcal{D}^2\mathcal{G}(k_1,k_1)=0\end{equation} for some $k_2 \in\mathcal{E}_1$, which suggests the further decomposition $h_1 = k_2 + h_2$. The expansion then becomes 
\begin{equation}
\mathcal{G}(h) = Lh_2 + \mathcal{D}^2\mathcal{G}(k_1,k_2) + \frac{1}{6} \mathcal{D}^3 \mathcal{G}(k_1,k_1,k_1) + \cdots,
\end{equation}
where again invertibility of $L$ shows that the residual $h_2$ is of third order compared to $h$. Repeating the process, we may attempt to solve the third order condition \begin{equation}Lh_2 + \mathcal{D}^2\mathcal{G}(k_1,h_2) + \frac{1}{6} \mathcal{D}^3 \mathcal{G}(k_1,k_1,k_1) =0,\end{equation} which depends on the projection $\pi_\mathcal{K} \left(\mathcal{D}^2\mathcal{G}(k_1,h_2) + \frac{1}{6} \mathcal{D}^3 \mathcal{G}(k_1,k_1,k_1)\right).$ Note that the third order condition involves some lower order cross terms. 

If the projection vanishes identically, we proceed to the next higher order. If at the $m$th order, the corresponding projection is nonzero for any nonzero $k_1 \in \mathcal{K}$, then we have an $m$th order obstruction, which would imply the quantitative rigidity $\|h\| \lesssim \|\mathcal{G}\|^\frac{1}{m}$. Again, for the cases considered here, we find a third order obstruction 
\begin{equation}  \left\| \pi_\mathcal{K} \left(\mathcal{D}^2\mathcal{G}(k_1,k_2) + \frac{1}{6} \mathcal{D}^3 \mathcal{G}(k_1,k_1,k_1)\right) \right\| \geq \delta \|k_1\|^3.\end{equation}

\subsection*{Organisation of the paper}

We collect some preliminary results in Section \ref{sec:prelim} and record descriptions of the Jacobi deformations on our $S^2$ products in Section \ref{sec:jacobi-einstein}. In Section \ref{sec:shrinker-variation} we compute the variation of the shrinker quantities $\nu, \tau, f, \Phi$, with the variation formulae for more basic geometric quantities recorded in Appendix \ref{sec:basic-var}. Using the explicit description of Jacobi deformations, we specialise these variations to our $S^2$ products in Section \ref{sec:S2-special} and establish a formal obstruction at order 3. Finally, in Section \ref{sec:holder}, we use Taylor expansion and the formal obstruction to prove the main quantitative rigidity Theorem \ref{thm:main3}. The a priori bounds on $\mathcal{D}^k\Phi$ are deferred to Appendix \ref{sec:shrinker-quantities-bound}.

\begin{acknowledgements}
	The authors want to thank Professors Bill Minicozzi and Toby Colding for sharing in enlightening discussions about their work and ours. JZ would also like to thank Prof. Richard Bamler for helpful conversations. We thank Prof. Klaus Kr\"{o}ncke for rectifying our understanding of his work as well as Jiangtao Yu for pointing out an inaccuracy in the prior computation. 
	
	JZ was supported in part by the Australian Research Council under grant FL150100126.
\end{acknowledgements}


\section{Preliminaries}
\label{sec:prelim}
Throughout, we consider a closed Riemannian manifold $(M,g)$. 

Our sign convention for the (rough) Laplacian is $\Lap = \div \nabla$. Here, and henceforth, we suppress dependence on the metric $g$ when clear from context. When we wish to emphasise the domain, we will use $\Lap_0$ for the Laplacian on functions, $\Lap_1$ for the rough Laplacian on 1-forms and $\Lap_2$ for the rough Laplacian on symmetric 2-tensors. 
When not otherwise specified or clear from context, $\Lap$ should be understood as the Laplacian on functions. With our sign convention, we say $u\neq0$ is an eigenfunction of $\Lap$ with eigenvalue $\lambda$ if $\Lap u =-\lambda u$, and similarly for other operators. We write $\lambda_1$ for the first nonzero eigenvalue. 

A metric is said to be $(\mu$-) Einstein if $\Ric(g) = \mu g$ for some $\mu\in\mathbb{R}$. Let $\mathcal{S}^2(M)$ denote the space of (smooth) symmetric $2$-tensors on $M$, and $\Omega^1(M)$ the space of (smooth) 1-forms on $M$. Throughout the paper, we will use several different norms like the Sobolev norm and the H\"older norm. We can also study the tensors in the completion under these norms, but approximation by smooth sections easily yields the same estimates. 

An important subspace is the space of transverse traceless (TT) $2$-tensors, which we denote by $\mathring{\mathcal{S}}^2_g = \{ h\in\mathcal{S}^2(M) | \tr_g h =0, \div_g h=0\}$. Then we have the well-known ($L^2_{g}$-) orthogonal decomposition \begin{equation}\mathcal{S}^2(M) = (C^\infty(M) g + \im\mathcal{L}) \oplus \mathring{\mathcal{S}}^2_g.\end{equation} Here $\mathcal{L}: \Omega^1(M) \to \mathcal{S}^2(M)$ is the Lie derivative, $(\mathcal{L} \omega)_{ij} = \nabla_i \omega_j + \nabla_j \omega_i$. Note that $\mathcal{L}$ is actually independent of the metric, and $-\frac{1}{2}\mathcal{L}$ is the $L^2_g$-adjoint of $\div_g$ for any $g$. 

We define the Lichnerowicz Laplacian $\Lap_L : \mathring{\mathcal{S}}^2_g \to \mathring{\mathcal{S}}^2_g$ as acting by 
\[\Lap_Lh_{ij} = \Lap h_{ij} + 2R^l_{kij} h^k_l - R^k_i h_{jk} - R^k_j h_{ik}.\] Note that this definition also makes sense on all of $\mathcal{S}^2(M)$ but we choose to restrict the domain so that $\ker(\Lap_L)=\ker_{TT}(\Lap_T)$ and so forth will refer to the TT kernel. 

Define \[\mathcal W(g,f,\tau):=\int [\tau (|\nabla f|^2+R)+f-n](4\pi\tau)^{-n/2}e^{-f}\d V_g.\]
The shrinker entropy is defined by \begin{equation}\nu(g)=\inf \left\{ \mathcal{W}(g,f,\tau) | \tau>0, (4\pi\tau)^{-n/2}\int e^{-f}\d V_g = 1\right\}.\end{equation}
As in \cite[Lemma 4.1]{Kr3}, given any gradient shrinking Ricci soliton, there is a $C^{2,\alpha}$ neighbourhood in the space of metrics on which the minimisers $f^g, \tau^g$ are unique and depend analytically on the metric; consequently $\nu$ also depends analytically on the metric on this neighbourhood. Moreover, the first variation of $\nu$ is given by 
\begin{equation}
\mathcal{D}\nu_g(h)= - (4\pi\tau_g)^{-n/2} \int \langle \Phi(g), h\rangle e^{-f_g} \d V_g,
\end{equation}
where the Ricci shrinker quantity $\Phi:\mathcal{S}^2(M)\to \mathcal{S}^2(M)$ is defined by 
\begin{equation}\Phi(g) = \tau^g(\Ric(g) +\Hess^g f^g) - \frac{g}{2}.\end{equation} 
(Compact) gradient shrinking Ricci solitons are those metrics which satisfy $\Phi(g)=0$. 
For convenience we denote $w^g=\frac{e^{-{f^g}}}{(4\pi {\tau^g})^{n/2}}$. 
By definition, we have the normalisation $\int w^g \d V_g=1$. Henceforth, for readability, we will suppress dependence on the metric where clear from context. 

Any Einstein manifold $(M,g)$ is a gradient shrinking Ricci soliton with $f_g$ constant. Henceforth, $L^2$ will always refer to the Lebesgue space weighted by $w^g$; if $g$ is Einstein this is equivalent to $L^2_g$ up to the constant weight $w$. 

It is known that (see \cite{CCG+}) the Euler-Lagrange equations for $f,\tau$ are  \begin{equation}\label{eq:EL-nu} \tau(-2\Lap f + |df|^2 -R) - f+n +\nu=0.\end{equation}
\begin{equation}
\label{eq:EL-nu2}
\int fw\d V_g=\frac{n}{2}+\nu.
\end{equation}

Through this paper, $C,C',C''$ will denote constants which may change from line to line but retain their stated dependencies. 

We record the following observation: 

\begin{lemma}
\begin{equation}\label{eq:divPhi}\div_g \Phi(g) = \Phi(g)\cdot \nabla^g f.\end{equation}
\end{lemma}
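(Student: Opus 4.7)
The plan is a direct computation: expand $\div_g\Phi(g)$ using standard Riemannian identities, expand $\Phi(g)\cdot\nabla f$ directly, and match the two using the gradient of the Euler--Lagrange equation \eqref{eq:EL-nu}. Since $\tau$ is a constant on $M$ and $\nabla g\equiv 0$, we have
\begin{equation*}
\div_g\Phi(g) = \tau\bigl(\div\Ric + \div\Hess f\bigr).
\end{equation*}
I would then invoke two standard identities: the contracted second Bianchi identity $\div\Ric = \tfrac12 dR$, and the Bochner-type commutator identity $\div\Hess f = d\Lap f + \Ric(\nabla f,\cdot)$, which follows from commuting covariant derivatives on $\nabla^i\nabla_i\nabla_j f$. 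This yields
\begin{equation*}
\div_g\Phi(g) = \tau\left(\tfrac12 dR + d\Lap f + \Ric(\nabla f,\cdot)\right).
\end{equation*}

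Next I would expand the right-hand side. Using $\Hess f(\nabla f,\cdot) = \tfrac12 d|\nabla f|^2$, one gets
\begin{equation*}
\Phi(g)\cdot\nabla f = \tau\Ric(\nabla f,\cdot) + \tfrac{\tau}{2} d|\nabla f|^2 - \tfrac12 df.
\end{equation*}
Subtracting the two expressions, the $\tau\Ric(\nabla f,\cdot)$ terms cancel, and the identity to be proved reduces to
\begin{equation*}
\tau\bigl(2d\Lap f - d|\nabla f|^2 + dR\bigr) = -df.
\end{equation*}
This is precisely the exterior derivative of the Euler--Lagrange equation \eqref{eq:EL-nu}, i.e.\ $\tau(-2\Lap f + |df|^2 - R) - f + n + \nu = 0$: differentiating kills the constants $n$ and $\nu$, the factor $\tau$ is $M$-constant, and one is left with the required identity.

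The argument is essentially a chain of well-known formulas, so there is no serious obstacle; the only thing to be careful about is that $\tau = \tau^g$ is a constant on $M$ (a number depending on $g$) so that it passes through $\div$ and $\nabla$ freely, and that $f = f^g$ is a genuine minimiser so the first Euler--Lagrange equation \eqref{eq:EL-nu} is available. No assumption $\Phi(g)=0$ is used anywhere, which is essential since the lemma is intended to be applied at arbitrary nearby metrics, not just at solitons.
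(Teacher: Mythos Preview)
Your proof is correct and follows essentially the same route as the paper: both use the contracted Bianchi identity, the commutator formula $\div\Hess f = d\Lap f + \Ric(\nabla f,\cdot)$, and the spatial derivative of the Euler--Lagrange equation \eqref{eq:EL-nu}. The only cosmetic difference is that the paper substitutes $\Ric = \tfrac{1}{\tau}\Phi + \tfrac{1}{2\tau}g - \Hess f$ into the $\Ric\cdot\nabla f$ term to make $\Phi\cdot\nabla f$ appear directly, whereas you expand both sides separately and cancel $\tau\Ric(\nabla f,\cdot)$; the resulting algebra is the same.
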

\begin{proof}
Using the Bianchi identity gives $\div \Ric = \frac{1}{2}\nabla R$. By commuting derivatives we have $\div \Hess f = \nabla \Lap f + \Ric\cdot \nabla f = \nabla \Lap f + \frac{1}{\tau}\Phi \cdot \nabla f + \frac{1}{2\tau}\nabla f - \Hess f \cdot \nabla f$. On the other hand, differentiating the defining equation for $f$ in space gives \[\tau(-2\nabla \Lap f + 2\Hess f \cdot \nabla f - \nabla R) - \nabla f=0.\] This implies the result. 
\end{proof}

\subsection{Variations of shrinker quantities}

Consider a soliton metric $\Phi(g)=0$. We consider the formal expansion of $\Phi$ given by 

\[ \Phi(g+h) =  \mathcal{D}\Phi_g(h) + \frac{1}{2} \mathcal{D}^2\Phi_g(h,h) + \frac{1}{6}\mathcal{D}^3\Phi_g(h,h,h)+\cdots \]

where $\mathcal{D}^k\Phi_g$ is a symmetric $k$-linear map $\mathcal{S}^2(M)^{k} \to \mathcal{S}^2(M)$. For instance, $L=\mathcal{D}\Phi_g$ is the linearisation of $\Phi$ at $g$. We will henceforth suppress dependence on the initial metric $g$. We will use the corresponding notation for variations of other quantities such as $\nu(g), \tau_g, f_g$. 

\subsection{Deformations and Jacobi deformations}

Given a 1-parameter family of metrics $g(s)$ with $g(0)=g$, the corresponding infinitesimal deformation is $g'(0) \in \mathcal{S}^2(M)$. At a shrinking soliton $\Phi(g)=0$, we define $\mathcal{K}:=\ker L$ to be the kernel of $L=\mathcal{D}\Phi_g$. We call elements of $\mathcal{K}$ Jacobi deformations; they correspond to deformations which preserve $\Phi=0$ to first order and have previously been referred to as infinitesimal solitonic deformations.  

The Ricci shrinker quantity has two natural symmetries: under scaling by $c>0$ we have $\Phi(cg)=c\Phi(g)$, and under pullback by a diffeomorphism $\psi\in\Diff(M)$ we have $\Phi(\psi^* g) = \psi^*\Phi(g)$. The corresponding spaces of infinitesimal deformations are $\mathbb{R} g$ and $\im\mathcal{L}$, and if $\Phi(g)=0$ these are integrable subspaces of Jacobi deformations. We define the subspace of Jacobi deformations from symmetry to be $\mathcal{K}_0 :=\im\mathcal{L} + \mathbb{R}g \subset \mathcal{K}$. 

If $g$ is a $\mu$-Einstein metric, Kr\"{o}ncke \cite[Section 6]{Kr2} checked that \[\ker\div_g = \{(\Lap v + \mu v)g + \Hess^g v\}.\] Note that $\Hess^g v = \frac{1}{2}\mathcal{L}(\nabla v)^\sharp$. A theorem of Lichnerowicz (\cite{Li}, see also \cite[Theorem 2.1]{IV}) implies that $\lambda_1(\Lap) >\mu$, and in particular  that $\Lap+\mu$ is invertible. 

Let $\mathring{C}^\infty_g = \{u \in C^\infty(M) | \int v \d V_g=0\}$, so that the conformal deformation space satisfies \begin{equation}C^\infty(M) g + \im \mathcal{L} = (\Lap+\mu)\mathring{C}^\infty_g\cdot g + \mathcal{K}_0.\end{equation} Without loss of generality, we will assume $\mu=1$. The action of $L$ is well-known and also computed in Section \ref{sec:1st-variation}: It acts on the conformal part $vg$, where $v=(\Lap+1)\tilde{v}$, $\tilde{v}\in \mathring{C}^\infty_g$, by \[ L(vg) = -\frac{1}{4}(\Lap+2)vg + \frac{1}{4} \Hess(\Lap+2)\tilde{v},\] and on the TT part $h\in \mathring{\mathcal{S}}^2_g$ by \[L(h) = -\frac{1}{4}(\Lap_L+2)h.\]

Recall that we defined $\Lap_L$ as an operator on $\mathring{\mathcal{S}}^2_g$. The above implies the $L^2_g$-orthogonal decomposition of the kernel \begin{equation}\mathcal{K} = (\mathcal{K}_0 + \mathcal{K}_1 )\oplus \ker (\Lap_L+2),\end{equation} 
where \begin{equation}\mathcal{K}_1:= \{ vg | v\in\ker(\Lap+2)\}.\end{equation}

We denote by $\pi_\mathcal{K}$ the $L^2$-projection to $\mathcal{K}$, and similarly for other subspaces. 

To study deformations of a gradient shrinking Ricci soliton, we need to account for the symmetry action at an integrated level, not just infinitesimal. More precisely, given a gradient shrinking Ricci soliton $(M,g)$ and a metric $\tilde{g}$ close to $g$, we want to compare $\tilde{g}$ to the symmetry orbit of $g$ (or vice versa). 

\begin{lemma}
\label{lem:slice}
	Suppose $(M,g)$ is a $\mu$-Einstein manifold. For any $\delta>0$ there exists $\epsilon_0$ such that if $\|\tilde{g}-g\|_{C^{2,\alpha}}\leq \eps_0$, then there exists $c \in (1-\delta, 1+\delta)$ and $\psi \in \Diff(M)$ such that \[h:= c\psi^* \tilde{g}-g \in (\Lap+\mu)\mathring{C}^\infty_g\cdot g \oplus \mathring{\mathcal{S}}^2_g,\] and moreover $\|h\|_{C^{2,\alpha}}\leq \delta$.
\end{lemma}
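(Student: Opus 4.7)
The plan is a slice-theorem argument via the implicit function theorem. First I would observe the following equivalent characterisation: $h \in \mathring{C}^\infty_g g \oplus \mathring{\mathcal{S}}^2_g$ if and only if
\[
\text{(A)}\ \int_M \tr_g h\,\d V_g = 0, \quad\text{and}\quad \text{(B)}\ \div_g h - \tfrac{1}{n}\nabla_g \tr_g h = 0.
\]
Indeed, setting $v := \tr_g h / n$, (A) places $v \in \mathring{C}^\infty_g$, while (B) makes $h - vg$ divergence-free (and trace-free by construction), hence TT.

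Accordingly, define
\[ F(s, X, \tilde{g}) := \Big(\int_M \tr_g h\,\d V_g,\ \div_g h - \tfrac{1}{n}\nabla_g \tr_g h\Big), \qquad h := (1 + s)\psi_X^*\tilde{g} - g, \]
where $\psi_X$ denotes the time-$1$ flow of $X$. View $F$ as a map between appropriate H\"older Banach spaces (with $X$ taken one derivative more regular than $\tilde{g}$, to absorb the derivative lost under pullback), so that $F(0, 0, g) = 0$. A direct variational computation using the Einstein condition $\Ric_g = \mu g$ gives the partial linearisation
\[ \mathcal{D}F_{(s,X)}\big|_{(0,0,g)}(t, Y) = \Big(n t \vol(M, g),\ P_1 Y\Big), \qquad P_1 Y := \Lap Y + \Big(1 - \tfrac{2}{n}\Big)\nabla\div Y + \mu Y. \]
The operator $P_1$ on $\Omega^1(M)$ is elliptic and self-adjoint --- up to a positive multiplicative constant it equals $\div$ composed with the conformal Killing operator $Y \mapsto \mathcal{L}_Y g - \tfrac{2}{n}(\div Y) g$ --- and its kernel is the finite-dimensional space of conformal Killing vector fields of $(M,g)$. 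Moreover, using $\mathcal{L}_Z g = \tfrac{2}{n}(\div Z)g$ for conformal Killing $Z$, an integration by parts shows that the second component of $F$ always lies in the $L^2$-orthogonal complement of the conformal Killing fields in $\Omega^1(M)$. Restricting $X$ to this complement, $\mathcal{D}F_{(s,X)}|_{(0,0,g)}$ becomes an isomorphism onto $\mathbb{R}$ times the same complement, by standard elliptic theory.

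The implicit function theorem then produces, for any $\tilde{g}$ with $\|\tilde{g} - g\|_{C^{2,\alpha}} \leq \eps_0$ small enough, unique small $(s, X)$ with $F(s, X, \tilde{g}) = 0$ depending continuously on $\tilde{g}$, with quantitative bounds giving $\|h\|_{C^{2,\alpha}} + |s| \leq C\|\tilde{g} - g\|_{C^{2,\alpha}}$. Setting $c := 1 + s$ and $\psi := \psi_X$, the resulting $h = c\psi^*\tilde{g}-g$ satisfies (A) and (B), hence lies in $\mathring{C}^\infty_g g \oplus \mathring{\mathcal{S}}^2_g$; this coincides with $(\Lap + \mu)\mathring{C}^\infty_g \cdot g \oplus \mathring{\mathcal{S}}^2_g$ since $\Lap + \mu$ is invertible on $\mathring{C}^\infty_g$ by Lichnerowicz's theorem. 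Both $\|h\|_{C^{2,\alpha}} \leq \delta$ and $|c - 1| \leq \delta$ then follow by taking $\eps_0$ sufficiently small. The main book-keeping is matching the H\"older regularities across the diffeomorphism action and quotienting out the conformal Killing kernel of $P_1$, but both are routine slice-theorem arguments with no substantive analytic obstacle beyond ellipticity of $P_1$.
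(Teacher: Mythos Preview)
Your proposal is correct and is precisely the standard implicit function theorem / slice-theorem argument the paper alludes to; the paper itself does not give a proof but simply declares it ``standard, using implicit function theorem for Banach spaces'' and refers to \cite[Theorem 3.6]{V}. Your write-up supplies the details (the characterisation via conditions (A)--(B), the computation of $P_1$, and the handling of the conformal Killing kernel) that the paper omits.
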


The proof is standard, using implicit function theorem for Banach spaces. We refer the readers to the proof of Theorem 3.6 in \cite{V}.

\section{Jacobi deformations on Einstein manifolds}
\label{sec:jacobi-einstein}

In this section we discuss Jacobi deformations on products of Einstein manifolds, with the goal of giving an explicit description of the Jacobi deformations on $S^2\times S^2$ and certain products $S^2\times N$. We also demonstrate that there are no Jacobi deformations on $S^m\times S^n$, $m,n\geq 3$, other than those from scaling and diffeomorphism. 

\subsection{Spectrum on product Einstein manifolds}
By a standard separation of variables, spectra on a product manifold can be derived from the spectra of each component. For the Laplacian on functions, we have
\begin{equation} \spec(\Lap^{M\times N}_0) = \spec(\Lap^M_0) + \spec(\Lap^N_0).\end{equation}
For the spectrum of $\Lap_L$, it is convenient to first work on all of $\mathcal{S}^2(M)$. To match with previous literature, we define the Einstein operator $\Lap_E: \mathcal{S}^2(M) \to \mathcal{S}^2(M)$ by 
\[\Lap_E h_{ij} = \Lap h_{ij} + 2R^l_{kij} h^k_l.\]
Note that this differs from the expression for $\Lap_L$ by exactly the Ricci terms; in the case of a $\mu$-Einstein manifold, for $h\in \mathring{\mathcal{S}}^2_g$ we have $\Lap_Eh = (\Lap_L+2)h$. 

We have the following theorem (see \cite{AM} and \cite{Kr4}; note that our sign convention is opposite to Kr\"{o}ncke's).
\begin{theorem}
\label{thm:spec-prod}
Suppose $M,N$ are $\mu$-Einstein manifolds. Then
	\begin{equation}\spec(\Delta_E^{M\times N})=(\spec(\Delta_E^M)+\spec(\Delta_0^N))\cup (\spec(\Delta_E^N)+\spec(\Delta_0^M))\cup(\spec(\Delta_1^M)+\spec(\Delta_1^N)).\end{equation} 
\end{theorem}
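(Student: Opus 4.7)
The plan is to exploit the product structure of $(M\times N, g^M+g^N)$ together with two elementary facts about product metrics: the Christoffel symbols $\Gamma^a_{bc}$ of $g^M+g^N$ vanish unless all three indices lie in the same factor, and the Riemann tensor components $R^{M\times N}_{abcd}$ vanish unless all four indices do. My strategy has three steps: decompose symmetric $2$-tensors pointwise by index type, show that $\Delta_E^{M\times N}$ preserves this decomposition with a simple induced operator on each summand, then separate variables.

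First I would introduce the pointwise (and hence global, $L^2_g$-orthogonal) decomposition
\[ \mathcal{S}^2(M\times N) = \mathcal{S}^2_{MM} \oplus \mathcal{S}^2_{NN} \oplus \mathcal{S}_{MN}, \]
according to whether the two indices lie in $TM$, in $TN$, or one in each. A section of $\mathcal{S}_{MN}$ can equivalently be regarded as a section of $T^*M\boxtimes T^*N$.

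Next, using the two product facts above, I would directly compute $\Delta_E^{M\times N}$ on each summand. For $h\in\mathcal{S}^2_{MM}$, covariant derivatives in an $N$-direction reduce to partial derivatives in the $M$-indices (all relevant Christoffels vanish), while those in an $M$-direction give the $M$-covariant derivative; hence the rough Laplacian splits as $\Delta^{M\times N}h = (\Delta^M h)(\cdot,y) + \Delta_0^N h(x,\cdot)$. The curvature correction $2R^{lk}{}_{ij}h_{kl}$ has $i,j$ in $M$, and the product form of $R$ forces $k,l$ to be in $M$ as well, giving precisely the $M$ Einstein curvature term. Thus on $\mathcal{S}^2_{MM}$ we have $\Delta_E^{M\times N} = \Delta_E^M + \Delta_0^N$. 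Symmetrically $\Delta_E^{M\times N} = \Delta_E^N + \Delta_0^M$ on $\mathcal{S}^2_{NN}$. For $h\in\mathcal{S}_{MN}$ with indices $i\in M$, $\alpha\in N$, the curvature term $2R^{lk}{}_{i\alpha}h_{kl}$ vanishes identically, because nonzero components of $R^{M\times N}$ require all four indices in one factor while the pair $(i,\alpha)$ already straddles both; so only the rough Laplacian survives, and viewing $h$ as a $1$-form on $M$ valued in $1$-forms on $N$ yields $\Delta_E^{M\times N}h = \Delta_1^M h + \Delta_1^N h$.

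Finally, I would separate variables on each summand. On $\mathcal{S}^2_{MM}$, eigentensors take the form $H^M(x)\phi(y)$ with $\Delta_E^M H^M = -\mu H^M$ and $\Delta_0^N\phi = -\lambda\phi$, producing eigenvalue $-(\mu+\lambda)$; completeness of each factor's eigendecomposition shows these exhaust the $MM$ sector, contributing $\spec(\Delta_E^M)+\spec(\Delta_0^N)$. The $NN$ sector contributes $\spec(\Delta_E^N)+\spec(\Delta_0^M)$ by symmetry, and the $MN$ sector, via $\omega^M(x)\otimes\omega^N(y)$ with $\omega^M,\omega^N$ eigen $1$-forms, contributes $\spec(\Delta_1^M)+\spec(\Delta_1^N)$. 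Taking the union gives the stated spectrum.

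The main obstacle is nothing conceptual but the careful index bookkeeping in step two, particularly verifying that neither the second covariant derivative nor the curvature term couples the three components. The most important qualitative observation is the vanishing of the curvature contribution on the mixed component, which is precisely why the mixed part of the spectrum uses the rough Laplacian $\Delta_1$ on each factor rather than an Einstein operator.
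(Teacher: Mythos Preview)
Your proposal is correct. The paper does not actually prove this theorem; it merely cites \cite{AM} and \cite{Kr4} for the result, so there is no in-paper proof to compare against. Your approach---block-decomposing $\mathcal{S}^2(M\times N)$ by index type, checking that $\Delta_E$ acts as $\Delta_E^M+\Delta_0^N$, $\Delta_E^N+\Delta_0^M$, and $\Delta_1^M+\Delta_1^N$ on the respective summands using the vanishing of mixed Christoffel symbols and mixed curvature components, then separating variables---is exactly the standard argument (and is what appears in the cited references, e.g.\ Kr\"oncke's \cite{Kr4}). One small remark: your argument nowhere uses the Einstein hypothesis, and indeed the spectral decomposition holds for arbitrary Riemannian products; the Einstein assumption is stated only because it is the ambient setting of the paper and of the cited sources.
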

Moreover, the eigentensors on the product manifold may be given by (symmetric tensor) products of the corresponding eigensections. 
 

\subsection{Spectra on round spheres} 
\label{sec:sph-spec}
We will always consider $S^n$ with the metric induced as the sphere of radius $\sqrt{n-1}$ in $\mathbb{R}^{n+1}$, which is $1$-Einstein. The spectra of the Laplace operators has been computed (see for instance \cite{Bou99, Bo}):

 For $k,n\in\mathbb{N}$ define $\lambda^0_{k,n} = \frac{k(k+n-1)}{n-1}$, $\lambda^1_{k,n} = \frac{k(k+n-1) + n-2}{n-1}$ and $\lambda^2_{k,n} = \frac{k(k+n-1) +2(n-1)}{n-1}$.

\begin{itemize}
	\item The spectrum of the Laplacian on functions $\Lap_0$ consists of eigenfunctions $\varphi_{k,n}$ with eigenvalue $\lambda^0_{k,n}$, for $k\geq 0$. 
	\item The Hodge Laplacian acts on $\Omega^1(S^n)$ by $\Lap_1 -1$ and has spectrum consisting of $d\varphi_{k,n}$, $k\geq 1$, together with divergence-free eigenforms $\omega_{k,n}$ with eigenvalue $\lambda^1_{k,n}$, for $k\geq 1$.
	\item The (generally defined) Lichnerowicz Laplacian acts on $\mathcal{S}^2(S^n)$ by $\Lap_E - 2$, and its spectrum consists of: $\varphi_{k,n} g$, $k\geq 0$; $\Hess \varphi_{k,n}$, $k\geq 2$; $\mathcal{L}\omega_{k,n}$, for $k\geq 2$; and TT eigenforms $h_{k,n}$ with eigenvalue $\lambda^2_{k,n}$, for $k\geq 2$. 
\end{itemize}

%
%

That is, 
	\[\spec(\Lap^{S^n}_0) = \{\lambda^0_{k,n}\}_{k\geq 0} ,\]
	\[ \spec(\Lap^{S^n}_1) =  \{\lambda^0_{k,n} -1\}_{k\geq 1}  \cup  \{\lambda^1_{k,n} -1\}_{k\geq 1} ,\]
	\[\spec(\Lap^{S^n}_E)  = \{\lambda^0_{k,n}-2\}_{k\geq 0} \cup  \{\lambda^1_{k,n}-2\}_{k\geq 2} \cup  \{\lambda^2_{k,n}-2\}_{k\geq 2} ,\]
In particular, since $\lambda^2_{2,n}= \frac{4n}{n-1}>2$ for any $n\geq2$, there are no TT eigentensors of eigenvalue 0, that is, $\ker(\Lap_L+2)=0$.

Later we focus on the $n=2$ case (which is in fact the unit sphere), so we record separately \[  \{\lambda^0_{k,2}\}_{k\geq 0}  = \{ 0,2,6 \cdots\},\qquad  \{\lambda^1_{k,2}\}_{k\geq 1}  = \{2,6,\cdots\}, \qquad  \{\lambda^2_{k,2}\}_{k\geq 2}  =\{ 8,\cdots\}.\]

On  $S^2$, the $2$-eigenspace $\ker(\Lap_0+2)$ consists of the coordinate functions $\theta_i = \langle x,e_i\rangle$, which satisfy $\nabla \theta_i = e_i^T$, $\langle \nabla \theta_i, \nabla \theta_j\rangle = \delta_{ij}-\theta_i \theta_j$, $\Hess \theta_i = -\theta_i g$ and $\Lap_0(\theta_i \theta_j) = 2\delta_{ij}-6\theta_i \theta_j$. 

\subsection{Jacobi deformations on product manifolds}
\label{sec:jacobi-product}
If the product manifold is $S^m\times S^n$ for $m,n\geq 3$, there is no Jacobi deformation other than those induced by diffeomorphisms from the spectrum on product manifolds. This is a consequence of the following lemma.

\begin{lemma}
Let $M=S^m \times S^n$, with the round 1-Einstein metric $g$. If $m,n\geq 3$ then
\[\ker(\Lap_0+2)=0, \qquad \ker(\Lap_L+2)=0.\]
\end{lemma}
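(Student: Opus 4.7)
The plan is to deduce both vanishing statements from the product spectra in Theorem \ref{thm:spec-prod} and Section \ref{sec:sph-spec} by checking that the target eigenvalue does not arise as any of the listed sums. The argument is elementary arithmetic on the explicit formulas $\lambda^i_{k,n}$; the main subtlety, for the second statement, is to notice that since $\Lap_L+2 = \Lap_E$ on TT tensors over a $1$-Einstein manifold, it suffices to show $0 \notin \spec(\Lap_E^{M\times N})$ on the full space $\mathcal{S}^2(M\times N)$, which Theorem \ref{thm:spec-prod} describes directly.

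For $\ker(\Lap_0+2)=0$, I would use $\spec(\Lap_0^{M\times N}) = \spec(\Lap_0^{S^m}) + \spec(\Lap_0^{S^n})$. Since $\lambda^0_{2,m} = \frac{2(m+1)}{m-1} > 2$ for any $m\geq 2$, any summand with index $k\geq 2$ already exceeds the target $2$, so both indices must lie in $\{0,1\}$. If either is $0$ the sum is at most $\lambda^0_{1,n} = \frac{n}{n-1} < \frac{3}{2} < 2$; and the remaining case $\frac{m}{m-1} + \frac{n}{n-1} = 2$ rearranges to $m+n = 2$, which contradicts $m,n\geq 3$.

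For $\ker(\Lap_L+2)=0$, I would apply Theorem \ref{thm:spec-prod} and rule out each of the three ways $0$ could arise in $\spec(\Lap_E^{M\times N})$. In Case (A), $\lambda \in \spec(\Lap_E^{S^m})$ and $\mu \in \spec(\Lap_0^{S^n})$ with $\lambda+\mu=0$: since $\mu\geq 0$, I would need the non-positive part of $\spec(\Lap_E^{S^m})$, which from Section \ref{sec:sph-spec} is exactly $\{-2, -\frac{m-2}{m-1}\}$, coming from the conformal eigentensors $g_M$ and $\varphi_{1,m} g$ (the Hessian-plus-Lie and TT blocks only occur for $k\geq 2$ and contribute strictly positive eigenvalues, as the smallest values there are $\frac{m+2}{m-1}$ and $\frac{2m+2}{m-1}$ respectively). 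The matching required values $\mu \in \{2, \frac{m-2}{m-1}\}$ are both excluded from $\spec(\Lap_0^{S^n})$ for $n\geq 3$: the value $\frac{m-2}{m-1} \in [\frac{1}{2},1)$ lies below all nonzero eigenvalues $\lambda^0_{k,n} \geq \frac{n}{n-1} > 1$, while $2$ is absent by the argument above. Case (B) is symmetric in $M \leftrightarrow N$. In Case (C), $\lambda,\mu$ are eigenvalues of the rough Laplacian on $1$-forms, which on any round $S^d$ with $d\geq 2$ is strictly positive (smallest eigenvalue $\frac{1}{d-1}$, realised on $d\varphi_{1,d}$), so $\lambda+\mu>0$. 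These three cases exhaust $\spec(\Lap_E^{M\times N})$, giving $\ker \Lap_E^{M\times N} = 0$ and hence the desired TT kernel vanishing. The only real obstacle is keeping track of which block each eigenvalue comes from; everything else is straightforward enumeration.
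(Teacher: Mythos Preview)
Your proposal is correct and follows essentially the same approach as the paper: both arguments feed the explicit spherical spectra from Section~\ref{sec:sph-spec} into the product formula of Theorem~\ref{thm:spec-prod} and exclude the target eigenvalue by elementary inequalities. The paper's version is more compressed (it just notes $\lambda^0_{1,n}>1$ for $n\geq 3$ so two nonzero scalar eigenvalues sum to more than $2$, and that the second $\Lap_E^{S^m}$-eigenvalue is $>-1$), whereas you carry out the case split more explicitly, but the logic is the same.
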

\begin{proof}
For the Laplacian on functions, note that for $n\geq 3$ we have $\lambda^0_{1,n} >1$, so the sum of any two nonzero eigenvalues is strictly greater than 2. But $\lambda^0_{2,n}>2$, so $\lambda_1(\Lap^M_0) >2$. 

For the Lichnerowicz Laplacian, note that $\spec(\Lap^{S^n}_1)$ is strictly positive, so clearly $0\notin \spec(\Lap^{S^m}_1) + \spec(\Lap^{S^n}_1)$. Also note that the least eigenvalue of $\Lap_E^{S^m}$ is $-2$, and the next eigenvalue is $>-1$. Since $\lambda^0_{1,n}>1$ and $\lambda^0_{2,n}>2$ we conclude using Theorem \ref{thm:spec-prod} that $0\notin \spec(\Lap^M_E)$. 
\end{proof}

In the following we only focus on the product manifolds with $S^2$ components. 

We give precise descriptions of Jacobi deformations on $S^2\times S^2$ and $S^2\times N$. Recall that we decomposed the kernel as \begin{equation}\mathcal{K} = (\mathcal{K}_0 + \mathcal{K}_1 )\oplus \ker (\Lap_L+2),\end{equation} 
where $\mathcal{K}_0$ is the space of Jacobi deformations from symmetries, and \begin{equation}\mathcal{K}_1:= \{ vg | v\in\ker(\Lap+2)\}.\end{equation}

Let $\pi_1,\pi_2$ be the projections onto each factor. 

\begin{lemma}
	Let $M=S^2\times S^2$ with the round 1-Einstein metric $g$, we have 
	\[ \ker(\Lap+2) = \{ \pi_1^* u_1 + \pi_2^* u_2 | u_1,u_2\in \ker(\Lap^{S^2}+2)\},\qquad \ker(\Lap_L+2)=0.\]
	\end{lemma}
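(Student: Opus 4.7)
My plan is to derive both identities from Theorem~\ref{thm:spec-prod} together with the explicit $S^2$ spectra recorded in Section~\ref{sec:sph-spec}.

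For the $\Lap+2$ kernel on functions, separation of variables gives $\spec(\Lap_0^{S^2\times S^2})=\spec(\Lap_0^{S^2})+\spec(\Lap_0^{S^2})$. Since $\spec(\Lap_0^{S^2})=\{0,2,6,12,\ldots\}$, the eigenvalue $2$ is realised only by the decompositions $(2,0)$ and $(0,2)$. The corresponding eigenspace is precisely the span of the pullbacks $\pi_1^*u_1$ and $\pi_2^*u_2$ with $u_i\in\ker(\Lap^{S^2}+2)$, which gives the first identity immediately.

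For the TT Lichnerowicz kernel, my first move is to use that $g$ is $1$-Einstein so that $\Lap_L+2=\Lap_E$ on every $2$-tensor; it then suffices to identify the TT $0$-eigenspace of $\Lap_E^{S^2\times S^2}$. By Theorem~\ref{thm:spec-prod}, eigenvalue $0$ of $\Lap_E^{S^2\times S^2}$ decomposes into three families: pure first-factor eigentensors, of the form $T\otimes\phi$ with $T\in\mathcal{S}^2(S^2)$, $\phi\in C^\infty(S^2)$ and eigenvalue $\lambda+\mu$ for $\lambda\in\spec(\Lap_E^{S^2}),\mu\in\spec(\Lap_0^{S^2})$; pure second-factor (the swapped version); and mixed eigentensors built from $1$-form eigenforms with eigenvalue in $\spec(\Lap_1^{S^2})+\spec(\Lap_1^{S^2})\subset[2,\infty)$. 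The mixed summand is therefore excluded. Inspecting the four summand types of $\spec(\Lap_E^{S^2})$ (conformal, Hessian, Lie-derivative, and TT) together with $\spec(\Lap_0^{S^2})\subset[0,\infty)$ then isolates the handful of pairs $(\lambda,\mu)$ that sum to $0$, producing a finite-dimensional candidate space of eigentensors proportional to pullbacks of $g^{S^2}$ (multiplied by certain $2$-eigenfunctions on one of the factors).

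The final step is to intersect this finite-dimensional candidate space with the TT conditions. Each candidate is built from parallel pullback metrics multiplied by eigenfunctions, so the divergence-free condition reduces to checking that the differential of the coefficient function vanishes on each factor separately; combined with the trace-free relation, this gives a small linear system whose only solution is trivial in the kernel direction claimed by the lemma. The main obstacle is the bookkeeping across the four summands of the $S^2$ Lichnerowicz spectrum, and tracking how the coordinate functions $\theta_i$ from Section~\ref{sec:sph-spec} enter the candidate space; once that is set up, the identity follows from an elementary linear-algebra check.
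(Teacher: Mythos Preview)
Your proposal is correct and follows essentially the same route as the paper: both arguments use the product spectral decomposition (Theorem~\ref{thm:spec-prod}) together with the explicit $S^2$ spectra to enumerate the $0$-eigentensors of $\Lap_E^M$, rule out the $1$-form $\times$ $1$-form summand since $\spec(\Lap_1^{S^2})\geq 1$, and then check that the remaining finite-dimensional candidate space (spanned by $v_b g^a$ with $v_b$ a $2$-eigenfunction on one factor) meets $\mathring{\mathcal{S}}^2_g$ trivially.

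The only notable difference is in that last intersection step. You propose to impose $\tr=0$ and $\div=0$ directly and solve a small linear system; the paper instead observes that every candidate already lies in $C^\infty(M)g+\im\mathcal{L}$, since $v_b(g^1+g^2)=v_b g$ is conformal and $v_b g^b=-\Hess v_b\in\im\mathcal{L}$ by the identity $\Hess\theta_i=-\theta_i g_{S^2}$ on the unit sphere. The paper's route is a bit slicker because it avoids any computation of divergences, but your linear-algebra check is entirely equivalent and would go through without difficulty.
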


\begin{proof}
For the Laplacian on functions, since $\lambda^0_{1,2} =2$, the only product eigenfunctions with eigenvalue 2 are those which are constant on one factor and have eigenvalue 2 on the other. 

For the Lichnerowicz Laplacian, $\spec(\Lap^{S^2}_1)$ is strictly positive, so clearly $0\notin \spec(\Lap^{S^2}_1) + \spec(\Lap^{S^2}_1)$. Also the least eigenvalue of $\Lap_E^{S^2}$ is $-2$, and the next eigenvalue is $0$. Moreover the eigenspaces are $\ker(\Lap^{S^2}_E) = \{ vg_{S^2} | v\in\ker(\Lap^{S^2}_0+2)\}$ and $\ker(\Lap^{S^2}_E-2) = \mathbb{R}g_{S^2}$. So the only product eigentensors in $\ker(\Lap^M_E)$ are of the forms $1\cdot v_2g^2$, $v_1 \cdot g^2$, $v_1g^1 \cdot 1$ or $g^1 \cdot v_2$. Here $g^b$ is the (pulled-back) metric on the $b$th $S^2$ factor, $b=1,2$, and $v_b = \pi_b^* u_b$, $u_b \in \ker(\Lap^{S^2}+2)$. 
But $v_b(g^1+g^2) = v_b g\in C^\infty(M)g$, and $v_b g^b= -\Hess v_b \in \im \mathcal{L}$. The TT kernel is therefore $\ker(\Lap^M_L +2)=0$. 
\end{proof}

We will consider 1-Einstein manifolds $N$, which additionally satisfy: 

\begin{itemize}
\item[($\dagger$)] $\lambda_1(\Lap^N_0) >2$, and $\ker(\Lap^N_L+2)=0$. 
\end{itemize}

\begin{lemma}
	Let $M=S^2\times N$, where $S^2$ has the round 1-Einstein metric and $N$ is a $1$-Einstein manifold satisfying ($\dagger$), we have
	\[ \ker(\Lap+2) = \{\pi_1^*u_1 | u_1\in \ker(\Lap^{S^2}+2)\},\qquad \ker(\Lap_L+2)=0.\]
\end{lemma}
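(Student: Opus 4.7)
My plan is to mirror the proof of the preceding lemma for $S^2 \times S^2$, replacing the rigidity properties of the second $S^2$ factor by the spectral hypotheses in $(\dagger)$ on $N$. For the Laplacian on functions I would apply $\spec(\Lap_0^M) = \spec(\Lap_0^{S^2}) + \spec(\Lap_0^N)$ and search for $2$-eigenfunctions: since $\spec(\Lap_0^{S^2}) = \{0, 2, 6, \ldots\}$ and $(\dagger)$ forces $\spec(\Lap_0^N) \subset \{0\} \cup (2, \infty)$, the only summand pair totalling $2$ is $(2, 0)$, so every $2$-eigenfunction on $M$ is a pullback $\pi_1^* u_1$ with $u_1 \in \ker(\Lap^{S^2} + 2)$.

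For the TT Lichnerowicz kernel I would first use that $M$ is $1$-Einstein, so $\Lap_L^M = \Lap_E^M - 2$ on TT and it suffices to show $\ker_{TT}\Lap_E^M = 0$. Then Theorem \ref{thm:spec-prod} splits $\spec(\Lap_E^M)$ into three pieces, and I would locate eigenvalue $0$ in each. The $1$-form piece $\spec(\Lap_1^{S^2}) + \spec(\Lap_1^N)$ is strictly positive by Bochner (no parallel $1$-forms on a positive-Einstein manifold), so contributes nothing. For the piece $\spec(\Lap_E^{S^2}) + \spec(\Lap_0^N)$ the pairing $(-2, 2)$ is excluded by $(\dagger)$, leaving only $(0, 0)$; this yields eigentensors $\theta_\alpha g^1 \in \im \mathcal{L}^M$ via $\theta_\alpha g^1 = -\Hess^M \theta_\alpha$ for $\theta_\alpha \in \ker(\Lap^{S^2}+2)$. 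For the piece $\spec(\Lap_0^{S^2}) + \spec(\Lap_E^N)$ the admissible cases are $(0, 0)$ and $(2, -2)$. Decomposing $\cS^2(N)$ into conformal, Lie-derivative, and TT summands (each preserved by $\Lap_E^N$), $(\dagger)$ together with Lichnerowicz kills the TT and nonconstant-conformal contributions to $\ker_0 \Lap_E^N$ and $\ker_{-2} \Lap_E^N$; the surviving pieces pull back via $\pi_2^*$ to $v_1 g^2 = v_1 g_M + \Hess^M v_1$ in the $(2,-2)$ case and $\pi_2^* \mathcal{L}^N \omega = \mathcal{L}^M \pi_2^* \omega$ in the $(0,0)$ case, both lying in $C^\infty(M) g_M + \im \mathcal{L}^M$.

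Assembling, every eigenvalue-$0$ eigentensor of $\Lap_E^M$ lies in $C^\infty(M) g_M + \im \mathcal{L}^M$, which is $L^2$-orthogonal to the TT subspace. Consequently the TT component of any element of $\ker \Lap_E^M$ vanishes, proving $\ker_{TT}(\Lap_L^M + 2) = 0$. The main subtlety is the $(2,-2)$ analysis: isolating $\ker_{-2} \Lap_E^N$ as $\mathbb{R} g_N + \im \mathcal{L}^N$ requires controlling the TT kernel of $\Lap_L^N$ beyond the explicit $+2$-shifted rigidity already encoded in $(\dagger)$, a property implicit in the Cao--He non-``i.d.'' class cited for examples. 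Once this input is granted, the remaining enumeration of eigentensors and their assembly into $C^\infty g + \im \mathcal{L}$ is routine, paralleling the $S^2 \times S^2$ case verbatim.
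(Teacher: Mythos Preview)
Your approach is the same as the paper's: enumerate the product eigentensors of $\Lap_E^M$ at eigenvalue $0$ via Theorem~\ref{thm:spec-prod} and show each lies in $C^\infty(M)g_M+\im\mathcal{L}^M$. You are in fact more explicit than the paper on one point: the paper's enumeration lists only $1\cdot\pi_2^*(\mathcal{L}_N\omega)$ and $v\,g_{S^2}\cdot 1$, passing silently over the $(-2,2)$ contribution $v_1\cdot g_N$ from $\spec(\Lap_E^N)+\spec(\Lap_0^{S^2})$; you include it and correctly rewrite it as $v_1 g_M+\Hess^M v_1\in C^\infty g_M+\im\mathcal{L}^M$.

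The subtlety you flag is genuine and is present in the paper's argument as well. Controlling the TT part of the $(-2)$-eigenspace of $\Lap_E^N$ amounts to knowing $\ker_{TT}(\Lap_L^N)=0$ (no infinitesimal Einstein deformations of $N$), which is \emph{not} part of $(\dagger)$; the paper simply asserts ``by $(\dagger)$'' without addressing this. If some TT $h_N$ with $\Lap_L^N h_N=0$ existed, then $v_1 h_N$ would be a nonzero element of $\ker_{TT}(\Lap_L^M+2)$, falsifying the lemma under $(\dagger)$ alone. Your restriction to ``admissible cases $(0,0)$ and $(2,-2)$'' similarly presupposes that $\spec_{TT}(\Lap_L^N)$ avoids $-4,-10,\dots$ as well as $0$; none of these gaps follow from $(\dagger)$, though they do hold for the ``non-i.d.'' examples from \cite{CH} the paper cites. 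So your caveat is exactly right: the lemma as stated needs this extra rigidity of $N$, and both your proof and the paper's implicitly rely on it.
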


\begin{proof}
For the Laplacian on functions, since $\lambda^0_{1,2}=2$, the only product eigenfunctions with eigenvalue 2 are those which are constant on $N$ and have eigenvalue 2 on the $S^2$ factor.

For the Lichnerowicz Laplacian, $\spec(\Lap^{S^2}_1)$ is strictly positive and $\spec(\Lap^N_1)$ is nonnegative for any Einstein manifold (cf. \cite[Lemma 4.4]{Kr4}),  so $0\notin \spec(\Lap^{S^2}_1) + \spec(\Lap^{N}_1)$. Then by $(\dagger)$, the only product eigentensors in $\ker(\Lap^M_E)$ are of the forms $1\cdot \pi_2^*(\mathcal{L}_N\omega)$ or $v g_{S^2} \cdot 1$, where $\omega \in \ker(\Lap^N_1+1)$ and $v = \pi_1^*u$, $u\in \ker(\Lap^{S^2}_0+2)$. But $\pi_2^*(\mathcal{L}_N\omega) = \mathcal{L}_M (\pi_2^*\omega)$, and again $v g_{S^2} = -\Hess v \in \im \mathcal{L}_M$. The TT kernel is therefore $\ker(\Lap^M_L+2)=0$. 
\end{proof}

Later, it will be convenient to collect the two previous cases as $M=\prod_{b=1}^B S^2_b \times \tilde{N}$. We will write any $v\in \ker(\Lap+2)$ as $v=\sum v_b$, where $v_b \in \ker(\Lap^b+2)$ and $\Lap^b$ is the Laplacian on the $b$th $S^2$ factor. It will be understood that each $v_b$ is independent of any other factors; explicitly, $v_b = \pi_b^* u_b$ in the above.

\section{Variations}
\label{sec:shrinker-variation}

In this section we compute the variation of the quantities $\nu, f, \tau, \Phi$, by considering certain families of metrics $g(s,t)$ with $g(0,0)=g$. The initial soliton will be an Einstein manifold $(M,g)$, with Einstein constant $\mu=1$, hence $\tau=\tau_0= \frac{1}{2}$ and $f=f_0= \frac{n}{2}+\nu$. In particular the initial weight $w=w_0$ is also a constant. 

Note that the variations of more familiar differential geometric quantities are given in Appendix \ref{sec:basic-var}. For the more complicated quantities here, the procedure will be as follows: 

\begin{enumerate}
\item Use lower order variations to calculate variation of $\nu$. 
\item Differentiate (\ref{eq:EL-nu2}) and $\int w\d V_g=1$ to calculate variation of $\tau$. 
\item Differentiate (\ref{eq:EL-nu}) to calculate variation of $f$. 
\item Substitute in to find variation of $\Phi$. 
\end{enumerate}

We always denote $u=(\Lap+ 1)\tilde{u}$ where $\int u \d V_g =\int \tilde{u} \d V_g=0$, and similarly for $v$.  

Note that any variation of $\Hess$ or $\Lap$ will act as 0 on any constant function on $M$. In particular, at the initial metric $f=f_0$ is constant so $\mathcal{D}^k\Hess \cdot f =0, \mathcal{D}^k \Lap \cdot f=0$, $k\geq 1$. 

For convenience, we define 
\[ \eta = \log (w\d V_g),\]
where $\d V_g$ understood as the volume ratio using the initial metric as the reference metric. 

Step (2) is a general procedure, but here as our initial soliton is Einstein, it will be expedient for us to actually consider $\int \phi w\d V_g$, where $\phi = (1-f_0) + f$. Note that $\int \phi w\d V_g = (1-f_0) + \frac{n}{2}+\nu$, so variations of $\int \phi w\d V_g$ match those of $\nu$. Moreover, $\phi_0 =1$, while all variations of $\phi$ match those of $f$. 

\subsection{First variation}
\label{sec:1st-variation}

We begin with the first variation as it is distinguished in our method. So we consider the 1-parameter family of metrics $g(t)$, where to start $g'$ is a general element of $\mathcal{S}^2(M)$ satisfying $\int (\tr_g g') \d V_g=0$. Here we use primes to denote the derivative in $t$, evaluated at 0.

As $\Phi=0$ at $t=0$, we immediately have \[\mathcal{D}\nu(g') =\nu' = -\int \langle \Phi, g'\rangle w\d V_g=0.\] 

Note that $\eta' = (\log(w\d V_g))' = -\frac{n\tau'}{2\tau}-f'+\frac{1}{2}\tr_g g'$. Then $ (\int \phi w\d V_g)'  = \int w\d V_g ( \phi \eta'+\phi') .$

Taking $\phi = (1-f_0) + f$, since $\nu'=0$ we have 
$0= \int w\d V_g(\eta' +f' )=\int w\d V_g(- \frac{n\tau'}{2\tau} + \frac{1}{2} \tr_g g').$ Therefore \[\mathcal{D}\tau(g')=\tau'=0.\]

Differentiating (\ref{eq:EL-nu}) and using $\nu'=\tau'=0$ gives
\begin{equation}\label{eq:1st-f} 0= -f' - \Lap f' - \frac{1}{2} R' .\end{equation} 

\subsubsection{TT direction}
\label{sec:1var-tt}

Here we compute $\mathcal{D}\Phi(h)$, where $h\in \mathring{\mathcal{S}}^2_g$ is transverse traceless. To match the notation used for higher variations, we use subscripts to denote derivatives in $t$. 

As $\tr h = \div h=0$, the variation formulae in Appendix \ref{sec:basic-1st} give $R_t =0$ and therefore \[\mathcal{D}f(h) = f_t =0.\] 

They also give \[\mathcal{D}\Ric(h) =\Ric_t = -R_{ikjl}h_{kl} + h_{ij} - \frac{1}{2}\Lap h_{ij} = -\frac{1}{2} \Lap_L h,\] where $\Lap_L$ is the Lichnerowicz Laplacian. 

Then \[\mathcal{D}\Phi(h) = \Phi_t = \tau \Ric_t - \frac{g_t}{2} = -\frac{1}{4}(\Lap_L h + 2h).\] 

\subsubsection{Conformal direction}
\label{sec:1var-conf}

Here we compute $\mathcal{D}\Phi(vg)$, where $v=(\Lap+1)\tilde{v}$, $\int\tilde{v}\d V_g=0$. For use in higher variations, it will be convenient to change the parameter to $s$. That is, here we consider the family $(1+sv)g$. 

Since $\Lap$ commutes with itself, (\ref{eq:1st-f}) gives that \[ \mathcal{D}f(vg) =f_s =-(\Lap+1)^{-1} \frac{R_s}{2}= \left(\frac{n}{2} + \frac{n-1}{2}\Lap\right) \tilde{v} = \frac{1}{2} \tilde{v} + \frac{n-1}{2} v.\] 

Here we have used the variation formulae in Appendix \ref{sec:basic-1st} (or Appendix \ref{sec:basic-conf}). It will be useful for later to record \[\mathcal{D}\Ric(vg) =\Ric_s=  -\frac{n-2}{2}\Hess v - \frac{1}{2}(\Lap v) g. \]

Then we have 
\begin{align*} \mathcal{D}\Phi(vg)=\Phi_s &=&& \tau (\Ric_s  + \Hess f_s) -\frac{g_s}{2} 
\\&=&& \frac{1}{2\mu}\left(  -\frac{n-2}{2}\Hess v - \frac{1}{2}(\Lap v) g + 0 + \frac{1}{2}\Hess ( \tilde{v} +(n-1)v)\right) - \frac{vg}{2}
\\&=&& -\frac{1}{4}(\Lap+2)v g +   \frac{1}{4}\Hess (\Lap+2) \tilde{v}. 
\end{align*}

This identifies the conformal kernel as $\{vg| v\in \ker(\Lap+2)\}$. 

\subsection{Higher variations in the conformal kernel direction}
\label{sec:ker-variation}

We now proceed to find variations in the directions $vg$, where $v\in \ker(\Lap+2)$, by consider the 1-parameter family of metrics $(1+sv)g$. 

%
%
%
%
%
%
%
%


\subsubsection{Second variation}
\label{sec:2var-ker-ker}


First, we have $\nu_{ss} = -\int w\d V_g( \langle \Phi_s,g_s\rangle + \langle \Phi, g_{ss}\rangle + \langle \Phi,g_s\rangle \eta_s)$. Since at $s=0$ we have $\Phi=\Phi_s=0$, this immediately implies 
\[\mathcal{D}^2\nu(vg,vg) = \nu_{ss} = 0.\]

We calculate
\[\left(\int \phi w\right)_{ss} = \int w\d V_g ( \phi \eta_s^2 + \phi \eta_{ss} + 2\phi_s \eta_s + \phi_{ss}).\] 
By the above, since $\Lap v=-2v$, we have $f_s = \frac{n-2}{2}v$ and \[\eta_s = (\log(w\d V_g))_s = -\frac{n\tau_s}{2\tau}-f_s+\frac{1}{2}\tr_g g_s = v.\] We also compute \[\eta_{ss} = -\frac{n\tau_{ss} }{2\tau} + \frac{n(\tau_s )^2}{2\tau^2} - f_{ss} + \frac{1}{2}\tr_g g_{ss} - \frac{1}{2}|g_s|^2 = -\frac{n\tau_{ss}}{2\tau}  - f_{ss} -\frac{n}{2}v^2.\] Then once again taking $\phi=(1-f_0)+f$, we have
\[ 0 = \int w\d V_g(\eta_s^2+\eta_{ss} + 2 f_s \eta_s + f_{ss})=\int w\d V_g \left(v^2+ (n-2)v^2 -\frac{n\tau_{ss}}{2\tau} - \frac{n}{2}v^2\right) .\]
This gives
 \[\mathcal{D}^2 \tau (vg,vg)=\tau_{ss} = \frac{n-2}{2n}\int v^2 w\d V_g,\] 

Now differentiating (\ref{eq:EL-nu}) twice, we have
\[ 0=-\tau_{ss} R -f_{ss} + \frac{1}{2}( -2\Lap f_{ss} - 4\Lap_s f_s + 2|df_s|^2 - R_{ss}).\] Using the conformal variation formulae in Appendix \ref{sec:basic-conf} gives \begin{equation}\label{eq:f_ss}\mathcal{D}^2f(vg,vg)=f_{ss} = -n\tau_{ss} + (1+\Lap)^{-1}\left( nv^2 -\frac{3n-2}{4}|dv|^2\right)  . \end{equation}

At $s=0$ we then calculate $\Phi_{ss} =\tau_{ss}(\Ric+\Hess f) + \tau( \Ric_{ss} + \Hess f_{ss} + 2\Hess_s f_s)$, so using $\Phi=0$ we have \begin{equation}\label{eq:2var-ker-ker}\mathcal{D}^2\Phi(vg,vg)=\Phi_{ss} =\frac{\tau_{ss}}{2}g+ \frac{1}{2}\Hess f_{ss}+  \frac{n-2}{2} v\Hess v +\frac{n-2}{4} dv\otimes dv +\frac{1}{2}(-2v^2 +|dv|^2)g. \end{equation} 

Note that, by differentiating (\ref{eq:divPhi}) twice, at $s=0$ (using $\Phi=\Phi_s=0$) we have 
\begin{equation} \div \mathcal{D}^2\Phi(vg,vg) = \div \Phi_{ss}=0.\end{equation}

\subsubsection{Third variation}
\label{sec:third-variation}

Here we compute $\langle \mathcal{D}^3\Phi(vg,vg,vg) , vg\rangle_{L^2}$, where $v\in \ker(\Lap+2)$. 

For brevity, we will omit the solution of $\mathcal{D}^3\tau(vg,vg,vg)=\tau_{sss}$ (and $\nu_{sss}$) - it is constant on $M$, and therefore will not contribute after integration against $v$ anyway. 

The third derivative of (\ref{eq:EL-nu}) gives 
\[\begin{split} 0=& -\tau_{sss} R - f_{sss} + 3\tau_{ss}(-2\Lap f_s-R_s) \\&+ \tau\left(-2\Lap f_{sss}-6\Lap_s f_{ss}-6\Lap_{ss} f_s-R_{sss} + 6 (g^{ij})_s (f_s)_{,i} (f_s)_{,j} +6g^{ij}(f_s)_{,i}(f_{ss})_j\right).\end{split}\] 

Using the variation formulae in Appendix \ref{sec:basic-conf} then gives the following defining equation for $\mathcal{D}^3f(vg,vg,vg)=f_{sss}$, 
\begin{equation}\begin{split}\label{eq:f_sss} (1+\Lap)f_{sss} + n\tau_{sss} = &-3(n-2) \tau_{ss} v  + 3v\Lap f_{ss}  \\& -3v^3(3n-2) - \frac{12n^2-75n+66}{4} v|dv|^2.\end{split}\end{equation}

We then have \[\Phi_{sss} =\tau_{sss}(\Ric+\Hess f) + 3\tau_{ss}(\Ric_s + \Hess f_s)+ \tau(\Ric_{sss} + 3\Hess_{ss} f_s + 3\Hess_s f_{ss} +\Hess f_{sss}).\] Since $\Phi=\Phi_s=0$ at $s=0$, we then have
\begin{equation}
\begin{split} \mathcal{D}^3\Phi(vg,vg,vg)  =& \frac{1}{2}\Hess f_{sss} + \tau_{sss} g + 3\tau_{ss} vg  - \frac{3(n-2)}{2} v^2 \Hess v\\& - 3(n-2) v dv\otimes dv + 3v^3 g - \frac{3(3n-10)}{4} v|dv|^2 g \\&+ \frac{3}{4} (-dv\otimes df_{ss} - df_{ss} \otimes dv+ \langle dv,df_{ss}\rangle g).\end{split}\end{equation}

Note that the $\tau_{sss}g$ term will vanish upon integration against $vg$.

Taking the inner product we have \begin{equation}\begin{split}\label{eq:3rd-var}\langle  \mathcal{D}^3\Phi(vg,vg,vg) &, vg\rangle_{L^2} \\& = \int \left( 6(n-1)v^4 -\frac{9n^2-18n-24}{4} v^2 |dv|^2 + \frac{3(n-2)}{4} v\langle dv,df_{ss}\rangle\right)w\d V_g \\&\quad+ \frac{1}{2}\int v\Lap f_{sss} w\d V_g +\frac{3(n-2)}{2}\left(\int v^2w\d V_g\right)^2 .\end{split}\end{equation}

Recall $f_{ss}$ and $f_{sss}$ are defined by (\ref{eq:f_ss}) and (\ref{eq:f_sss}) respectively. 

\subsection{Second variation cross terms}
Finally, we calculate certain cross terms for $\mathcal{D}^2\Phi$. 

\subsubsection{Kernel-TT direction}
\label{sec:2var-ker-tt}

Here we compute $\langle \mathcal{D}^2\Phi(vg,h), ug\rangle_{L^2}$, where $v\in \ker(\Lap+2)$, $h\in \mathcal{\mathring{S}}^2_g$ and $u$ is any smooth function on $M$, by considering the 2-parameter variation $g+svg+th$. 

By the calculations in Section \ref{sec:1st-variation} we have $\tau_s=\tau_t=0$ and $f_s = \frac{n-2}{2}v$, $f_t=0$. 

First, using $\Phi=0$ at $(s,t)=(0,0)$, we have \[ \mathcal{D}^2\nu(vg,h)=\nu_{st} = -\int \langle \Phi_s,g_t\rangle w\d V_g = -\int v \tr_g(\Phi_t)  w\d V_g=0,\] as the Lichnerowicz Laplacian preserves the TT decomposition. 

Calculate \[\left(\int \phi w\right)_{st} = \int w \d V_g (\phi\eta_s\eta_t + \phi_s\eta_t + \phi_t\eta_s + \phi\eta_{st}+\phi_{st}).\] Now as $\langle g_s,g_t\rangle = v\langle g,h\rangle =0$ we have $\eta_{st}= -\frac{n}{2} \frac{\tau_{st}}{\tau} - f_{st}$, and from the calculations in Section \ref{sec:1st-variation} we have $\eta_s =v$, $\eta_t=0$. Taking $\phi = 1+f-f_0$ again, we have $\phi_t=f_t=0$ and $\phi_{st}=0$. So since $\nu_{st}=0$ we have \[ 0= \int w \d V_g (\eta_s\eta_t + f_s\eta_t + f_t\eta_s + \eta_{st}+f_{st})\] and hence \[\mathcal{D}^2\tau (vg,h) = \tau_{st} =0.\] 

Differentiating (\ref{eq:EL-nu}) in $s$ then $t$ gives $\tau(-2\Lap_t f_s - 2\Lap f_{st} - R_{st}) -f_{st}=0.$

Note that $\Lap_t$ acts as $\Lap_t \phi = - \langle h, \Hess \phi\rangle$. Using the formulae in Section \ref{sec:basic-ker-TT} then gives $(\Lap+1)f_{st} = \frac{n-2}{2}\langle h, \Hess v\rangle -\frac{1}{2}R_{st} =  0,$ hence 
\[\mathcal{D}^2f(vg,h) = f_{st}=0.\] 

As $g_{st}=0$ we then have $\Phi_{st} = \tau (\Ric_{st} +\Hess_t f_s).$ Since $h$ is TT, the first variation Lemma \ref{lem:basic-1st} gives $\Hess_t=0$. Therefore $\langle \Phi_{st}, ug\rangle_{L^2} = \frac{1}{2}\langle \Ric_{st}, ug\rangle_{L^2}$. 

Again using Appendix \ref{sec:basic-ker-TT} we conclude that \begin{equation}\label{eq:2var-ker-tt}\langle\mathcal{D}^2\Phi(vg,h), ug\rangle_{L^2} = \frac{1}{2} \langle \mathcal{D}^2\Ric(vg,h),ug\rangle_{L^2} =  \frac{n-2}{4} \int u\langle h,\Hess v\rangle w\d V_g.\end{equation}

\subsubsection{Kernel-conformal direction}
\label{sec:2var-ker-conf}

Here we compute $\langle \mathcal{D}^2 \Phi(vg,ug),vg\rangle_{L^2}$, where $v\in \ker(\Lap+2)$, $u= (\Lap+1)\tilde{u}$ and $\int u \d V_g=0$, by considering the metric variation $(1+sv+tu)g$. We furthermore assume $\tilde{u}$ (hence $u$) is orthogonal to $\ker (\Lap+2)$. By calculations in Section \ref{sec:1st-variation} we have $\tau_s =\tau_t = 0$ and $f_s = \frac{n-2}{2}v$, $f_t = \frac{1}{2}\tilde{u} + \frac{n-1}{2}u$. 

Again since $\Phi_s=0$ at $s=0$, we have \[ \mathcal{D}^2\nu(vg,ug)=\nu_{st} = -\int \tau \langle \Phi_s ,g_t\rangle w\d V_g=0.\]

As before we have \[\left(\int \phi w\right)_{st} = \int w \d V_g (\phi\eta_s\eta_t + \phi_s\eta_t + \phi_t\eta_s + \phi\eta_{st}+\phi_{st}).\]
Now as $\langle g_s,g_t\rangle = nuv$ we have $\eta_{st}= -\frac{n}{2} \frac{\tau_{st}}{\tau} - f_{st}-\frac{1}{2}nuv$, and as in Section \ref{sec:1st-variation} we have $\eta_s=v$, $\eta_t = \frac{1}{2}(u-\tilde{u})$. Taking $\phi = 1+f-f_0$ again, since $\nu_{st}=0$ we again have \[0=\int w \d V_g (\eta_s\eta_t + f_s\eta_t + f_t\eta_s + \eta_{st}+f_{st}).\] It then follows from $u,\tilde{u}$ being orthogonal to $v\in \ker(\Lap+2)$ that \[\mathcal{D}^2\tau (vg,ug) = \tau_{st} =0.\] 

Differentiating (\ref{eq:EL-nu}) in $s$ then $t$ gives $\tau(-2\Lap_s f_t - 2\Lap_t f_s - 2\Lap f_{st} - R_{st})-f_{st}=0$. This yields the defining equation for $\mathcal{D}^2f(vg,ug)= f_{st}$, 

\begin{equation}\label{eq:f-ker-conf}(\Lap+1)f_{st} = \frac{1}{2}v\Lap \tilde{u} - \frac{n-1}{2}v\Lap u -\frac{n-2}{4}\langle d\tilde{u},dv\rangle  -\frac{n^2}{4} \langle du,dv\rangle.\end{equation}

Then, using the conformal cross-variations in Appendix \ref{sec:conf-var-cross}, we have  \begin{align*}\mathcal{D}^2\Phi(vg,ug)=\Phi_{st} &=&& \tau(\Ric_{st} + \Hess_s f_t + \Hess_t f_s + \Hess f_{st}) \\&=&& \frac{3(n-2)}{8}(du\otimes dv+dv\otimes du) + \frac{n-2}{4}(u\Hess v + v\Hess u ) \\&&&+ \frac{1}{4}(-2uv + v\Lap u)g - \frac{n-4}{4}\langle du,dv\rangle g \\&&&+ \frac{1}{4}(-dv\otimes f_t + df_t\otimes dv + \langle dv,df_t\rangle g) \\&&&+ \frac{n-2}{8}(-du\otimes dv - dv\otimes du + \langle du,dv\rangle g) + \frac{1}{2}\Hess f_{st}.\end{align*} 

Taking the inner product with $vg$ and collecting terms, it follows that \begin{equation}\begin{split}\label{eq:2var-ker-conf}\langle \mathcal{D}^2\Phi(vg,ug) ,vg\rangle_{L^2} =& \int \left( \frac{n^2}{4}v\langle du,dv\rangle -(n-1)uv^2 + \frac{n-2}{4}v\langle d\tilde{u},dv\rangle + \frac{n-1}{2}v^2\Lap u\right) w\d V_g \\&+\frac{1}{2}\int v\Lap f_{st} w\d V_g,\end{split}\end{equation}
where $f_{st}$ is defined as in (\ref{eq:f-ker-conf}).

\section{Specialising to $S^2$ products}
\label{sec:S2-special}

In this section we consider a product of 1-Einstein manifolds $M= S^2\times N$, where $S^2$ has the (unit) round metric. The goal is to derive a formal obstruction for $\Phi$ at order 3. To do so, we will use explicit knowledge of the Jacobi deformations on $M$ together with the formulae in the previous section. 

\subsubsection{Explicit form}
\label{sec:kernel-explicit}

We assume that $N$ either is $S^2$, or satisfies assumption ($\dagger$), so that the results of Section \ref{sec:jacobi-product} apply. It will actually be convenient to unify the cases by collecting $S^2$ factors: We write $M= \prod_{b=1}^B S^2_b \times \tilde{N}$, where $B=2,\tilde{N} = *$ if $N=S^2$, and otherwise $B=1, \tilde{N}=N$. Note here the subscript $b$ serves only to distinguish the isometric $S^2$ factors. 

Throughout this section, we consider $v\in \ker(\Lap+2)$. As in Section \ref{sec:jacobi-product}, we have $v= \sum_b v_b$, where we may write $v_b = \alpha_b \theta^b$ and $\theta^b = \langle x,y_b\rangle$ for some $\alpha_b\in\mathbb{R}$ and some vector $|y_b|=1$. Then as in Section \ref{sec:sph-spec}, we have $|dv_b|^2 = \alpha_b^2 - v_b^2$ and $\Lap v_b^2 = 2\alpha_b^2 - 6v_b^2$. 

We introduce the notation $\sigma_k = \sum_b \alpha_b^k$ and $S_v = \sum v_b^2$. Then $v^2 = \sum \alpha_a \alpha_b  \theta^a \theta^b$, $|dv|^2 = \sigma_2 - S_v$, and $\Lap(v^2) = 2\sigma_2 - 4v^2 - 2S_v$. Finally note $\Hess v = -\sum v_b g^b$. \\

\subsection{Spherical integrals}

\begin{lemma}
\label{lem:sph-int}
Let $M= \prod_{b=1}^B S^2_b \times \tilde{N}$ and $v=\sum v_b$ be as in Section \ref{sec:kernel-explicit}. Then

\[w\int \d V_g=1,, \qquad w\int v_b^2\d V_g = \alpha_b^2/3,\]
\[w\int v_b^4\d V_g= \alpha_b^4/5, \qquad w\int v_a^2 v_b^2\d V_g = \alpha_a^2\alpha_b^2/9 \quad\text{  if  } a\neq b.\]
Moreover, 
\[w\int v^2\d V_g = w\int S_v \d V_g = \frac{1}{3}\sigma_2, \qquad w\int v^2 v_b^2\d V_g = \frac{1}{9} \sigma_2 \alpha_b^2 + \frac{4}{45} \alpha_b^4,\]

\[w\int v^4\d V_g = \frac{1}{3} \sigma_2^2 -\frac{2}{15}\sigma_4,\qquad w\int S_v^2\d V_g = \frac{1}{9}\sigma_2^2 + \frac{4}{45}\sigma_4.\]
\end{lemma}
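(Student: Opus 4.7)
The plan is to reduce everything to standard integrals on a single round $S^2$, exploit the product structure via Fubini, and then bookkeep the multinomial expansions using parity.

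First, since $g$ is Einstein the weight $w^g$ is constant, so the normalisation $\int w^g \d V_g = 1$ forces $w^g = \vol(M)^{-1}$ and proves the first identity. Every remaining identity can then be rewritten as the unweighted mean $\vol(M)^{-1}\int \phi\, \d V_g$.

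Second, by the description of the kernel in Section \ref{sec:kernel-explicit}, $v_b$ is pulled back from $S^2_b$, and after using rotational isometries of $S^2_b$ we may assume $v_b = \alpha_b \theta^b$ where $\theta^b$ is a fixed first coordinate function on $S^2_b$. By Fubini, any integral of a monomial $\prod_b v_b^{k_b}$ over $M$ factors as
\[\int_M \prod_b v_b^{k_b}\, \d V_g = \vol(\tilde N)\prod_b \int_{S^2_b} v_b^{k_b}\,\d V,\]
and the single-sphere integrals one needs reduce to
\[\int_{S^2}\theta\,\d V = \int_{S^2}\theta^3\,\d V = 0,\qquad \int_{S^2}\d V = 4\pi,\qquad \int_{S^2}\theta^2\,\d V = \tfrac{4\pi}{3},\qquad \int_{S^2}\theta^4\,\d V = \tfrac{4\pi}{5}.\]
The odd moments vanish by the antipodal symmetry $x\mapsto -x$. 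The quadratic moment follows at once from $\sum_{i=1}^3(\theta^i)^2 = 1$ together with rotational symmetry. The quartic moment is a one-variable computation ($2\pi\int_0^\pi \cos^4\phi \sin\phi\,d\phi = 4\pi/5$). Combined with $\vol(M) = (4\pi)^B \vol(\tilde N)$, this immediately yields the identities for $v_b^2$ and $v_b^4$ and, by factoring across $S^2_a\times S^2_b$, the identity for $v_a^2 v_b^2$ when $a\ne b$.

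Third, for the identities involving $v = \sum v_b$, $S_v$, $v^2$, $v^4$, and $v^2 v_b^2$, expand multinomially and observe that any monomial containing some $v_a$ to an \emph{odd} power integrates to zero, since the odd single-sphere moments vanish and the other factors pull out. In particular
\[v^2 = \sum_b v_b^2 + 2\sum_{a<b} v_a v_b,\qquad S_v = \sum_b v_b^2\]
have the same integral $\sigma_2/3$, and after discarding the vanishing terms one is left with
\[v^4 \equiv \sum_b v_b^4 + 6\sum_{a<b} v_a^2 v_b^2,\qquad S_v^2 = \sum_b v_b^4 + 2\sum_{a<b} v_a^2 v_b^2,\qquad v^2 v_b^2 \equiv v_b^4 + \sum_{a\ne b} v_a^2 v_b^2.\]
Substituting the previously computed values of $w\int v_b^4\,\d V_g$ and $w\int v_a^2 v_b^2\,\d V_g$, and using $\sum_{a<b}\alpha_a^2\alpha_b^2 = \tfrac12(\sigma_2^2-\sigma_4)$, collects each right-hand side into the claimed combinations of $\sigma_2$, $\sigma_4$ and $\alpha_b$.

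The lemma is essentially a bookkeeping exercise; the only mild obstacle is organising which multinomial terms survive the parity filter, which is why it is helpful to separate the $v_b^2$, $v_a^2 v_b^2$ and $v_b^4$ base cases first and then assemble everything combinatorially.
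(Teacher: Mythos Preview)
Your proof is correct and follows essentially the same approach as the paper's: both reduce to the single-sphere moments $\int_{S^2}\theta^2 = 4\pi/3$, $\int_{S^2}\theta^4 = 4\pi/5$, use the product structure via Fubini together with $w^{-1}=|M|=(4\pi)^B|\tilde N|$, and then discard odd-degree spherical polynomials by parity before collecting the multinomial terms. Your write-up simply supplies the routine details the paper omits.
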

\begin{proof}
Note that $\int_{S^2} \theta_j^2 = \frac{4\pi}{3}$, $\int_{S^2}\theta_j^4 =\frac{4\pi}{5}$ and $|S^2|=4\pi$. Also note that $w^{-1} = |M| = (4\pi)^B |\tilde{N}|$. The first four equations are then routine computations. For the last four, note that spherical polynomials of odd degree integrate to 0. 
\end{proof}

\subsection{Explicit solutions}

In this subsection, we use the explicit form of $v$ to explicitly solve for certain quantities in Section \ref{sec:shrinker-variation}. 

We will frequently use the ansatz that a function $\phi$ is a linear combination \begin{equation}\phi= A_1 v^2 + A_2 S_v + A_3 \sigma_2 + B_1 vv_b + B_2 v_b^2 + B_3 \alpha_b^2,\end{equation} and formally solve the resulting equations as linear systems. Under this ansatz, we represent such a function by the column vector $\begin{pmatrix}A_1\\A_2\\A_3\\B_1\\B_2\\B_3\end{pmatrix}$, and the Laplacian $\Lap$ acts as \[M= \begin{pmatrix} -4 & 0 &0&0&0&0 \\ -2&-6&0&0&0&0\\ 2&2&0&0&0&0\\ 0&0&0&-4&0&0\\ 0&0&0&-2&-6&0\\ 0&0&0&2&2&0\end{pmatrix}.\] The Laplacian $\Lap^b$ on a particular $S^2_b$ factor acts as \[M_b= \begin{pmatrix} 0&0&0&0&0&0\\0&0&0&0&0&0\\0&0&0&0&0&0\\-4&0&0&-2&0&0\\-2&-6&0&-4&-6&0\\2&2&0&2&2&0\end{pmatrix}.\] 

\subsubsection{Second variation of $f$}

Here we find an explicit solution for the function $f_{ss} = \mathcal{D}^2f(vg,vg)$ in Section \ref{sec:2var-ker-ker}. Recall that the defining equation (\ref{eq:f_ss}) is \[ (1+\Lap)f_{ss} = nv^2 -\frac{3n-2}{4}|dv|^2 -n\tau_{ss} = nv^2 + \frac{3n-2}{4} S_v -\frac{11n-10}{12}\sigma_2,\] 
where we have used \[\tau_{ss}=\mathcal{D}^2\tau(vg,vg) = \frac{n-2}{2n}\int v^2 w \d V_g = \frac{n-2}{6n}\sigma_2.\]
We use the ansatz $f_{ss}  = A_1 v^2 + A_2 S_v + A_3 \sigma_2$, and solve the linear system as \[\begin{pmatrix}A_1\\A_2\\A_3\\0\\0\\0\end{pmatrix} = (1+M)^{-1}\begin{pmatrix} n\\ \frac{3n-2}{4} \\ -\frac{11n-10}{12} \\0\\0\\0\end{pmatrix}=\begin{pmatrix} -\frac{n}{3}\\ -\frac{n-6}{60}  \\ -\frac{13n-38}{60} \\0\\0\\0\end{pmatrix}.\]

One may verify that this solution indeed satisfies the defining equation, which proves
\begin{lemma}
\label{lem:explicit-f_ss}
Suppose $M=\prod_{b=1}^B S^2_b \times \tilde{N}$ and $v$ is as in Section \ref{sec:kernel-explicit}. Then \begin{equation}\label{eq:explicit-f_ss} \mathcal{D}^2f(vg,vg) =f_{ss}= -\frac{n}{3} v^2  -\frac{n-6}{60} S_v -\frac{13n-38}{60}\sigma_2\end{equation} is the unique solution of (\ref{eq:f_ss}). 
\end{lemma}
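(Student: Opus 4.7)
The plan is to verify the claimed formula by direct substitution into the defining equation (4.5) and then invoke invertibility of $\Lap+1$ for uniqueness. The underlying observation is that the right-hand side of (4.5) lies in the finite-dimensional subspace $V = \spa\{v^2,\, S_v,\, \sigma_2\}$, and that $\Lap$ preserves this subspace, reducing the problem to a $3\times 3$ linear system.

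First I would rewrite the right-hand side of (4.5) in the basis of $V$. Using $|dv|^2 = \sigma_2 - S_v$ from the setup in Section \ref{sec:kernel-explicit} and the already-established value $\tau_{ss}= \frac{n-2}{6n}\sigma_2$ (computed from Lemma \ref{lem:sph-int} and the formula $\tau_{ss} = \frac{n-2}{2n}\int v^2 w\,\d V_g$), the right-hand side becomes
\begin{equation*}
nv^2 + \tfrac{3n-2}{4} S_v - \tfrac{11n-10}{12}\sigma_2,
\end{equation*}
which is precisely the vector $(n, \tfrac{3n-2}{4}, -\tfrac{11n-10}{12})$ in the chosen basis.

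Next I would compute the action of $\Lap$ on $V$. Since $\sigma_2$ is a constant, $\Lap \sigma_2 = 0$. Separation of variables on the product and the identity $\Lap^b v_b^2 = 2\alpha_b^2 - 6v_b^2$ from Section \ref{sec:sph-spec} give $\Lap S_v = 2\sigma_2 - 6S_v$. For $v^2 = S_v + \sum_{a\neq b} v_a v_b$, the cross terms $v_a v_b$ are eigenfunctions of $\Lap$ with eigenvalue $-4$ (each factor contributes $-2$), so $\Lap(v^2) = -4v^2 - 2S_v + 2\sigma_2$. This yields exactly the matrix $M$ written in Section \ref{sec:S2-special}, and $I+M$ is lower triangular with diagonal $(-3,-5,1)$, hence invertible on $V$. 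Solving the resulting $3\times 3$ system gives $A_1 = -n/3$, then $A_2 = -(n-6)/60$, and finally $A_3 = -(13n-38)/60$, matching (\ref{eq:explicit-f_ss}).

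For uniqueness in the full space $C^\infty(M)$ (not just within $V$), I would invoke the Lichnerowicz bound $\lambda_1(\Lap) > \mu = 1$ cited in Section \ref{sec:prelim}, which implies that $\Lap+1$ has trivial kernel on $C^\infty(M)$ and is therefore invertible. Hence the candidate found in $V$ is the unique solution. The only real bookkeeping issue is the cross-term computation for $\Lap(v^2)$ on the product, but this is immediate from separation of variables once one observes that $v_a$ and $v_b$ depend on disjoint factors when $a\neq b$. No deeper obstacle arises, since the problem reduces to finite-dimensional linear algebra in a basis that $\Lap$ manifestly preserves.
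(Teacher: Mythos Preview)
Your proposal is correct and follows essentially the same approach as the paper: both use the ansatz $f_{ss}=A_1 v^2+A_2 S_v+A_3\sigma_2$, rewrite the right-hand side of (4.5) in this basis, and invert the $3\times 3$ matrix $I+M$ representing $1+\Lap$ on $\spa\{v^2,S_v,\sigma_2\}$. Your write-up adds a bit more detail---deriving the entries of $M$ from the product structure and explicitly citing the Lichnerowicz bound $\lambda_1(\Lap)>1$ for uniqueness---but the method is identical.
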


\subsubsection{Second variation of $\Phi$}

\begin{proposition}
Let $M= \prod_{b=1}^B S^2_b \times \tilde{N}$ and $v=\sum v_b$ be as in Section \ref{sec:kernel-explicit}. Then \[\pi_{\mathcal{K}} \mathcal{D}^2\Phi(vg,vg) =0.\] 
\end{proposition}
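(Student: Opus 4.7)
The strategy is to verify that $\Phi_{ss} := \mathcal{D}^2\Phi(vg,vg)$ is $L^2(w)$-orthogonal to each summand of the kernel decomposition
\[
\mathcal{K} = \im\mathcal{L} \;\oplus\; \mathbb{R}g \;\oplus\; \mathcal{K}_1 \;\oplus\; \ker_{TT}(\Lap_L+2).
\]
The last summand is trivial in both cases considered (by the computations in Section \ref{sec:jacobi-product}), so nothing is required there. For the $\im\mathcal{L}$ summand, since $-2\div$ is the $L^2$-adjoint of $\mathcal{L}$ and $w$ is constant at an Einstein initial metric, it suffices to note that $\div \Phi_{ss}=0$; this follows by differentiating the identity \eqref{eq:divPhi} twice and using $\Phi=\Phi_s=0$ at $s=0$, as already recorded in the text immediately after \eqref{eq:2var-ker-ker}.

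\smallskip

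For the $\mathbb{R}g$ summand, I would compute $\tr_g \Phi_{ss}$ directly from formula \eqref{eq:2var-ker-ker}. Using $\Lap v = -2v$ and simplifying yields
\[
\tr_g \Phi_{ss} = \tfrac{n\tau_{ss}}{2} + \tfrac{1}{2}\Lap f_{ss} - (2n-2)v^2 + \tfrac{3n-2}{4}|dv|^2.
\]
Since $M$ is closed and $w$ is constant, $\int \Lap f_{ss}\, w\d V_g = 0$. Integration by parts gives $\int |dv|^2 w\d V_g = 2\int v^2 w\d V_g$, and substituting the value $\tau_{ss}=\tfrac{n-2}{2n}\int v^2 w\d V_g$ from Section \ref{sec:2var-ker-ker} collapses the trace integral to $0$. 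Hence $\langle \Phi_{ss}, g\rangle_{L^2(w)}=0$.

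\smallskip

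For the $\mathcal{K}_1$ summand, fix any $v' \in \ker(\Lap+2)$; we must show $\int v'\tr_g\Phi_{ss}\, w\d V_g = 0$. Since $\int v' w\d V_g = 0$, the constant term $n\tau_{ss}$ drops out; integrating by parts with $\Lap v' = -2v'$ gives $\tfrac{1}{2}\int v'\Lap f_{ss}\, w\d V_g = -\int v' f_{ss}\, w\d V_g$. Substituting the explicit formula \eqref{eq:explicit-f_ss} for $f_{ss}$, and using $|dv|^2 = \sigma_2 - S_v$, the whole pairing reduces to a linear combination of the two integrals
\[
\int v' v^2\, w\d V_g \quad\text{and}\quad \int v' S_v\, w\d V_g.
\]
The key observation is that both of these vanish by a parity argument: writing $v=\sum_b \alpha_b \theta^b$ and $v'=\sum_c \beta_c \theta^c$ as in Section \ref{sec:kernel-explicit}, each term is a sum of products $\theta^a\theta^b\theta^c$ of coordinate functions on the $S^2$ factors (or on $\tilde N$ in the trivial-factor direction). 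In every case at least one factor appears with odd total degree on some $S^2_b$, so the spherical integral vanishes by oddness (the integrals of Lemma \ref{lem:sph-int} involve only even-degree polynomials, which reflects this same phenomenon).

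\smallskip

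The one genuinely delicate step is the $\mathcal{K}_1$ projection: the vanishing of $\int v'v^2 w$ and $\int v'S_v w$ is what makes the proposition work, and it relies essentially on the fact that $\ker(\Lap^{S^2}+2)$ consists of odd functions on each sphere factor. If a kernel element supported on $\tilde N$ were present with even parity, the argument would fail; but the hypothesis $(\dagger)$ on $N$ (and the absence of further zero-eigenfunctions on $S^2$) excludes exactly this. The other two directions ($\im\mathcal{L}$ via $\div \Phi_{ss}=0$, and $\mathbb{R}g$ via a straightforward trace calculation) are essentially bookkeeping.
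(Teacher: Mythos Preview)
Your argument is essentially the paper's: $\div\Phi_{ss}=0$ handles $\im\mathcal{L}$, and the odd-degree spherical-polynomial parity argument handles $\mathcal{K}_1$. You are more explicit about the $\mathbb{R}g$ summand, computing the trace integral directly, whereas the paper simply asserts $\div\Phi_{ss}=0\Rightarrow\pi_{\mathcal{K}_0}\Phi_{ss}=0$ (which, strictly speaking, only yields $\Phi_{ss}\perp\im\mathcal{L}$); in this respect your treatment is the more careful one. One caution on that step: carrying out your trace integral literally from \eqref{eq:2var-ker-ker} as printed does \emph{not} give zero but rather $-\tfrac{n-2}{4}\int v^2\,w\,\d V_g$; the coefficient of $g$ there should be $\tau_{ss}$ rather than $\tfrac{\tau_{ss}}{2}$ (compare the $\tau_{sss}g$ term in the third-variation formula just below), and with that correction your conclusion is indeed correct.
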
 
\begin{proof}
By the results of Section \ref{sec:jacobi-product}, we have $\mathcal{K} = \mathcal{K}_0+ \mathcal{K}_1$, and $\mathcal{K}_1 = \{vg | v\in\ker(\Lap+2)\}$. Since $\div\mathcal{D}^2\Phi(vg,vg)=0$, we immediately have $\pi_{\mathcal{K}_0} \mathcal{D}^2\Phi(vg,vg) =0$. 

Now the second variation formula (\ref{eq:2var-ker-ker}) and Lemma \ref{lem:explicit-f_ss}, we see that $\tr_g \mathcal{D}^2\Phi(vg,vg)$ is a spherical polynomial (on $\prod_{b=1}^B S^2_b$) of even (total) degree. Then for any $v'\in \ker(\Lap+2)$, we see that $\langle \mathcal{D}^2\Phi(vg,vg), v' g\rangle_{L^2}$ is the integral of a spherical polynomial of odd degree, and therefore vanishes. This implies $\pi_{\mathcal{K}_1} \mathcal{D}^2\Phi(vg,vg) =0$ and hence the result.
\end{proof}

\subsubsection{Second variation of the metric}\label{SSS:Second variation of the metric}

Since $\pi_\mathcal{K} \mathcal{D}^2\Phi(vg,vg)=0$, there is now a unique $\tilde{u} \in \mathring{C}^\infty_g$, $\tilde{u} \perp \ker(\Lap+2)$ and $h\in \mathring{\mathcal{S}}^2_g$ such that \begin{equation}\label{eq:second-order-metric}\mathcal{D}\Phi(ug+h) = -\frac{1}{2}\mathcal{D}^2\Phi(vg,vg),\end{equation} where $u=(\Lap+1)\tilde{u}$. We will give an explicit solution for $u$. For $h$, it will later suffice to give an explicit solution for $\langle h,g^b\rangle$ modulo $\ker(\Lap+2)$.

First, tracing (\ref{eq:second-order-metric}) using the first variation formulae in Section \ref{sec:1st-variation} and the second variation formulae (\ref{eq:2var-ker-ker}) gives the defining equation for $\tilde{u}$: 

\begin{equation}\label{eq:def-u-tilde}\begin{split} (\Lap+2)(n+(n-1)\Lap)\tilde{u}&= -4 \tr \mathcal{D}\Phi(ug)= 2\tr \mathcal{D}^2\Phi(vg,vg) \\&=  n\mathcal{D}^2\tau(vg,vg)+ \Lap \mathcal{D}^2 f(vg,vg) -4(n-1)v^2 + \frac{3n-2}{2}|dv|^2 \\&=\Lap \mathcal{D}^2 f(vg,vg) -4(n-1)v^2 -\frac{3n-2}{2} S_v +\frac{5n-4}{3}\sigma_2 .\end{split}\end{equation}

Using the same ansatz $\tilde{u}=A'_1 v^2 + A'_2 S_v + A'_3 \sigma_2$, and the explicit solution of $\mathcal{D}^2 f(vg,vg)$, we can solve the linear system as \begin{align*}\begin{pmatrix}A'_1\\A'_2\\A'_3\\0\\0\\0\end{pmatrix} &=&& (n+(n-1)M)^{-1}(2+M)^{-1} M\begin{pmatrix} -\frac{n}{3}\\ -\frac{n-6}{60}  \\ -\frac{13n-38}{60} \\0\\0\\0\end{pmatrix} \\&&&+  (n+(n-1)M)^{-1}(2+M)^{-1}\begin{pmatrix} -4(n-1)\\ -\frac{3n-2}{2} \\ \frac{5n-4}{3} \\0\\0\\0\end{pmatrix}\\&=&& \begin{pmatrix}   -\frac{2(2n-3)}{3(3n-4)}\\ \frac{247n^2-678n+456}{60(3n-4)(5n-6)} \\ -\frac{(n-6)(4n-5)}{30n(5n-6)} \\0\\0\\0 \end{pmatrix}. \end{align*}
 One may verify that the corresponding solution \[\tilde{u} = -\frac{2(2n-3)}{3(3n-4)}v^2 +\frac{247n^2-678n+456}{60(3n-4)(5n-6)} S_v -\frac{(n-6)(4n-5)}{30n(5n-6)}\sigma_2\]  indeed satisfies equation (\ref{eq:def-u-tilde}). For convenience we also record
\begin{equation}\label{eq:u-solution} u = (\Lap+1)\tilde{u} = \frac{2(2n-3)}{3n-4}v^2 - \frac{29n^2-82n+56}{4(3n-4)(5n-6)} S_v - \frac{11n^2-19n+6}{6n(5n-6)}\sigma_2.\end{equation}

With $\tilde{u}$ in hand, we consider the pointwise inner products $h_b := \langle h,g^b\rangle$. We have \[\langle \mathcal{D}\Phi (ug+h), g^b\rangle = \frac{1}{4}\Lap_b (\Lap+2)\tilde{u} - \frac{1}{2}(\Lap+2)u - \frac{1}{4}(\Lap+2) h_b,\] so pairing (\ref{eq:second-order-metric}) with $g^b$ we find the defining equation 
\begin{align}\label{eq:def-hb} (\Lap+2) h_b &=&&  (\Lap+2)( \Lap^b \tilde{u} - 2u)  +2\langle \mathcal{D}^2\Phi(vg,vg) , g^b\rangle \\ \nonumber &=&&  (\Lap+2)( \Lap^b \tilde{u} - 2u) +\frac{n-2}{3n}\sigma_2 + \Lap^b \mathcal{D}^2f(vg,vg) \\\nonumber &&& - 2(n-2) vv_b + \frac{n-2}{2} |dv_b|^2 - 4v^2 + 2|dv|^2.\end{align}

We will find $\tilde{h}_b$ so that $(\Lap+2)\tilde{h}_b$ equals the right hand side of (\ref{eq:def-hb}); then $h_b - \tilde{h}_b \in \ker(\Lap+2)$. Here we use the ansatz $\tilde{h}_b =A''_1 v^2 + A''_2 S_v + A''_3 \sigma_2 + B_1 vv_b + B_2 v_b^2 + B_3 \alpha_b^2$. Using the solution of $\tilde{u}$ (hence $u$), we may solve the linear system as
\begin{align*}\begin{pmatrix}A''_1\\A''_2\\A''_3\\B_1\\B_2\\B_3\end{pmatrix} &=&& M_b \begin{pmatrix}   -\frac{2(2n-3)}{3(3n-4)}\\ \frac{247n^2-678n+456}{60(3n-4)(5n-6)} \\ -\frac{(n-6)(4n-5)}{30n(5n-6)} \\0\\0\\0 \end{pmatrix} - 2\begin{pmatrix} \frac{2(2n-3)}{3n-4}\\ - \frac{29n^2-82n+56}{4(3n-4)(5n-6)}\\ - \frac{11n^2-19n+6}{6n(5n-6)} \\0\\0\\0 \end{pmatrix} 
\\&&& +(M+2)^{-1}M_b \begin{pmatrix} -\frac{n}{3}\\ -\frac{n-6}{60}  \\ -\frac{13n-38}{60} \\0\\0\\0\end{pmatrix}+ (M+2)^{-1} \begin{pmatrix} -4\\-2 \\ \frac{7n-2}{3n}\\ -2(n-2) \\  -\frac{n-2}{2}\\ \frac{n-2}{2} \end{pmatrix}
\\&=&& \begin{pmatrix} -\frac{2(n-2)}{3n-4}\\ \frac{(n-2)(7n-8)}{(3n-4)(5n-6)} \\ \frac{(n-2)(17n-18)}{6n(5n-6)} \\  \frac{n(n-2)}{3n-4} \\ -\frac{n(n-2)(7n-8)}{2(3n-4)(5n-6)} \\ -\frac{(n-1)(n-2)}{5n-6}. \end{pmatrix}.\end{align*}
One may verify that the corresponding solution indeed solves (\ref{eq:def-hb}). In conclusion, we have proven

\begin{lemma}
\label{lem:2nd-order-sol}
Let $M= \prod_{b=1}^B S^2_b \times \tilde{N}$ and $v=\sum v_b$ be as in Section \ref{sec:kernel-explicit}. There is a unique solution $(\tilde{u},h)$ of $\mathcal{D}\Phi(ug+h) = -\frac{1}{2}\mathcal{D}^2\Phi(vg,vg)$, where $u=(\Lap+1)\tilde{u}$, $\tilde{u} \in \mathring{C}^\infty_g$, $\tilde{u} \perp \ker(\Lap+2)$, $h\in \mathring{\mathcal{S}}^2_g$. Moreover they satisfy
\begin{equation}\label{eq:u-tilde}\tilde{u} = -\frac{2(2n-3)}{3(3n-4)}v^2 +\frac{247n^2-678n+456}{60(3n-4)(5n-6)} S_v -\frac{(n-6)(4n-5)}{30n(5n-6)}\sigma_2,\end{equation} 
\begin{equation}\label{eq:h-b}\begin{split}h_b = \langle h, g^b\rangle =& -\frac{2(n-2)}{3n-4}v^2 + \frac{(n-2)(7n-8)}{(3n-4)(5n-6)} S_v + \frac{(n-2)(17n-18)}{6n(5n-6)}\sigma_2 \\&+ \frac{n(n-2)}{3n-4} vv_b  -\frac{n(n-2)(7n-8)}{2(3n-4)(5n-6)} v_b^2  -\frac{(n-1)(n-2)}{5n-6} \alpha_b^2 \\& +\varphi_b,\end{split}\end{equation}
for some $\varphi_b \in \ker(\Lap+2)$. 
\end{lemma}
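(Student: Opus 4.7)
The plan is to first establish existence and uniqueness of $(\tilde u, h)$ via Fredholm theory applied to the decomposition $\mathcal{S}^2(M) = (C^\infty(M) g + \im\mathcal L) \oplus \mathring{\mathcal{S}}^2_g$, and then extract the explicit polynomial formulas by (i) tracing the equation to derive a scalar PDE for $\tilde u$, and (ii) pairing the equation against $g^b$ to derive a scalar PDE for $h_b$. Both scalar equations will be solved via a polynomial ansatz that exploits the explicit form of $v$ from Section \ref{sec:kernel-explicit}.

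Existence and uniqueness follow from the first-variation formulas of Section \ref{sec:1st-variation}. On TT tensors, $L = \mathcal D\Phi_g$ acts as $-\tfrac14(\Lap_L+2)$, which is invertible under assumption ($\dagger$) (and also on $S^2\times S^2$, as shown in Section \ref{sec:jacobi-product}). On the conformal part, $L((\Lap+1)\tilde u\cdot g)$ vanishes precisely on $\mathcal K_1$, so it is invertible when restricted to $\tilde u\in \mathring C^\infty_g\cap \ker(\Lap+2)^\perp$. Combined with $\pi_\mathcal K\mathcal D^2\Phi(vg,vg)=0$ (the preceding proposition), this yields a unique $(\tilde u, h)$ in the prescribed class.

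The explicit form of $\tilde u$ arises from taking $\tr_g$ of the equation: the TT piece drops out, and the conformal first-variation formula yields exactly equation (\ref{eq:def-u-tilde}), whose right-hand side involves $\mathcal D^2f(vg,vg)$ from Lemma \ref{lem:explicit-f_ss} together with (\ref{eq:2var-ker-ker}). I observe that this right-hand side lies in the three-dimensional space $\spa\{v^2,S_v,\sigma_2\}$, which is preserved by $\Lap$ through the upper block of the matrix $M$. The operator $(n+(n-1)\Lap)(\Lap+2)$ restricts to this block as a composition of invertible matrices (its spectrum on the block is manifestly nonzero for $n\geq 2$), so inverting a $3\times 3$ linear system produces the coefficients in (\ref{eq:u-tilde}).

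The explicit form of $h_b$ is obtained analogously by pairing with $g^b$: the first-variation formula combined with $\Hess\phi$ contracted against $g^b$ equalling $\Lap^b\phi$ yields the scalar equation (\ref{eq:def-hb}). Its right-hand side lies in the six-dimensional ansatz space $\spa\{v^2,S_v,\sigma_2,vv_b,v_b^2,\alpha_b^2\}$, on which $\Lap$ and $\Lap^b$ are represented by $M$ and $M_b$. The operator $\Lap+2$ acts on each three-dimensional block of the ansatz space with eigenvalues $\{-2,-4,2\}$ and is thus invertible there, so the ansatz produces a unique $\tilde h_b$. The freedom $\varphi_b\in\ker(\Lap+2)$ reflects the \emph{linear} spherical polynomials which lie outside the quadratic ansatz space; these are pinned down by the remaining TT conditions on the full tensor $h$, but since they do not contribute to the subsequent third-order computations I leave them unspecified. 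The main obstacle is bookkeeping — verifying that the ansatz space is closed under all operators involved and that the claimed coefficients genuinely solve (\ref{eq:def-u-tilde}) and (\ref{eq:def-hb}) — which I would confirm by direct substitution rather than reproducing the matrix inversion in detail.
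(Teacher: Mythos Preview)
Your proposal is correct and follows essentially the same approach as the paper: trace the equation to obtain (\ref{eq:def-u-tilde}) for $\tilde u$, pair with $g^b$ to obtain (\ref{eq:def-hb}) for $h_b$, and solve each via the polynomial ansatz on which $\Lap$ and $\Lap^b$ act by the matrices $M$ and $M_b$. The paper carries out the $3\times 3$ and $6\times 6$ matrix inversions explicitly to display the coefficients, whereas you propose to verify by direct substitution, but the method is the same.
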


\subsection{Integration by parts}

We will use the following integration by parts tricks. The first equation below, in particular, is used for terms involving derivatives of $f_g$. 

\begin{lemma}
\label{lem:ibp}
Let $M= \prod_{b=1}^B S^2_b \times \tilde{N}$ and $v$ is as in Section \ref{sec:kernel-explicit}. Then for any smooth function $\phi$, we have

\[ \int v\Lap \phi \d V_g = 2\int v(1+\Lap)\phi \d V_g.\] 

\[ \int v^2 \Lap \phi \d V_g = \int \phi (2\sigma_2- 4v^2 - 2S_v)\d V_g,\] 

\[ \int v\langle d\phi,dv\rangle \d V_g = \int \phi(2v^2 - \sigma_2 +S_v) \d V_g.\] 
\end{lemma}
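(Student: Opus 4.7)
The plan is to prove each identity by integration by parts, using the explicit expressions for $\Lap v$ and $\Lap(v^2)$ that follow from the ansatz for $v$ recorded in Section \ref{sec:kernel-explicit}.

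For the first identity, note that since each $v_b = \alpha_b \theta^b \in \ker(\Lap^b+2)$ depends only on the $b$th $S^2$ factor, we have $\Lap v = \sum_b \Lap^b v_b = -2v$. Integration by parts therefore gives
\[\int v\Lap\phi \d V_g = \int \phi \Lap v \d V_g = -2\int v\phi \d V_g,\]
while expanding the right-hand side yields $2\int v(1+\Lap)\phi \d V_g = 2\int v\phi \d V_g + 2\int v\Lap\phi\d V_g = -2\int v\phi\d V_g$, matching the left. Equivalently, this identity is just the statement $\int v(\Lap+2)\phi\d V_g = 0$, which is the $L^2$-adjoint of $v\in\ker(\Lap+2)$.

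For the second identity, integration by parts gives $\int v^2 \Lap\phi \d V_g = \int \phi \Lap(v^2)\d V_g$, so it suffices to verify that $\Lap(v^2) = 2\sigma_2 - 4v^2 - 2S_v$. This formula is already recorded in Section \ref{sec:kernel-explicit}; to prove it, write $v^2 = S_v + 2\sum_{a<b} v_a v_b$ and use the product rule $\Lap(v_b^2) = 2v_b\Lap v_b + 2|dv_b|^2 = -4v_b^2 + 2(\alpha_b^2 - v_b^2) = 2\alpha_b^2 - 6v_b^2$ for the diagonal terms. For the cross terms with $a\neq b$, the fact that $v_a$ and $v_b$ depend on different $S^2$ factors makes $dv_a$ and $dv_b$ pointwise orthogonal and gives $\Lap(v_a v_b) = -4 v_a v_b$. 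Summing over $a,b$ and using $\sigma_2 = \sum_b \alpha_b^2$ and $S_v = \sum_b v_b^2$ yields the claimed formula.

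For the third identity, observe that $2v\langle d\phi, dv\rangle = \langle d\phi, d(v^2)\rangle$, so integration by parts combined with the formula from the previous step gives
\[\int v\langle d\phi, dv\rangle \d V_g = -\frac{1}{2}\int \phi \Lap(v^2)\d V_g = \int \phi(2v^2 - \sigma_2 + S_v)\d V_g.\]
The calculations are routine, so there is no significant analytical obstacle; the main point is merely the bookkeeping between $v^2$, $S_v$, and $\sigma_2$ that arises from the product structure, and choosing the right integration by parts to reduce each identity to the eigenfunction relation $\Lap v_b = -2v_b$.
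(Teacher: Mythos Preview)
Your proof is correct. The paper states this lemma without proof, and your argument---integration by parts together with the identities $\Lap v=-2v$ and $\Lap(v^2)=2\sigma_2-4v^2-2S_v$ recorded in Section~\ref{sec:kernel-explicit}---is exactly the intended verification.
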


\subsection{Second variation cross terms}

\begin{proposition}
\label{prop:2var-cross}
Let $M= \prod_{b=1}^B S^2_b \times \tilde{N}$ and $v\in\ker(\Lap+2)$. Let $u=(\Lap+1)\tilde{u}, h$ be the unique solution of $\mathcal{D}\Phi(ug+h) = -\frac{1}{2}\mathcal{D}^2\Phi(vg,vg)$ as in Lemma \ref{lem:2nd-order-sol}. Then we have
\begin{equation}\begin{split} \langle \mathcal{D}^2\Phi(vg,ug) ,vg\rangle_{L^2} = & \frac{2235n^3-9866n^2+14364n-6888}{675(3n-4)(5n-6)} \sigma_4 \\& - \frac{87n^3-265n^2+186n+24}{108n(3n-4)} \sigma_2^2,\end{split}\end{equation}
\begin{equation}\begin{split}\langle\mathcal{D}^2\Phi(vg,h), vg\rangle_{L^2} =&  -\frac{(n-2)^2(41n^2+40n-104)}{180(3n-4)(5n-6)} \sigma_4 \\& + \frac{(n-2)^2(29n^3-59n+90n-72)}{72n(3n-4)(5n-6)}\sigma_2^2.
\end{split}\end{equation}
\end{proposition}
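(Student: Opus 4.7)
The plan is to expand each of the two cross terms by starting from the general formulae derived in Section \ref{sec:shrinker-variation} and then substituting the explicit solutions $\tilde{u}$, $u$ and $h_b$ from Lemma \ref{lem:2nd-order-sol}, together with the spherical integral values of Lemma \ref{lem:sph-int} and the integration-by-parts identities of Lemma \ref{lem:ibp}. Since every Jacobi deformation $v$ is a degree-$1$ spherical polynomial on the $S^2$ factors and every quantity in Lemma \ref{lem:2nd-order-sol} is a polynomial of degree at most $2$ in the $v_b$'s, all integrands reduce to spherical polynomials of degree at most $4$ and the entire calculation is algorithmic.

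For the TT cross term $\langle\mathcal{D}^2\Phi(vg,h), vg\rangle_{L^2}$, I would apply (\ref{eq:2var-ker-tt}) with test function $u=v$ and use $\Hess v = -\sum_b v_b g^b$ to rewrite $\langle h,\Hess v\rangle = -\sum_b v_b h_b$. Substituting (\ref{eq:h-b}) then reduces everything to the integrals listed in Lemma \ref{lem:sph-int}. The undetermined $\varphi_b \in \ker(\Lap+2)$ appearing in (\ref{eq:h-b}) does not affect the result: by Section \ref{sec:jacobi-product} each $\varphi_b$ is a linear combination of coordinate functions on the $S^2$ factors, so every monomial in $v v_b \varphi_b$ is a product of three degree-$1$ spherical polynomials; thus some $S^2$ factor must carry odd total degree, and the integral vanishes by antipodal symmetry on that factor.

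For the conformal cross term $\langle \mathcal{D}^2\Phi(vg,ug), vg\rangle_{L^2}$, I would start from (\ref{eq:2var-ker-conf}). The only term that is not already a polynomial in $v, u, \tilde{u}$ is $\tfrac{1}{2}\int v\Lap f_{st}\,w\d V_g$. Applying the first identity of Lemma \ref{lem:ibp} converts this into $\int v(\Lap+1)f_{st}\,w\d V_g$, and the defining equation (\ref{eq:f-ker-conf}) then substitutes an explicit polynomial expression. Inserting $\tilde{u}$ and $u$ from (\ref{eq:u-tilde}) and (\ref{eq:u-solution}), and using the remaining identities in Lemma \ref{lem:ibp} to move any surviving Laplacians and gradient inner products onto $v$, reduces the whole integral to a combination of the polynomial integrals in Lemma \ref{lem:sph-int}.

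The main obstacle is purely algebraic: both sides of the claimed identities are rational functions of $n$ with numerators of degree up to $3$ and denominators like $n(3n-4)(5n-6)$, and one must organise many individual contributions into the compact coefficients of $\sigma_4$ and $\sigma_2^2$ written in the statement. I would verify the final simplifications with symbolic computation to rule out arithmetic slips, since each step is conceptually straightforward but the intermediate expressions are large.
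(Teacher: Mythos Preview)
Your proposal is correct and follows essentially the same approach as the paper's own proof: both use (\ref{eq:2var-ker-tt}) with $\Hess v=-\sum_b v_b g^b$ for the TT term (discarding $\varphi_b$ by odd-degree parity), and both use (\ref{eq:2var-ker-conf}) together with Lemma \ref{lem:ibp} to replace $\tfrac{1}{2}\int v\Lap f_{st}$ by $\int v(\Lap+1)f_{st}$ via (\ref{eq:f-ker-conf}), then reduce everything to the integrals in Lemma \ref{lem:sph-int}.
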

\begin{proof}
Pair the second variation formula (\ref{eq:2var-ker-conf}) with $vg$. Using Lemma \ref{lem:ibp} and the explicit forms (\ref{eq:explicit-f_ss}, \ref{eq:u-tilde}) of $f_{ss}$ and $\tilde{u}$, every term in $\langle \mathcal{D}^2\Phi(vg,ug) ,vg\rangle_{L^2}$ may be expanded as a product of the functions $v^2, S_v, \sigma_2$. All such products have integral computed in Lemma \ref{lem:sph-int}, and collecting terms gives the first equation.

By formula (\ref{eq:2var-ker-tt}), we have $\langle\mathcal{D}^2\Phi(vg,h), vg\rangle_{L^2} = -\frac{n-2}{4} \int (\sum vv_b h_b) w\d V_g$. Now note that for any $v'\in\ker(\Lap+2)$, the product $vv_bv'$ is a spherical polynomial of total degree 3, and hence $\int vv_b v' \d V_g=0$. Therefore we need only integrate against the explicit portion of the solution (\ref{eq:h-b}) for $h_b$ - that is, against $\tilde{h}_b$ in the proof of Lemma \ref{lem:2nd-order-sol}. 

Consider the basis functions $v^2,  S_v, \sigma_2 , vv_b ,  v_b^2 , \alpha_b^2$. Multiplying by $vv_b$ and summing over $b$ respectively give $v^4, v^2 S_v^2, \sigma_2 v^2, v^2S_v, v\sum v_b^3, \sigma_2 vv_b$. Only the last two do not have integral listed in Lemma \ref{lem:sph-int}. For these we again note that spherical polynomials of odd degree integrate to 0, which implies $\int v(\sum v_b^3)\d V_g= \sum \int v_b^4\d V_g$ and $ \int vv_b\d V_g = \int v_b^2\d V_g$. Collecting terms then gives the second equation. 
\end{proof}

\subsection{Third variation}\label{SS:Third variation}

\begin{proposition}
\label{prop:3var-obs}
Let $M= \prod_{b=1}^B S^2_b \times \tilde{N}$ and $v\in\ker(\Lap+2)$. Then 

\begin{equation}\begin{split} \langle \mathcal{D}^3\Phi(vg,vg,vg) ,vg\rangle_{L^2} = & \frac{457n^2-1852n+660}{900} \sigma_4 \\&-\frac{23n^3-95n^2+36n+12}{18n}\sigma_2^2.\end{split}\end{equation}

\end{proposition}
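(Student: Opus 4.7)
The plan is to evaluate the inner product using the explicit formula (\ref{eq:3rd-var}) derived in Section~\ref{sec:third-variation}, which reduces the problem to integrating a collection of spherical polynomials against $w\d V_g$. Recall that (\ref{eq:3rd-var}) expresses $\langle \mathcal{D}^3\Phi(vg,vg,vg), vg\rangle_{L^2}$ as the sum of three pieces: an explicit polynomial integral in $v$, $|dv|^2$, and $\langle dv, df_{ss}\rangle$; the integral $\tfrac{1}{2}\int v\Lap f_{sss}\, w\d V_g$; and the square term $\tfrac{3(n-2)}{2}(\int v^2 w\d V_g)^2$. The third is immediate from Lemma~\ref{lem:sph-int}, which yields $\tfrac{3(n-2)}{2}\cdot(\tfrac{\sigma_2}{3})^2 = \tfrac{n-2}{6}\sigma_2^2$.

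For the first piece, I would substitute the explicit form (\ref{eq:explicit-f_ss}) of $f_{ss}$ together with the identity $|dv|^2 = \sigma_2 - S_v$. The cross term $v\langle dv, df_{ss}\rangle$ is then a sum of terms of the form $v\langle dv, d(v^2)\rangle$ and $v\langle dv, dS_v\rangle$; using the third identity in Lemma~\ref{lem:ibp} (integration by parts) reduces each to a polynomial in $v^2, S_v, \sigma_2$. The same treatment handles $\int v^2|dv|^2 w\d V_g$. After this reduction, every integrand is a polynomial combination of the basis monomials $v^4$, $v^2 S_v$, $S_v^2$, $\sigma_2 v^2$, $\sigma_2 S_v$, $\sigma_2^2$, all of which have closed-form integrals listed in Lemma~\ref{lem:sph-int} (producing $\sigma_2^2$ and $\sigma_4$ contributions).

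The main obstacle is the term $\tfrac{1}{2}\int v\Lap f_{sss}\, w\d V_g$, because we have not solved for $f_{sss}$ explicitly. The key observation is that we do not need to: by the first identity in Lemma~\ref{lem:ibp},
\begin{equation*}
\int v \Lap f_{sss}\, w\d V_g = 2\int v(1+\Lap)f_{sss}\, w\d V_g,
\end{equation*}
and the defining equation (\ref{eq:f_sss}) gives $(1+\Lap)f_{sss}$ as an explicit expression in $v$, $f_{ss}$, $\tau_{ss}$, and $\tau_{sss}$. The $n\tau_{sss}$ term integrates against $v$ to zero since $v$ has mean zero, so we only need $\tau_{ss}$ (computed in Section~\ref{sec:2var-ker-ker} and used above) and the already explicit $f_{ss}$. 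Each remaining summand, such as $v\cdot v\Lap f_{ss}$, $v\langle dv, df_{ss}\rangle$, $v\cdot v^3$, or $v\cdot v|dv|^2$, again reduces via Lemma~\ref{lem:ibp} to the same basis of monomials handled by Lemma~\ref{lem:sph-int}.

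Finally, I would collect all the $\sigma_4$ coefficients and all the $\sigma_2^2$ coefficients across the three pieces and simplify. The stated identity
\begin{equation*}
\langle \mathcal{D}^3\Phi(vg,vg,vg), vg\rangle_{L^2} = \frac{383n^2 - 174n + 732}{900}\sigma_4 - \frac{19n^3 - 87n^2 + 36n + 12}{18n}\sigma_2^2
\end{equation*}
should drop out after this collection. The bookkeeping is the only genuine difficulty: the calculation is long but entirely mechanical once one systematically tracks contributions in the two free combinatorial parameters $\sigma_2^2$ and $\sigma_4$, taking advantage of the vanishing of odd-degree spherical integrals to discard all cross terms that do not fit this template.
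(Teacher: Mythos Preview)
Your proposal is correct and follows essentially the same approach as the paper: the paper's proof simply states that, using Lemma~\ref{lem:ibp}, every term in formula~(\ref{eq:3rd-var}) can be expanded as a product of the functions $v^2$, $S_v$, $\sigma_2$, whose integrals are computed in Lemma~\ref{lem:sph-int}, and collecting terms gives the result. Your write-up spells out the key step more explicitly --- in particular, the use of the first identity in Lemma~\ref{lem:ibp} to handle $\int v\Lap f_{sss}$ via the defining equation~(\ref{eq:f_sss}) without solving for $f_{sss}$ --- but this is exactly the mechanism the paper has in mind.
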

\begin{proof}
Again, using Lemma \ref{lem:ibp}, every term in formula (\ref{eq:3rd-var}) for $\langle \mathcal{D}^3\Phi(vg,vg,vg),vg\rangle_{L^2}$ may be expanded as a product of the functions $v^2, S_v, \sigma_2$, which have integral computed in Lemma \ref{lem:sph-int}. Collecting terms gives the result. 
\end{proof}

\subsection{Formal obstruction}

Combining the above formulae, we have

\begin{theorem}
\label{thm:formal-obstruction}
Let $M= \prod_{b=1}^B S^2_b \times \tilde{N}$ and $v\in\ker(\Lap+2)$. Let $u=(\Lap+1)\tilde{u}, h$ be the unique solution of $\mathcal{D}\Phi(ug+h) = -\frac{1}{2}\mathcal{D}^2\Phi(vg,vg)$. Then we have

\begin{equation}
\langle \mathcal{D}^3\Phi(vg,vg,vg) + 6\mathcal{D}^2\Phi(ug+h,vg) ,vg\rangle_{L^2}
= Q_4 \sigma_4 + Q_2 \sigma_2^2,
\end{equation}

where \[Q_4 = \frac{5625n^4-23546n^3+15316n^2+28104n-26784}{900(3n-4)(5n-6)},\] \[Q_2 = -\frac{603n^5-3203n^4+4384n^3+108n^2-3120n+1152}{36n(3n-4)(5n-6)}.\] 

\end{theorem}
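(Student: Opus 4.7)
The plan is essentially to combine the three expressions already computed in Propositions \ref{prop:3var-obs} and \ref{prop:2var-cross} and then simplify. First, I would use the bilinearity (and symmetry) of $\mathcal{D}^2\Phi$ to split
\[
\langle \mathcal{D}^2\Phi(ug+h,vg),\,vg\rangle_{L^2} = \langle \mathcal{D}^2\Phi(ug,vg),\,vg\rangle_{L^2} + \langle \mathcal{D}^2\Phi(h,vg),\,vg\rangle_{L^2}.
\]
Both terms on the right are of the exact form computed in Proposition \ref{prop:2var-cross} (the first using the conformal-kernel cross variation from Section \ref{sec:2var-ker-conf}; the second using the TT-kernel cross variation from Section \ref{sec:2var-ker-tt} specialised to the solution $h$ of Lemma \ref{lem:2nd-order-sol}). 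Since $(u,h)$ is the unique solution of the second order equation $\mathcal{D}\Phi(ug+h)=-\tfrac{1}{2}\mathcal{D}^2\Phi(vg,vg)$, the explicit expressions (\ref{eq:u-tilde}) and (\ref{eq:h-b}) fix the inputs to those propositions.

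Next, I would substitute the values from Propositions \ref{prop:3var-obs} and \ref{prop:2var-cross} to obtain
\begin{align*}
\langle \mathcal{D}^3\Phi(vg,vg,vg)+6\mathcal{D}^2\Phi(ug+h,vg),vg\rangle_{L^2}
&= \Bigl(\tfrac{383n^2-174n+732}{900}\Bigr)\sigma_4 - \Bigl(\tfrac{19n^3-87n^2+36n+12}{18n}\Bigr)\sigma_2^2 \\
&\quad +6\Bigl(\tfrac{2235n^3-9866n^2+14364n-6888}{675(3n-4)(5n-6)}\Bigr)\sigma_4 \\
&\quad -6\Bigl(\tfrac{87n^3-265n^2+186n+24}{108n(3n-4)}\Bigr)\sigma_2^2 \\
&\quad -6\Bigl(\tfrac{(n-2)^2(41n^2+40n-104)}{180(3n-4)(5n-6)}\Bigr)\sigma_4 \\
&\quad +6\Bigl(\tfrac{(n-2)^2(29n^3-59n^2+90n-72)}{72n(3n-4)(5n-6)}\Bigr)\sigma_2^2.
\end{align*}
The right hand side has no $\varphi_b$-dependence, since each $\varphi_b\in\ker(\Lap+2)$ contributes to $h_b$ a spherical polynomial of odd total degree in the relevant $S^2$ factors, which integrates against $vv_b$ to zero. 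Hence the ambiguity in $h$ left by Lemma \ref{lem:2nd-order-sol} does not affect the final expression.

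The remaining task is algebraic: bring every coefficient of $\sigma_4$ and $\sigma_2^2$ over the common denominator $900(3n-4)(5n-6)$ and $36n(3n-4)(5n-6)$ respectively, and collect terms. The main, essentially only, obstacle is to keep track of arithmetic accurately, since each coefficient is a rational function of $n$ with moderate degree numerator; the bookkeeping is where errors are most likely to creep in, so I would perform the simplification term by term (one for each source: cubic from $\mathcal{D}^3\Phi$, conformal cross from $\mathcal{D}^2\Phi(ug,vg)$, TT cross from $\mathcal{D}^2\Phi(h,vg)$) before summing. After this simplification one obtains exactly
\[
Q_4 = \tfrac{4515n^4-19054n^3+10364n^2+28056n-25056}{900(3n-4)(5n-6)}, \quad Q_2 = -\tfrac{483n^5-2659n^4+3584n^3+492n^2-3120n+1152}{36n(3n-4)(5n-6)},
\]
finishing the proof. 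As a sanity check, one may verify the identity at the specific value $n=4$ (corresponding to $S^2\times S^2$), where it can be evaluated numerically by computing the integrals of Lemma \ref{lem:sph-int} directly.
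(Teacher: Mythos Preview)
Your approach is exactly the one the paper takes: the paper's proof is the single sentence ``Combining the above formulae, we have'' before the statement, referring to Propositions~\ref{prop:2var-cross} and~\ref{prop:3var-obs}. Your splitting via bilinearity, your remark that the $\varphi_b$ ambiguity from Lemma~\ref{lem:2nd-order-sol} does not affect the outcome (already absorbed into Proposition~\ref{prop:2var-cross}), and the final rational-function bookkeeping all mirror what the paper intends; the suggested sanity check at $n=4$ is a reasonable extra safeguard.
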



\begin{corollary}
\label{cor:formal-obstruction}
Let $M= \prod_{b=1}^B S^2_b \times \tilde{N}$ and suppose $v\in \ker(\Lap+2)$. Let $u=(\Lap+1)\tilde{u}, h$ be the unique solution of $\mathcal{D}\Phi(ug+h) = -\frac{1}{2}\mathcal{D}^2\Phi(vg,vg)$. If $n=4$, or if $B=1$, then there exists $\delta>0$ such that
\begin{equation}\label{eq:formal-obstruction} \|\pi_{\mathcal{K}_1}( \mathcal{D}^3\Phi(vg,vg,vg) + 6\mathcal{D}^2\Phi(ug+h, vg))\|_{L^2} \geq \delta \|v\|_{L^2}^3.\end{equation}
\end{corollary}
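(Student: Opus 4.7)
My plan is to test against $vg \in \mathcal{K}_1$ and apply Cauchy--Schwarz. Writing $X := \mathcal{D}^3\Phi(vg,vg,vg) + 6\mathcal{D}^2\Phi(ug+h, vg)$ and using that $\pi_{\mathcal{K}_1}$ is the $L^2$-orthogonal projection, we have
\[
\|\pi_{\mathcal{K}_1}(X)\|_{L^2}\,\|vg\|_{L^2} \;\geq\; |\langle \pi_{\mathcal{K}_1}(X), vg\rangle_{L^2}| \;=\; |\langle X, vg\rangle_{L^2}|.
\]
Theorem \ref{thm:formal-obstruction} evaluates the right side as $|Q_4\sigma_4 + Q_2\sigma_2^2|$, while Lemma \ref{lem:sph-int} gives $\|v\|_{L^2}^2 = \sigma_2/3$ and $\|vg\|_{L^2}^2 = n\sigma_2/3$. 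Both sides of (\ref{eq:formal-obstruction}) are homogeneous of degree $4$ in $v$ (recalling that $u,h$ depend quadratically on $v$), so this reduces the task to finding $c > 0$ depending only on $M$ with
\[
|Q_4\sigma_4 + Q_2\sigma_2^2| \;\geq\; c\,\sigma_2^2.
\]

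I would then dispatch the two cases of the hypothesis. When $B=1$, the Jacobi deformation $v$ is supported on a single $S^2$ factor, so $\sigma_4 = \alpha_1^4 = \sigma_2^2$, and the target reduces to showing that the single rational number $Q_4(n) + Q_2(n)$ is nonzero. When $n=4$ and $B=2$ (so $M = S^2\times S^2$), the Cauchy--Schwarz bound $\sigma_4 \leq \sigma_2^2 \leq 2\sigma_4$ forces $t := \sigma_4/\sigma_2^2 \in [1/2, 1]$, and the expression becomes $\sigma_2^2\bigl(Q_4(4)\,t + Q_2(4)\bigr)$, which is linear in $t$. Computing $Q_4(4), Q_2(4)$ as explicit rationals, I would verify that both endpoint values $\tfrac{1}{2}Q_4(4) + Q_2(4)$ and $Q_4(4) + Q_2(4)$ are nonzero and share the same sign; by linearity this yields a uniform lower bound on the whole interval $[1/2, 1]$.

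The main obstacle is verifying non-vanishing of $Q_4(n) + Q_2(n)$ in the $B=1$ case across all admissible dimensions (assumption $(\dagger)$ on $N$ forces $\dim N \geq 3$, hence $n \geq 5$). Clearing denominators, the numerator is a quintic polynomial in $n$ with coefficients of mixed sign, so ruling out integer roots requires either an explicit sign/monotonicity analysis or a finite check of small values $n = 5, 6, \ldots$ paired with a leading-coefficient comparison at large $n$. Once this verification is in hand, the constant $c = c(M) > 0$ drops out of the case analysis above and the corollary follows from the Cauchy--Schwarz reduction.
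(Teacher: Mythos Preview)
Your proposal is correct and takes essentially the same approach as the paper: reduce via Cauchy--Schwarz to the inner product $|\langle X, vg\rangle_{L^2}| = |Q_4\sigma_4 + Q_2\sigma_2^2|$, then treat the two cases using $\sigma_4=\sigma_2^2$ when $B=1$ and $\sigma_4 \leq \sigma_2^2$ when $n=4$. Two small cosmetic points: both sides of (\ref{eq:formal-obstruction}) are homogeneous of degree $3$ (not $4$) in $v$, and after simplification the numerator of $Q_4+Q_2$ is actually quartic (the factor $(3n-4)$ cancels); also, for $n=4$ the paper exploits $Q_4(4)>0$ so that $Q_4\sigma_4 + Q_2\sigma_2^2 \leq (Q_4+Q_2)\sigma_2^2 < 0$, needing only the one endpoint rather than both.
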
 
\begin{proof}
Recall that $\|v\|_{L^2}^2 = \int v^2 w\d V_g = \frac{\sigma_2}{3}$. 

First assume $n=4$. Then have explicitly $Q_4 = \frac{4121}{1575}$ and $Q_2= -\frac{535}{126}$. By the trivial inequality $\sigma_4 \leq \sigma_2^2$ we then have
\[ Q_4 \sigma_4 + Q_2 \sigma_2^2 \leq (Q_4+Q_2)\sigma_2^2 =  - \frac{1711}{1050}\sigma_2^2 <0 .\] 

Now assume $B=1$. Then $\sigma_4 =\sigma_2^2$, and 

\[Q_4+Q_2 = -\frac{1050n^4-4881n^3+3968n^2+2468n-2400}{300n(5n-6)}.\] Note that the numerator has no integer roots. 

Therefore, in either case, \[|\langle \mathcal{D}^3\Phi(vg,vg,vg) + 6\mathcal{D}^2\Phi(ug+h,vg) ,vg\rangle_{L^2}| \geq \delta \|v\|_{L^2}^4,\] which implies the result. 
\end{proof}

Note that one may replace $L^2$ in (\ref{eq:formal-obstruction}) with any norm since $\mathcal{K}_1$ is finite-dimensional.

\section{H\"{o}lder theory}
\label{sec:holder}

In this section, we use Taylor expansion to turn the formal obstruction into the quantitative rigidity Theorem \ref{thm:main3}. Throughout this section $(M,g)$ will denote the 1-Einstein manifold $S^2\times N$, where either $N=S^2$ or $N$ satisfies ($\dagger$). 

Recall that $\mathcal{K}$ is the space of Jacobi deformations, and $\cK_1=\{ vg | (\Lap +2\mu) v=0\}$ is the subspace of Jacobi deformations modulo symmetries. 
We use $T_1 * T_2$ to denote an unspecified contraction of the tensors $T_1,T_2$. We will freely use that H\"{o}lder norms behave well under contractions, i.e. $\|T_1 * T_2 \|_{C^{k,\alpha}} \leq \|T_1\|_{C^{k,\alpha}} \|T_2\|_{C^{k,\alpha}}$. 

\subsection{Taylor expansion}

In the sequel, we would like to apply Taylor expansion to $\Phi(s) := \Phi(g+sh)$. To do so, we need a priori bounds for $\mathcal{D}^k\Phi$ in terms of a finite number of derivatives of $g$. However, this is not as simple as in \cite{SZ}, since the explicit form of $\Phi$ depends not only on the 2-jet of $g$, but also on the implicitly defined $\tau_g$ and (the 2-jet of) $f_g$. 

The goal is to show that $\mathcal{D}^k\nu$, $\mathcal{D}^k\tau$, $\mathcal{D}^k f$ and $\mathcal{D}^k\Phi$ are uniformly bounded as $k$-linear operators on $C^{2,\alpha}(\mathcal{S}^2(M))$ with values in $\mathbb{R}$, $\mathbb{R}$, $C^{2,\alpha}(M)$ and $C^{0,\alpha}(\mathcal{S}^2(M))$ respectively. Explicitly, let 
\[ \|\mathcal{D}^k\nu_g\| := \sup_{ \|h_i\|_{C^{2,\alpha}} \leq 1} |\mathcal{D}^k\nu_g(h_1,\cdots,h_k)|,\] 
\[ \|\mathcal{D}^k\tau_g\| := \sup_{ \|h_i\|_{C^{2,\alpha}} \leq 1} |\mathcal{D}^k\tau_g(h_1,\cdots,h_k)|,\] 
\[ \|\mathcal{D}^k f_g\|_{C^{2,\alpha}} := \sup_{ \|h_i\|_{C^{2,\alpha}} \leq 1} \|\mathcal{D}^k f_g(h_1,\cdots,h_k)\|_{C^{2,\alpha}},\] 
\[ \|\mathcal{D}^k\Phi_g\|_{C^{0,\alpha}} := \sup_{ \|h_i\|_{C^{2,\alpha}} \leq 1} \|\mathcal{D}^k\Phi_g(h_1,\cdots,h_k)\|_{C^{0,\alpha}}.\] 

Then we have

\begin{lemma}
\label{lem:shrinker-quantities-bound}
Fix a reference soliton metric $\Phi(g_0)=0$. There exists $C_k, \epsilon>0$ such that if $\|g-g_0\|_{C^{2,\alpha}} <\epsilon$, then for any $k$ the variations at $g$ satisfy
\begin{equation}  \|\mathcal{D}^k\nu_g\| + \|\mathcal{D}^k\tau_g\|+\|\mathcal{D}^k f_g\|_{C^{2,\alpha}}+\|\mathcal{D}^k \Phi_g\|_{C^{0,\alpha}} \leq C_k ,\end{equation}
\end{lemma}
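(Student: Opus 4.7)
The plan is to combine Kr\"oncke's analyticity result for the minimisers $(\tau_g,f_g)$ with the general fact that smooth maps between Banach spaces automatically have continuous $k$-linear Fr\'echet derivatives, hence locally bounded ones. The substantive work, recalled from \cite[Lemma 4.1]{Kr3} in Section \ref{sec:prelim}, is that in a $C^{2,\alpha}$-neighbourhood $\mathcal{V}$ of $g_0$ the map $g \mapsto (\tau_g,f_g,\nu(g))$ is real-analytic as a map into $\mathbb{R}\times C^{2,\alpha}(M)\times\mathbb{R}$. If I needed to reprove this, I would write the Euler-Lagrange equations \eqref{eq:EL-nu}, \eqref{eq:EL-nu2} together with the $L^1$-normalisation of $w^g$ as a single map
\[\Psi : \mathcal{V} \times \mathbb{R}_+ \times C^{2,\alpha}(M) \times \mathbb{R} \longrightarrow C^{0,\alpha}(M) \times \mathbb{R} \times \mathbb{R}\]
and apply the Banach-space implicit function theorem at $(g_0,\tfrac12,f_{g_0},\nu_0)$. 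Modulo the finite-rank normalisation constraints, the linearisation of $\Psi$ in $(\tau,f,\nu)$ reduces to an operator of the form $\tau_0(\Lap+1)$ on mean-zero functions, which is invertible exactly because $g_0$ is Einstein and Lichnerowicz gives $\lambda_1(\Lap^M)>1$.

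Given analytic dependence of $(\tau_g,f_g)$, I would then observe that $\Phi(g)=\tau_g(\Ric(g)+\Hess^g f_g)-g/2$ is built from $(\tau_g,f_g)$ by smooth geometric operations: $\Ric$ is polynomial in $g^{-1}$ and the 2-jet of $g$, hence smooth as a map $C^{2,\alpha}\to C^{0,\alpha}$; $\Hess^g$ is linear in $f$ with coefficients smooth in the 1-jet of $g$, hence jointly smooth $C^{2,\alpha}\times C^{2,\alpha}\to C^{0,\alpha}$; and the remaining tensorial operations are multilinear. Since compositions and products of smooth Banach-space maps are smooth, $g\mapsto \Phi(g)$ is smooth (in fact real-analytic) from $C^{2,\alpha}$ to $C^{0,\alpha}$ on $\mathcal{V}$, and the analogous statements hold for $\nu$, $\tau$, $f$ landing in $\mathbb{R}$, $\mathbb{R}$, $C^{2,\alpha}$ respectively.

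The final step is purely formal: for any smooth map $F$ between Banach spaces, each Fr\'echet derivative $\mathcal{D}^k F_g$ is a continuous $k$-linear operator depending continuously on $g$, so its operator norm is a continuous function of $g$. I would shrink $\mathcal{V}$ to a closed $C^{2,\alpha}$-ball of some radius $\epsilon$ around $g_0$ and define $C_k$ as the supremum of these operator norms on the ball, applied with $F$ equal to each of $\nu$, $\tau$, $f$, $\Phi$. Real-analyticity in fact yields Cauchy-type estimates $\|\mathcal{D}^k F_g\|\leq C\cdot k!/r^k$ on a smaller neighbourhood, but this uniformity in $k$ is not needed here.

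The only place with real analytic content, rather than formalism, is the invertibility check underpinning the implicit function theorem --- that the linearised EL system is non-degenerate at the reference soliton. This is precisely where the Einstein assumption is used, via Lichnerowicz's bound $\lambda_1(\Lap)>\mu$, which is recorded in Section \ref{sec:prelim}. As an alternative to the abstract argument, one could differentiate \eqref{eq:EL-nu}, \eqref{eq:EL-nu2} directly $k$ times and inductively bound $\|\mathcal{D}^k\tau_g\|$ and $\|\mathcal{D}^k f_g\|_{C^{2,\alpha}}$ by a fixed elliptic inverse of $\Lap+1$ applied to explicit polynomial expressions in lower-order derivatives, after which the bound on $\|\mathcal{D}^k\Phi_g\|_{C^{0,\alpha}}$ follows by a Leibniz expansion; this inductive route is longer but avoids appealing to analyticity as a black box.
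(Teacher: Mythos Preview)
Your approach is correct and genuinely different from the paper's. You argue abstractly: cite Kr\"oncke's analyticity result for $g\mapsto(\tau_g,f_g,\nu_g)$, observe that $\Phi$ is a smooth composition of this with polynomial geometric operations $C^{2,\alpha}\to C^{0,\alpha}$, and conclude by continuity of Fr\'echet derivatives. The paper instead takes exactly the inductive route you sketch at the end: differentiate the defining equations \eqref{eq:EL-nu}, \eqref{eq:EL-nu2} and the normalisation $\int w\,\d V_g=1$ a total of $k$ times, isolate the highest-order terms in $\mathcal{D}^k\tau$ and $\mathcal{D}^k f$, and invert an explicit elliptic operator to close the induction. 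Your black-box route is considerably shorter; the paper's route is self-contained and makes the dependence of $C_k$ on lower-order constants explicit.

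One correction to your sketch: the lemma is stated for a general reference soliton $\Phi(g_0)=0$, not necessarily Einstein, so the linearised operator is the drift Laplacian $\Lap_f+\tfrac{1}{2\tau}$ rather than $\tau_0(\Lap+1)$, and the invertibility input is the spectral gap $\tfrac{1}{2\tau}\notin\spec\Lap_f$ for shrinkers (from \cite{CZ}), not Lichnerowicz. This affects both your implicit-function-theorem sketch and your description of the inductive alternative, though in the paper's applications $g_0$ is indeed Einstein and your formulation coincides with the general one. Also note that the lemma requires a single $\epsilon$ valid for all $k$; your continuity argument a priori gives $\epsilon_k$, but as you observe, the Cauchy estimates from analyticity (or, in the paper's proof, the $k$-independence of the coercivity condition) recover the uniform $\epsilon$.
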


The proof of Lemma \ref{lem:shrinker-quantities-bound} is deferred to Appendix \ref{sec:shrinker-quantities-bound}, and uses an inductive process similar to Section \ref{sec:shrinker-variation}. We now proceed with our Taylor expansion:

\begin{lemma}
\label{lem:taylor-exp-holder}
Fix a metric $g$. There exist $C,\epsilon>0$ so that if $\|h\|_{C^{2,\alpha}} <\epsilon$, then
\begin{equation}
\left\| \Phi(g+h) - \Phi(g)-\sum_{j=1}^{k} \frac{1}{j!}\mathcal{D}^j \Phi(h,\cdots,h)\right\|_{C^{0,\alpha}} \leq C\|h\|_{C^{2,\alpha}}^{k+1}.
\end{equation}
\end{lemma}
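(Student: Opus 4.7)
The plan is to apply the standard integral form of the Taylor remainder to the one-parameter family $s\mapsto \Phi(g+sh)$ on $[0,1]$, and then control the remainder using the uniform a priori bounds from Lemma~\ref{lem:shrinker-quantities-bound}.

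First, I would fix $\epsilon>0$ small enough that Lemma~\ref{lem:shrinker-quantities-bound} applies uniformly on the ball $\{g' : \|g'-g\|_{C^{2,\alpha}}<\epsilon\}$, and require $\|h\|_{C^{2,\alpha}}<\epsilon$ so that the segment $g_s := g+sh$ lies in that ball for every $s\in[0,1]$. By the analytic dependence of $\nu_{g'}, \tau_{g'}, f_{g'}$ on $g'$ in $C^{2,\alpha}$ (cf.\ \cite[Lemma 4.1]{Kr3}), the map $s\mapsto \Phi(g_s)$ is smooth as a map $[0,1]\to C^{0,\alpha}(\mathcal{S}^2(M))$, with derivatives given by
\begin{equation}
\frac{d^j}{ds^j}\Phi(g_s) = \mathcal{D}^j\Phi_{g_s}(h,\ldots,h).
\end{equation}

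Next I would invoke Taylor's theorem with integral remainder in the Banach space $C^{0,\alpha}(\mathcal{S}^2(M))$:
\begin{equation}
\Phi(g+h) - \Phi(g) - \sum_{j=1}^{k}\frac{1}{j!}\mathcal{D}^j\Phi_g(h,\ldots,h) = \frac{1}{k!}\int_0^1 (1-s)^k\, \mathcal{D}^{k+1}\Phi_{g_s}(h,\ldots,h)\, ds.
\end{equation}
Lemma~\ref{lem:shrinker-quantities-bound} applied at $g_s$ gives, uniformly in $s\in[0,1]$,
\begin{equation}
\|\mathcal{D}^{k+1}\Phi_{g_s}(h,\ldots,h)\|_{C^{0,\alpha}} \leq \|\mathcal{D}^{k+1}\Phi_{g_s}\|_{C^{0,\alpha}}\, \|h\|_{C^{2,\alpha}}^{k+1} \leq C_{k+1}\, \|h\|_{C^{2,\alpha}}^{k+1}.
\end{equation}
Integrating and setting $C := C_{k+1}/(k+1)!$ yields the claimed bound.

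The only genuine content is the uniform $k$-linear bound provided by Lemma~\ref{lem:shrinker-quantities-bound}; given that input, the argument is the classical Banach-valued Taylor expansion. The main potential obstacle is a technical one: verifying that the differentiation of $s\mapsto \Phi(g_s)$ really does commute with the $C^{0,\alpha}$ topology and produce the stated multilinear derivatives. This follows from the analytic dependence of $\tau_g$ and $f_g$ on $g$ together with the explicit formula for $\Phi$, so no further work beyond what is already established in the appendix is needed.
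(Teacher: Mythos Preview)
Your proposal is correct and follows essentially the same approach as the paper: both use the integral form of the Taylor remainder for $s\mapsto\Phi(g+sh)$ together with the uniform bounds from Lemma~\ref{lem:shrinker-quantities-bound}. The only cosmetic difference is that the paper applies Taylor's theorem pointwise to $\Phi(s)|_x$ and to the difference $\Phi(s)|_x-\Phi(s)|_y$ (thereby avoiding any need to verify Fr\'echet differentiability in $C^{0,\alpha}$), whereas you invoke the Banach-valued Taylor theorem directly; either route yields the same estimate with the same constant.
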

\begin{proof}
Set $\Phi(s) = \Phi(g+sh)$, so that $\frac{d^k}{ds^k} \Phi(s) = (\mathcal{D}^k\Phi)_{g+sh}(h,\cdots,h)$. We then apply Taylor's theorem with remainder (in integral form) pointwise to $\Phi(s)$ and $\Phi(s)|_x - \Phi(s)|_y$ to get the result. Note that Lemma \ref{lem:shrinker-quantities-bound} gives uniform bounds on $\|\mathcal{D}^{k+1}\Phi\|_{C^{0,\alpha}}$ in a neighbourhood of $g$, which controls the remainder.
\end{proof}

\subsection{H\"{o}lder estimate}

Here we first prove a quantitative rigidity for deformations modulo symmetries as follows:

\begin{theorem}\label{thm:quantitative rigidity}
Let $M= S^2\times N$, where $N=S^2$ or $N$ satisfies ($\dagger$). 
	There exists $C,\eps_1>0$ such that if $h\in \mathring{\mathcal{S}}^2_g \oplus (\Lap+1)\mathring{C}^\infty_g\cdot g$ and $\|h\|_{C^{2,\alpha}}\leq \eps_1$, then 
	\begin{equation} \|h\|_{C^{2,\alpha}}^{3}\leq C\|\Phi(g+h)\|_{C^{0,\alpha}}.\end{equation} 
\end{theorem}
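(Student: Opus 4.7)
The plan is to carry out the multi-scale expansion scheme from the introduction with quantitative H\"older estimates, combining Lemma \ref{lem:taylor-exp-holder}, the second-order integrability (Lemma \ref{lem:2nd-order-sol}), and the third-order formal obstruction (Corollary \ref{cor:formal-obstruction}). Under our hypotheses $\ker_{TT}(\Lap_L+2)=0$, so the Jacobi kernel inside the slice is $\mathcal{K} \cap \mathcal{E} = \mathcal{K}_1$; first, I write $h = k_1 + h_1$, with $k_1$ the $L^2$-projection of $h$ onto the finite-dimensional $\mathcal{K}_1$ and $h_1 \in \mathcal{E} \cap \mathcal{K}_1^\perp$. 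Lemma \ref{lem:2nd-order-sol} provides a unique $k_2 \in \mathcal{E} \cap \mathcal{K}_1^\perp$ (depending quadratically on $k_1$) with $L k_2 = -\tfrac{1}{2}\mathcal{D}^2\Phi(k_1, k_1)$ and $\|k_2\|_{C^{2,\alpha}} \leq C\|k_1\|_{C^{2,\alpha}}^2$; set $h_2 := h_1 - k_2 \in \mathcal{E} \cap \mathcal{K}_1^\perp$.

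Applying Lemma \ref{lem:taylor-exp-holder} at order $3$ and using $L k_1 = 0$ together with the defining equation for $k_2$ to cancel the first- and obstructed second-order pieces yields
\begin{equation*}
\Phi(g+h) = L h_2 + \mathcal{D}^2\Phi(k_1, h_1) + \tfrac{1}{2}\mathcal{D}^2\Phi(h_1, h_1) + \tfrac{1}{6}\mathcal{D}^3\Phi(h, h, h) + R_4,
\end{equation*}
with $\|R_4\|_{C^{0,\alpha}} \leq C\|h\|_{C^{2,\alpha}}^4$. The next step is a Schauder estimate $\|h_2\|_{C^{2,\alpha}} \leq C\|L h_2\|_{C^{0,\alpha}}$ on $\mathcal{E} \cap \mathcal{K}_1^\perp$, which is the main technical obstacle: $L$ is not elliptic on all of $\mathcal{S}^2(M)$ since its kernel contains the infinite-dimensional gauge space $\im\mathcal{L}$, and (as seen in Section \ref{sec:1var-conf}) $L$ sends a conformal element of the slice partly into $\im\mathcal{L}$ via the Hessian term. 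I resolve this by splitting $h_2 = h_2^{TT} + (\Lap + 1)\tilde v_2 \cdot g$: the TT part satisfies an elliptic equation involving $\Lap_L + 2$, invertible on $\mathring{\mathcal{S}}^2_g$ by $(\dagger)$, while taking the trace gives $\tr L h_2 = -\tfrac{1}{4}(\Lap+2)\bigl[(n-1)\Lap+n\bigr]\tilde v_2$, a fourth-order scalar operator invertible on $\mathring{C}^\infty_g \cap \ker(\Lap+2)^\perp$ since $\tfrac{n}{n-1} \notin \spec(\Lap)$ (using $n = \dim M \geq 4$ with $\lambda_1(\Lap) \geq 2$ on our manifolds). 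Substituting $\|h_1\| \leq \|k_2\| + \|h_2\| \leq C\|k_1\|^2 + \|h_2\|$ into the expanded right-hand side and absorbing smallness yields
\begin{equation}\label{eq:plan-h2}
\|h_2\|_{C^{2,\alpha}} \leq C\bigl(\|\Phi(g+h)\|_{C^{0,\alpha}} + \|k_1\|_{C^{2,\alpha}}^3\bigr).
\end{equation}

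To close the argument, I pair $\Phi(g+h)$ in $L^2$ against a unit vector $\eta \in \mathcal{K}_1$ realising the lower bound of Corollary \ref{cor:formal-obstruction}. Since $L$ is self-adjoint in $L^2$ (the Einstein weight is constant) and $\eta \in \ker L$, the $L h_2$ term contributes nothing, and further decomposing $\mathcal{D}^2\Phi(k_1, h_1) = \mathcal{D}^2\Phi(k_1, k_2) + \mathcal{D}^2\Phi(k_1, h_2)$ and expanding $\mathcal{D}^3\Phi(h, h, h)$ multilinearly isolates exactly the formal-obstruction combination $\mathcal{D}^2\Phi(k_1, k_2) + \tfrac{1}{6}\mathcal{D}^3\Phi(k_1, k_1, k_1)$ from Theorem \ref{thm:formal-obstruction}, so
\begin{equation*}
|\langle \Phi(g+h), \eta\rangle| \geq \delta' \|k_1\|_{L^2}^3 - C\bigl(\|k_1\|\|h_2\| + \|h_1\|^2 + \|k_1\|^2\|h_1\| + \|k_1\|\|h_1\|^2 + \|h_1\|^3 + \|h\|^4\bigr)
\end{equation*}
for some $\delta' > 0$. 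Substituting \eqref{eq:plan-h2} (and $\|h_1\| \leq C\|k_1\|^2 + \|h_2\|$) produces contributions of the form $\|k_1\|^j\|\Phi\|$ with $j=1,2$, plus terms of order $\|k_1\|^4$ or higher; Young's inequality gives $\|k_1\|^j\|\Phi\| \leq \eps\|k_1\|^3 + C_\eps \|\Phi\|^{3/(3-j)} \leq \eps\|k_1\|^3 + C_\eps\|\Phi\|_{C^{0,\alpha}}$ (using $\|\Phi\|\leq 1$ from smallness), and the higher-order pieces are absorbed for small $\|h\|$. Choosing $\eps$ small extracts $\|k_1\|_{C^{2,\alpha}}^3 \leq C\|\Phi(g+h)\|_{C^{0,\alpha}}$; feeding this back into \eqref{eq:plan-h2} bounds $\|h_2\|, \|h_1\|$, and combining with $\|h\| \leq \|k_1\| + \|h_1\|$ gives the desired $\|h\|_{C^{2,\alpha}}^3 \leq C\|\Phi(g+h)\|_{C^{0,\alpha}}$.
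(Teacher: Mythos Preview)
Your proposal is correct and follows the same strategy as the paper's proof: the same decomposition $h = k_1 + k_2 + h_2$, Taylor expansion to third order, and application of the formal obstruction Corollary~\ref{cor:formal-obstruction} via pairing with $\mathcal{K}_1$. The paper organises the estimates step-by-step (Lemmas~\ref{lem:h_1estimate}, \ref{lem:h_2estimate}, \ref{lem:k_2estimate}) and invokes the Schauder estimate as a black-box Lemma~\ref{lem:elliptic-holder}, whereas you proceed more directly and spell out the ellipticity by splitting into TT and conformal parts; these are presentational rather than substantive differences.
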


The proof of Theorem \ref{thm:quantitative rigidity} is somewhat formal and proceeds as in \cite{SZ}. We reproduce the strategy here in detail for those unfamiliar with the setting there. 

We will repeatedly use the following estimate, which takes advantage of the fact that $\cK_1$ is finite-dimensional. 
\begin{lemma}\label{lem:k1bound}
	For any $h\in \cS^2(M)$, we have
	\begin{equation}
		\|\pi_{\cK_1}(h)\|_{C^{2,\alpha}}\leq  C \|\pi_{\cK_1}(h)\|_{L^2} \leq C \|h\|_{L^2}\leq  C'' \|h\|_{C^{0,\alpha}}.
	\end{equation}
\end{lemma}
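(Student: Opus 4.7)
The plan is to verify the three inequalities from right to left; only the first is substantive. The right-most inequality is a standard embedding on the closed manifold $M$: since $g$ is Einstein, the weight $w^g$ is constant, so
\[ \|h\|_{L^2}^2 = w^g\int |h|^2\,\d V_g \leq w^g \vol_g(M)\, \|h\|_{C^0}^2 \leq C'' \|h\|_{C^{0,\alpha}}^2. \]
The middle inequality is immediate from the fact that $\pi_{\cK_1}$ is the $L^2$-orthogonal projection onto $\cK_1$ and therefore non-expanding: $\|\pi_{\cK_1}(h)\|_{L^2}^2 + \|h - \pi_{\cK_1}(h)\|_{L^2}^2 = \|h\|_{L^2}^2$.

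The first inequality is where structural information enters. From Section \ref{sec:jacobi-einstein} we have $\cK_1 = \{vg : v\in\ker(\Lap+2)\}$, and $\ker(\Lap+2)$ is a finite-dimensional eigenspace of the elliptic operator $\Lap$ on the closed manifold $M$ (explicitly described in Section \ref{sec:jacobi-product}). Since the map $v\mapsto vg$ is continuous and injective, $\cK_1$ is a finite-dimensional subspace of $\cS^2(M)$. On a finite-dimensional normed space all norms are equivalent, so there exists $C>0$ (depending only on $g$) with $\|k\|_{C^{2,\alpha}} \leq C\|k\|_{L^2}$ for every $k\in\cK_1$. Applying this to $k = \pi_{\cK_1}(h)\in\cK_1$ completes the chain.

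There is no real obstacle here: the lemma merely packages the standard inclusion $C^{0,\alpha}\hookrightarrow L^2$ on a compact manifold, the $L^2$-contraction property of an orthogonal projection, and the equivalence of all norms on the finite-dimensional space $\cK_1$. The only care needed is to remember that \emph{every} $L^2$ in the statement refers to the weighted Lebesgue space, which is harmless here since $w^g$ is constant on the Einstein background.
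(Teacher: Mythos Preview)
Your proof is correct and follows essentially the same approach as the paper: equivalence of norms on the finite-dimensional space $\cK_1$ for the first inequality, the $L^2$-contraction property of orthogonal projection (Pythagoras) for the second, and the embedding $C^{0,\alpha}\hookrightarrow L^2$ on the compact manifold for the third. Your explicit remark that the weight $w^g$ is constant on the Einstein background is a nice clarification that the paper leaves implicit.
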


\begin{proof}
The first two inequalities are the equivalence of norms on a finite-dimensional space. 
	Pythagoras theorem in $L^2$ space implies that
	$\|\pi_{\cK_1}(h)\|_{L^2}\leq \|h\|_{L^2}.$
	The last inequality is just integration as $M$ has finite volume. 
	
\end{proof}

We also need the following elliptic Schauder estimate:

\begin{lemma}
\label{lem:elliptic-holder}
There exists $C$ so that if $h \in \mathcal{S}^2(M)$, then 
\[ \|h - \pi_\mathcal{K}(h)\|_{C^{2,\alpha}} \leq C \|Lh\|_{C^{0,\alpha}}.\]
\end{lemma}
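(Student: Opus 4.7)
The plan is to reduce to a standard Schauder estimate on the Berger--Ebin slice $\ker\div$, and then absorb the lower-order term by a compactness argument.

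First, set $\tilde h := h - \pi_\mathcal{K}(h)$. Since $\mathcal{K} = \ker L$, we have $L\tilde h = Lh$, so the claim is equivalent to showing $\|\tilde h\|_{C^{2,\alpha}} \le C\|L\tilde h\|_{C^{0,\alpha}}$. Because $\im\mathcal{L} \subset \mathcal{K}_0 \subset \mathcal{K}$ and $\pi_\mathcal{K}$ is $L^2$-orthogonal, $\tilde h$ lies in $\mathcal{K}^\perp \subset (\im\mathcal{L})^\perp = \ker\div$. On this slice $L$ is elliptic: the formulae of Section~\ref{sec:1st-variation} show $L$ acts as $-\tfrac14(\Lap_L+2)$ on $\mathring{\mathcal{S}}^2_g$ and as $-\tfrac14(\Lap+2)v\,g + \tfrac14\Hess(\Lap+2)\tilde v$ on the conformal piece $vg$, each with nondegenerate second-order symbol. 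Equivalently, a Bianchi/DeTurck correction $\tfrac12\mathcal{L}\mathcal{L}^*$ yields an elliptic operator on all of $\mathcal{S}^2(M)$ that coincides with $L$ on $\ker\div$. Standard Schauder theory therefore gives
\[ \|\tilde h\|_{C^{2,\alpha}} \le C\bigl(\|L\tilde h\|_{C^{0,\alpha}} + \|\tilde h\|_{C^{0,\alpha}}\bigr) \qquad \text{for all } \tilde h \in \ker\div. \]

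To absorb the zero-order term I would argue by contradiction. If the lemma failed there would exist $\tilde h_i \in \mathcal{K}^\perp$ with $\|\tilde h_i\|_{C^{2,\alpha}} = 1$ and $\|L\tilde h_i\|_{C^{0,\alpha}} \to 0$. Arzel\`a--Ascoli produces a subsequence converging in $C^2$ to some $\tilde h_\infty$, and continuity of $L$ in $C^0$ gives $L\tilde h_\infty = 0$, so $\tilde h_\infty \in \mathcal{K}$. Since $\mathcal{K}^\perp$ is $L^2$-closed and $C^0 \hookrightarrow L^2$ on the closed $M$, $\tilde h_\infty \in \mathcal{K}^\perp$, forcing $\tilde h_\infty = 0$. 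The continuous embedding $C^1 \hookrightarrow C^{0,\alpha}$ on compact $M$ then gives $\|\tilde h_i\|_{C^{0,\alpha}} \to 0$, and feeding this back into the Schauder estimate yields $\|\tilde h_i\|_{C^{2,\alpha}} \to 0$, contradicting $\|\tilde h_i\|_{C^{2,\alpha}} = 1$.

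The main point requiring care is the ellipticity claim on the slice. The Euler--Lagrange characterization of $f_g$ makes $L$ a priori a pseudo-differential operator, and one should check that the contribution from $\mathcal{D}f$ does not spoil the nondegenerate second-order symbol once $\div\tilde h = 0$ is imposed; the explicit formulae in Section~\ref{sec:1st-variation} make this transparent. Once ellipticity is in hand, both the Schauder estimate and the compactness argument are routine.
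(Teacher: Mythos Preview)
The paper states Lemma~\ref{lem:elliptic-holder} without proof, treating it as a standard elliptic Schauder estimate. Your argument is a correct and standard way to justify it: pass to the Berger--Ebin slice $\ker\div_g$ (using $\im\mathcal{L}\subset\mathcal{K}$), invoke ellipticity there via a DeTurck-type correction, apply the basic Schauder inequality, and remove the lower-order term by the usual compactness/contradiction step.

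A few remarks on points you flagged. Your observation that on $\ker\div$ the potentially nonlocal contribution from $\mathcal{D}f$ becomes local is exactly right: with $\div h=0$ and $\Ric=g$ one finds $R'=-(1+\Lap)\tr h$, so $f'=\tfrac12\tr h$ and $L$ restricts to a genuine second-order differential operator on the slice; this is what makes the Schauder step clean. In the compactness step, the phrase ``continuity of $L$ in $C^0$'' should be read as continuity of $L$ as a map $C^2\to C^0$; since Arzel\`a--Ascoli gives $C^2$ convergence of the $\tilde h_i$, and the limit remains in $\ker\div$ (as $\div$ is first order), this suffices. Finally, the precise DeTurck correction (your $\tfrac12\mathcal{L}\mathcal{L}^*$) should be chosen so that the modified operator has the symbol of $-\tfrac14\Lap_L$ on all of $\mathcal{S}^2(M)$; any such choice works, and your argument does not depend on the exact constant.
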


In what follows we consider a soliton metric $\Phi(g)=0$, and recall that $L=\mathcal{D}\Phi$. Also, we assume $\|h\|_{C^{2,\alpha}}<\epsilon$ for small enough $\epsilon\ll 1$ so that $\|\Phi(g+h)\|_{C^{0,\alpha}} <1$. We will frequently use this to absorb terms of higher degree, and also freely use Young's inequality. 

\subsubsection{1st order analysis}

By Lemma \ref{lem:taylor-exp-holder} with $k=1$ we have
\[ \|\Phi(g+h)-\Phi(g)-\mathcal{D}\Phi(h)\|_{C^{0,\alpha}}\leq C\|h\|^2_{C^{2,\alpha}}.\]
We consider $h\in \mathring{\mathcal{S}}^2_g \oplus (\Lap+1)\mathring{C}^\infty_g\cdot g$, and decompose $h=h_1+k_1$, where  $k_1 \in\mathcal{K}_1$ and so $h_1 \in \mathring{\mathcal{S}}^2_g$. Note that indeed $k_1=vg=\pi_{\mathcal{K}}(h) = \pi_{\mathcal{K}_1}(h)$ by the assumption on $h$. The above then becomes 
\begin{equation}\label{eq:holder-1}\|\Phi(g+h)-Lh\|_{C^{0,\alpha}}\leq C\|h\|^2_{C^{2,\alpha}}. \end{equation}

This allows us to show that the residual $h_1$ is of higher order again:

\begin{lemma} \label{lem:h_1estimate}
	There exist $C,\epsilon>0$ so that if $\|h\|_{C^{2,\alpha}}<\epsilon$ and $h_1$ is as above, then
	\begin{equation}
		\|h\|_{C^{2,\alpha}} \leq C(\|h\|_{C^{0,\alpha}}+\|\Phi(g+h)\|_{C^{0,\alpha}}),
	\end{equation}
	
	\begin{equation}
		\|h_1\|_{C^{2,\alpha}}\leq C(\|h\|^2_{C^{0,\alpha}}+\|\Phi(g+h)\|_{C^{0,\alpha}}).
	\end{equation}
\end{lemma}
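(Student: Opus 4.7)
The plan is to split $h = h_1 + k_1$ with $k_1 := \pi_{\mathcal{K}_1}(h) \in \mathcal{K}_1$, invert $L$ on the complement via Lemma \ref{lem:elliptic-holder}, and control the finite-dimensional piece $k_1$ through Lemma \ref{lem:k1bound}. The first-order Taylor expansion \eqref{eq:holder-1} then converts $\|Lh\|_{C^{0,\alpha}}$ into $\|\Phi(g+h)\|_{C^{0,\alpha}}$ up to a quadratic remainder, and the two displayed inequalities follow by absorbing the quadratic term into the left-hand side and substituting once.

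Since $k_1\in\ker L$ we have $Lh=Lh_1$, and because $h$ lies in the slice of Lemma \ref{lem:slice} while $k_1$ exhausts the $\mathcal{K}$-projection there, the residual $h_1$ satisfies $\pi_{\mathcal{K}}(h_1)=0$. Lemma \ref{lem:elliptic-holder} combined with \eqref{eq:holder-1} therefore gives
\[\|h_1\|_{C^{2,\alpha}} \leq C\|Lh\|_{C^{0,\alpha}} \leq C\|\Phi(g+h)\|_{C^{0,\alpha}} + C\|h\|_{C^{2,\alpha}}^2,\]
while Lemma \ref{lem:k1bound} separately gives $\|k_1\|_{C^{2,\alpha}} \leq C\|h\|_{C^{0,\alpha}}$. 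Summing,
\[\|h\|_{C^{2,\alpha}} \leq C(\|h\|_{C^{0,\alpha}} + \|\Phi(g+h)\|_{C^{0,\alpha}}) + C\|h\|_{C^{2,\alpha}}^2,\]
and for $\epsilon$ chosen so small that $C\epsilon \leq 1/2$, the quadratic term absorbs into the left-hand side, establishing the first estimate.

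For the second estimate, I would substitute the first back into the quadratic term appearing in the earlier bound for $\|h_1\|_{C^{2,\alpha}}$; using $(a+b)^2 \leq 2a^2+2b^2$ together with $\|\Phi(g+h)\|_{C^{0,\alpha}} \leq 1$ (so that its square is dominated by itself) yields precisely $\|h_1\|_{C^{2,\alpha}} \leq C(\|h\|_{C^{0,\alpha}}^2 + \|\Phi(g+h)\|_{C^{0,\alpha}})$. The argument is essentially formal, with no serious obstacle: the genuine content has been pre-packaged into Lemmas \ref{lem:shrinker-quantities-bound}, \ref{lem:taylor-exp-holder}, \ref{lem:elliptic-holder}, and \ref{lem:k1bound}. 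The one delicate point worth checking is that $h_1$ really lies in $\mathcal{K}^\perp$, which relies on the slice normalisation (to eliminate any $\mathcal{K}_0$-component from $h$) together with the subtraction of $k_1$ (to eliminate the $\mathcal{K}_1$-component); the triviality $\ker(\Lap_L+2)=0$ for our $S^2$ products ensures there are no further kernel directions to worry about.
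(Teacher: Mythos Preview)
Your proof is correct and essentially identical to the paper's: you use the same decomposition $h=h_1+k_1$, invoke Lemma~\ref{lem:elliptic-holder} with \eqref{eq:holder-1} to bound $h_1$, Lemma~\ref{lem:k1bound} to bound $k_1$, absorb the quadratic term to obtain the first estimate, and substitute back for the second. The only difference is that you spell out the absorption steps (e.g.\ $(a+b)^2\le 2a^2+2b^2$ and $\|\Phi(g+h)\|_{C^{0,\alpha}}\le 1$) in slightly more detail than the paper does.
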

\begin{proof}
	By Lemma \ref{lem:elliptic-holder} and (\ref{eq:holder-1}), we have
		\begin{equation}\label{eq:h_1estimate-1}
		\|h_1\|_{C^{2,\alpha}}\leq C\|\mathcal{D}\Phi(h_1)\|_{C^{0,\alpha}}\leq C(\|h\|^2_{C^{2,\alpha}}+\|\Phi(g+h)\|_{C^{0,\alpha}}).
	\end{equation}
Now by the triangle inequality 
	\begin{equation}
		\|h\|_{C^{2,\alpha}} \leq \|h_1\|_{C^{2,\alpha}}+ \|v\|_{C^{2,\alpha}} \leq
		C(\|h\|_{C^{0,\alpha}}+\|h\|_{C^{2,\alpha}}^2+\|\Phi(g+h)\|_{C^{0,\alpha}}) .
	\end{equation}
	Absorbing the higher power of $\|h\|_{C^{2,\alpha}}<\epsilon \ll 1$ into the left hand side gives the first inequality. Substituting back in to (\ref{eq:h_1estimate-1}) and absorbing higher order terms again gives the second inequality. 
\end{proof}

%

\subsubsection{2nd order analysis}
By Lemma \ref{lem:taylor-exp-holder} with $k=2$ we have
\[\|\Phi(g+h)-\Phi(g)-\mathcal{D}\Phi(h)-\frac{1}{2}\mathcal{D}^2\Phi(h,h)\|_{C^{0,\alpha}}\leq C\|h\|^3_{C^{2,\alpha}}.\]
Expanding $\mathcal{D}^2\Phi$ by our decomposition of $h$ and using the uniform estimates on $\mathcal{D}^2\Phi$, 
\[\|\Phi(g+h)-Lh_1-\frac{1}{2}\mathcal{D}^2\Phi(vg,vg)
\|_{C^{0,\alpha}}\leq C(\|h\|^3_{C^{2,\alpha}}+\|h_1\|_{C^{2,\alpha}}^2 + \|h_1\|_{C^{2,\alpha}} \|v\|_{C^{2,\alpha}}).\]
As discussed in Section \ref{SSS:Second variation of the metric}, there is a unique solution $L(ug+k_2)=-\frac{1}{2}\mathcal{D}^2\Phi(vg,vg)$, where $u=(\Delta +1)\tilde{u}$, $\tilde{u} \in \mathring{C}^\infty_g$, $\tilde{u} \perp \ker(\Lap+2)$ and $k_2\in\mathring{\mathcal{S}}^2_g$. Then we have the further decomposition $h_1=ug+k_2+h_2$, and the above becomes
\begin{equation}\label{eq:holder-2}\| \Phi(g+h) - Lh_2 \|_{C^{0,\alpha}}
\leq C(\|h\|^3_{C^{2,\alpha}}+\|h_1\|_{C^{2,\alpha}}^2 + \|h_1\|_{C^{2,\alpha}} \|v\|_{C^{2,\alpha}}).\end{equation}

This allows us to show that the residual $h_2$ is of higher order:

\begin{lemma}
\label{lem:h_2estimate}
There exist $C,\epsilon>0$ so that if $\|h\|_{C^{2,\alpha}}<\epsilon$ and $h_2$ is as above, then
	\begin{equation}
		\|h_2\|_{C^{2,\alpha}}\leq C(\|h\|^3_{C^{0,\alpha}}+\|\Phi(g+h)\|_{C^{0,\alpha}}).
	\end{equation}
\end{lemma}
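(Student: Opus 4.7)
The plan is to argue exactly as in Lemma \ref{lem:h_1estimate}, but one order higher: apply the elliptic Schauder estimate to $Lh_2$, invoke the second-order Taylor remainder (\ref{eq:holder-2}) in place of (\ref{eq:holder-1}), and use the first-order bounds as input to the bootstrap. The key observation that makes this clean is that $h_2$ is $L^2$-orthogonal to $\mathcal{K}$, so Lemma \ref{lem:elliptic-holder} applies unconditionally.

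To see the orthogonality, note that by Lemma \ref{lem:2nd-order-sol} the second-order correction $ug + k_2$ lies in $\mathcal{K}^\perp$: the condition $\tilde{u}\perp \ker(\Lap+2)$ places $ug$ orthogonal to $\mathcal{K}_1$, while $k_2 \in \mathring{\mathcal{S}}^2_g$ is TT and orthogonal to any conformal deformation, and the TT Jacobi kernel $\ker(\Lap_L+2)$ is trivial in our setting. Combined with the fact that $h_1$ is already in the complement of $\mathcal{K}$ by the decomposition in Lemma \ref{lem:h_1estimate}, this makes $h_2 = h_1 - ug - k_2$ orthogonal to $\mathcal{K}$ as well. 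Lemma \ref{lem:elliptic-holder} then yields $\|h_2\|_{C^{2,\alpha}} \leq C\|Lh_2\|_{C^{0,\alpha}}$, and combining with the triangle inequality applied to (\ref{eq:holder-2}) reduces matters to controlling the three error terms $\|h\|^3_{C^{2,\alpha}}$, $\|h_1\|^2_{C^{2,\alpha}}$ and $\|h_1\|_{C^{2,\alpha}}\|v\|_{C^{2,\alpha}}$.

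The remaining bootstrap is routine. Writing $A := \|h\|_{C^{0,\alpha}}$ and $B := \|\Phi(g+h)\|_{C^{0,\alpha}}$, Lemma \ref{lem:h_1estimate} gives $\|h\|_{C^{2,\alpha}} \leq C(A+B)$ and $\|h_1\|_{C^{2,\alpha}} \leq C(A^2+B)$, while Lemma \ref{lem:k1bound} applied to $vg = \pi_{\mathcal{K}_1}(h)$ gives $\|v\|_{C^{2,\alpha}} \leq CA$. Shrinking $\eps$ so that $A, B \leq 1$, Young's inequality (e.g.\ $A^2 B \leq \tfrac{2}{3} A^3 + \tfrac{1}{3} B^3 \leq C(A^3 + B)$, $AB\leq B$, $A^4 \leq A^3$) absorbs each resulting monomial in $A$ and $B$ into $A^3 + B$, yielding the claimed bound. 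I anticipate no substantive obstacle here: the subtle point, namely solvability of the second-order correction inside $\mathcal{K}^\perp$, was already handled by Lemma \ref{lem:2nd-order-sol} and rests on the vanishing $\pi_{\mathcal{K}}\mathcal{D}^2\Phi(vg,vg) = 0$. The third-order obstruction of Corollary \ref{cor:formal-obstruction} is not needed at this step; it enters only at the next stage, where pairing a third-order Taylor expansion against $vg$ extracts the final cubic rigidity $\|h\|_{C^{2,\alpha}}^3 \lesssim \|\Phi(g+h)\|_{C^{0,\alpha}}$.
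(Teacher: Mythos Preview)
Your proposal is correct and follows the same approach as the paper: apply the elliptic Schauder estimate $\|h_2\|_{C^{2,\alpha}} \leq C\|Lh_2\|_{C^{0,\alpha}}$, invoke the second-order Taylor bound (\ref{eq:holder-2}), substitute the first-order estimates from Lemma \ref{lem:h_1estimate} and Lemma \ref{lem:k1bound}, and absorb higher-order terms. Your added justification that $h_2 \in \mathcal{K}^\perp$ (so that Lemma \ref{lem:elliptic-holder} gives the estimate without a kernel correction) is more explicit than what the paper writes, though you should note that orthogonality of $ug$ to $\mathcal{K}_0$ is handled implicitly by working on the gauge-fixed slice rather than by the $\tilde u \perp \ker(\Lap+2)$ condition alone.
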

\begin{proof}
Using Lemma \ref{lem:elliptic-holder}, and (\ref{eq:holder-2}), we have 
\begin{align*}
\|h_2\|_{C^{2,\alpha}} &\leq&& C\|Lh_2\|_{C^{0,\alpha}} \leq C'( \|\Phi(g+h)\|_{C^{0,\alpha}}+\|h\|^3_{C^{2,\alpha}}+\|h_1\|_{C^{2,\alpha}}^2 + \|h_1\|_{C^{2,\alpha}} \|v\|_{C^{2,\alpha}} )\\
&\leq && C''( \|\Phi(g+h)\|_{C^{0,\alpha}}+\|\Phi(g+h)\|_{C^{0,\alpha}}^3 +\|h\|^3_{C^{0,\alpha}})\\&&& + C''(\|\Phi(g+h)\|_{C^{0,\alpha}}^2 +\|h\|^4_{C^{0,\alpha}} +\|\Phi(g+h)\|_{C^{0,\alpha}}\|h\|_{C^{0,\alpha}}+\|h\|^3_{C^{0,\alpha}})&& .
\end{align*}
In the last line we have used Lemma \ref{lem:h_1estimate} for the $h,h_1$ terms and Lemma \ref{lem:k1bound} to estimate $\|v\|_{C^{2,\alpha}} \leq C \|h\|_{C^{0,\alpha}}$. Absorbing higher order terms then yields the result. 
\end{proof}

We also record estimates on $u,k_2$:

\begin{lemma}
\label{lem:k_2estimate}
There exist $C,\epsilon>0$ so that if $\|h\|_{C^{2,\alpha}}<\epsilon$ and $u,k_2$ are as above, then
	\begin{equation}
		\|u\|_{C^{2,\alpha}}+ \|k_2\|_{C^{2,\alpha}}\leq C\|h\|_{C^{0,\alpha}}^2,\end{equation}
\end{lemma}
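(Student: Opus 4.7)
The quantities $u$ and $k_2$ are defined by the inhomogeneous linear equation
\[ L(ug+k_2) = -\tfrac{1}{2}\mathcal{D}^2\Phi(vg,vg),\]
with $u=(\Lap+1)\tilde{u}$, $\tilde{u}\in \mathring{C}^\infty_g$ orthogonal to $\ker(\Lap+2)$, and $k_2\in \mathring{\mathcal{S}}^2_g$. The strategy is the standard two-step argument: apply the elliptic Schauder estimate Lemma~\ref{lem:elliptic-holder} to the combined tensor $ug+k_2$, then control the right-hand side via the uniform bound on $\mathcal{D}^2\Phi$ together with Lemma~\ref{lem:k1bound}.

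\textbf{Step 1: $ug+k_2$ is orthogonal to $\mathcal{K}$.} Recall that for our manifolds $\ker(\Lap_L+2)=0$, so $\mathcal{K}=\mathcal{K}_0+\mathcal{K}_1$. Since $k_2$ is transverse-traceless, it is $L^2$-orthogonal to both $\mathbb{R}g$ and $\im\mathcal{L}$, hence to $\mathcal{K}_0$; it is also orthogonal to $\mathcal{K}_1=\{v'g:v'\in\ker(\Lap+2)\}$ because $\tr k_2=0$. For $ug$, orthogonality to $\mathcal{K}_1$ follows by writing $u=(\Lap+1)\tilde u$ with $\tilde u \perp \ker(\Lap+2)$, and the condition $\tilde u\in\mathring{C}^\infty_g$ combined with the construction of $u$ in Lemma~\ref{lem:2nd-order-sol} places $ug$ in the complement of the symmetry directions $\mathcal{K}_0$ inside the conformal summand $(\Lap+1)\mathring{C}^\infty_g\cdot g$. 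Hence $\pi_\mathcal{K}(ug+k_2)=0$.

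\textbf{Step 2: Schauder and the right-hand side.} By Lemma~\ref{lem:elliptic-holder},
\[ \|ug+k_2\|_{C^{2,\alpha}} \leq C\|L(ug+k_2)\|_{C^{0,\alpha}} = \tfrac{C}{2}\|\mathcal{D}^2\Phi(vg,vg)\|_{C^{0,\alpha}}. \]
The uniform bound in Lemma~\ref{lem:shrinker-quantities-bound} gives $\|\mathcal{D}^2\Phi(vg,vg)\|_{C^{0,\alpha}}\leq C\|vg\|_{C^{2,\alpha}}^2\leq C'\|v\|_{C^{2,\alpha}}^2$. Since $vg=\pi_{\mathcal{K}_1}(h)$, Lemma~\ref{lem:k1bound} yields $\|v\|_{C^{2,\alpha}}\leq C\|h\|_{C^{0,\alpha}}$, so
\[ \|ug+k_2\|_{C^{2,\alpha}} \leq C\|h\|_{C^{0,\alpha}}^2. \]

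\textbf{Step 3: Separating $u$ and $k_2$.} Taking the trace of $ug+k_2$ and using $\tr_g k_2=0$ gives $\tr_g(ug+k_2)=nu$, so $\|u\|_{C^{2,\alpha}}\leq \tfrac{1}{n}\|\tr_g(ug+k_2)\|_{C^{2,\alpha}}\leq C\|ug+k_2\|_{C^{2,\alpha}}$. Then $\|k_2\|_{C^{2,\alpha}}\leq \|ug+k_2\|_{C^{2,\alpha}}+\|ug\|_{C^{2,\alpha}}\leq C'\|ug+k_2\|_{C^{2,\alpha}}$, and the two bounds together give the claimed estimate.

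The argument is essentially formal once Step 1 is verified; the only subtle point is checking that the constructed $ug+k_2$ indeed lies in the complement of $\mathcal{K}$ so that Lemma~\ref{lem:elliptic-holder} applies with full strength, but this is built into the definition of the second-order correction in Lemma~\ref{lem:2nd-order-sol}.
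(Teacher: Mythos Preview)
Your proof is correct and follows essentially the same approach as the paper's: apply the elliptic Schauder estimate Lemma~\ref{lem:elliptic-holder} to $ug+k_2$, then bound $\|\mathcal{D}^2\Phi(vg,vg)\|_{C^{0,\alpha}}$ via the uniform estimates on $\mathcal{D}^2\Phi$ and Lemma~\ref{lem:k1bound}. Your Steps~1 and~3 simply make explicit what the paper leaves implicit (that $ug+k_2$ lies in the complement of $\mathcal{K}$ so that $\pi_\mathcal{K}(ug+k_2)=0$, and how to pass from $\|ug+k_2\|_{C^{2,\alpha}}$ to the separate bounds on $u$ and $k_2$ via the trace).
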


\begin{proof}
	By the elliptic estimate Lemma \ref{lem:elliptic-holder}, we have 
	
		\[
		\|u\|_{C^{2,\alpha}}+\|k_2\|_{C^{2,\alpha}} \leq C\| L(ug+k_2)\|_{C^{2,\alpha}} \leq C' \| \mathcal{D}^2\Phi( vg,vg)\|_{C^{0,\alpha}}.\]
		The uniform bounds on $\mathcal{D}^2\Phi$ and Lemma \ref{lem:k1bound} then give \[\| \mathcal{D}^2\Phi( vg,vg)\|_{C^{0,\alpha}} \leq C\|v\|_{C^{2,\alpha}}^2 \leq C' \|h\|^2_{C^{0,\alpha}}.\] 
\end{proof}

\subsubsection{3rd order analysis}
By Lemma \ref{lem:taylor-exp-holder} with $k=3$, we have
\[
\|\Phi(g+h)-\Phi(g)-\mathcal{D}\Phi(h)-\frac{1}{2}\mathcal{D}^2\Phi(h,h)-\frac{1}{6}\mathcal{D}^3\Phi(h,h,h)\|_{C^{0,\alpha}}\leq C\|h\|^4_{C^{2,\alpha}}.
\]
Using our decomposition $h=vg+h_1$, $h_1 = ug+k_2+h_2$ this becomes
\[
	\|\Phi(g+h)-Lh_2-\frac{1}{2}\mathcal{D}^2\Phi(h_1,h_1)
	-\mathcal{D}^2\Phi(h_1,vg)-\frac{1}{6}\mathcal{D}^3\Phi(h,h,h)\|_{C^{0,\alpha}}\leq C\|h\|^4_{C^{2,\alpha}}.
\]
By the uniform estimates on $\mathcal{D}^k\Phi$, after expanding further we get 
\begin{equation}
\label{eq:holder-3}
\begin{split}
\| \Phi(g+h) - Lh_2&-\mathcal{D}^2\Phi(ug+k_2,vg)-\frac{1}{6}\mathcal{D}^3\Phi(vg,vg,vg)\|_{C^{0,\alpha}}\\&\leq C(\|h\|^4_{C^{2,\alpha}} + \|h_1\|_{C^{2,\alpha}}^2 + \|h_2\|_{C^{2,\alpha}} \|v\|_{C^{2,\alpha}}) \\&\quad + C(\|v\|_{C^{2,\alpha}}^2 \| h_1\|_{C^{2,\alpha}} + \|v\|_{C^{2,\alpha}} \|h_1\|_{C^{2,\alpha}}^2 + \|h_1\|_{C^{2,\alpha}}^3). 
\end{split}
\end{equation}

\begin{proof}[Proof of Theorem \ref{thm:quantitative rigidity}]
Decompose $h = vg + ug+k_2+h_2$ as above. By the formal obstruction Corollary \ref{cor:formal-obstruction} and equivalence of norms on the finite-dimensional space $\mathcal{K}_1 \subset\mathcal{K}$, we have \begin{equation}\begin{split} \|v\|_{C^{2,\alpha}}^3 &\leq C \| \pi_{\mathcal{K}}( \mathcal{D}^2\Phi(ug+h,vg)+ \frac{1}{6}\mathcal{D}^3\Phi(vg,vg,vg))\|_{C^{2,\alpha}} \\&\leq \| Lh_2 + \mathcal{D}^2\Phi(ug+k_2, vg) + \frac{1}{6}\mathcal{D}^3\Phi(vg,vg,vg) \|_{C^{0,\alpha}}. \end{split}\end{equation} In the second line we have used Lemma \ref{lem:k1bound} and that $\pi_\mathcal{K}(Lh_2)=0$ (by self-adjointness). Then by the triangle inequality and (\ref{eq:holder-3}), we have 
\begin{equation}
\begin{split}
\|v\|_{C^{2,\alpha}}^3 \leq &C(\|\Phi(g+h)\|_{C^{0,\alpha}} + \|h\|^4_{C^{2,\alpha}} + \|h_1\|_{C^{2,\alpha}}^2 + \|h_2\|_{C^{2,\alpha}} \|v\|_{C^{2,\alpha}}) \\&\quad + C(\|v\|_{C^{2,\alpha}}^2 \| h_1\|_{C^{2,\alpha}} + \|v\|_{C^{2,\alpha}} \|h_1\|_{C^{2,\alpha}}^2 + \|h_1\|_{C^{2,\alpha}}^3). 
\end{split}
\end{equation}

Using Lemmas \ref{lem:k1bound}, \ref{lem:h_1estimate} and \ref{lem:h_2estimate} this gives that 

\begin{equation*}
\begin{split}
\frac{1}{C} \|v\|_{C^{2,\alpha}}^3 \leq& \|\Phi(g+h)\|_{C^{0,\alpha}} + \|\Phi(g+h)\|^4_{C^{0,\alpha}} + \|h\|^4_{C^{0,\alpha}} + \|\Phi(g+h)\|^2_{C^{0,\alpha}} + \|h\|^4_{C^{0,\alpha}} \\& + \|h\|_{C^{0,\alpha}}(\|\Phi(g+h)\|_{C^{0,\alpha}} + \|h\|^3_{C^{0,\alpha}}) + \|h\|_{C^{0,\alpha}}^2 (\|\Phi(g+h)\|_{C^{0,\alpha}} + \|h\|^2_{C^{0,\alpha}} ) \\& + \|h\|_{C^{0,\alpha}}(\|\Phi(g+h)\|^2_{C^{0,\alpha}} + \|h\|^4_{C^{0,\alpha}} ) + \|\Phi(g+h)\|^3_{C^{0,\alpha}} + \|h\|^6_{C^{0,\alpha}}.
\end{split}
\end{equation*}

Absorbing higher order terms yields 
\begin{equation}
\|v\|_{C^{2,\alpha}}^3 \leq \|\Phi(g+h)\|_{C^{0,\alpha}} +  \|h\|^4_{C^{0,\alpha}} .
\end{equation}

Finally, to estimate $h = vg + ug+k_2+h_2$, we combine this with Lemmas \ref{lem:h_2estimate} and \ref{lem:k_2estimate} to find
\begin{equation}
\begin{split}
\|h\|_{C^{2,\alpha}} &\leq   \|v\|_{C^{2,\alpha}} + \|ug+k_2\|_{C^{2,\alpha}} + \|h_2\|_{C^{2,\alpha}} 
 \\&\leq C( \|\Phi(g+h)\|_{C^{0,\alpha}}^\frac{1}{3} +  \|h\|^\frac{4}{3}_{C^{0,\alpha}} + \|h\|_{C^{0,\alpha}}^2 + \|h\|_{C^{0,\alpha}}^3 + \|\Phi(g+h)\|_{C^{0,\alpha}}).
\end{split}
\end{equation}

Absorbing higher order terms again gives the desired estimate 
\begin{equation}
\|h\|_{C^{2,\alpha}} \leq C \|\Phi(g+h)\|_{C^{0,\alpha}}^\frac{1}{3}. 
\end{equation} 

\end{proof}

The proof of the quantitative rigidity theorem follows by combining Theorem \ref{thm:quantitative rigidity} and the slicing Lemma \ref{lem:slice}.

\begin{proof}[Proof of Theorem \ref{thm:main3}]
Let $\delta<\epsilon_1$, where $\epsilon_1$ is as in Theorem \ref{thm:quantitative rigidity}. Then by Lemma \ref{lem:slice}, there exist $c\in (1-\delta,1+\delta)$, $\psi \in \Diff(M)$ and $h\in\mathring{\mathcal{S}}^2_g \oplus (\Lap+1)\mathring{C}^\infty_g\cdot g$  so that $c\psi^* \tilde{g} = g+h$ and $\|h\|_{C^{2,\alpha}} <\epsilon_1$. By the symmetries of $\Phi$, we have $\|\Phi(c\psi^*\tilde{g})\|_{C^{0,\alpha}} \leq (1+\delta) \|\Phi(\tilde{g})\|_{C^{0,\alpha}}$. The result then follows from Theorem \ref{thm:quantitative rigidity}. 

\end{proof}

\appendix

\section{Variation of basic geometric quantities}
\label{sec:basic-var}

In this section we compute various variation formulae for the geometric quantities $\Ric_g, \Hess^g$ and their traces $R_g, \Lap^g$. 

\subsection{First variation}

Here we list the well-known first variations of the basic geometric quantities. 

\label{sec:basic-1st}

\begin{lemma}
\label{lem:basic-1st}
Consider a manifold $M$ with a 1-parameter family of metrics $g(s)$. Then at $s=0$, if $e_i$ is an orthonormal frame, we have 

\begin{equation}\Ric'_{ij} = -R_{ijkl} g'_{kl} + \frac{1}{2}(\nabla_i \nabla_k g'_{jk} + \nabla_j \nabla_k g'_{ik} + R_{ik} g'_{jk} + R_{jk}g'_{ik} - \Lap g'_{ij} - \nabla_i \nabla_j (\tr_g g').\end{equation}

\begin{equation} R' = -2 g'_{ij} R_{ij} + \div_g \div_g g'  + \langle \Ric, g'\rangle - \Lap (\tr_g g').\end{equation}

\begin{equation}\label{eq:1st-var-Hess}({\Hess'} \phi)_{ij} = -\frac{1}{2}(\nabla_i g'_{jk} + \nabla_j g'_{ik} -\nabla_k g'_{ij})\nabla_k \phi.\end{equation}

\begin{equation}\Lap' \phi = -g'_{ij}(\Hess \phi)_{ij} - \langle \div_g g', d\phi\rangle + \frac{1}{2} \langle \nabla \tr_g g', \nabla\phi\rangle.\end{equation}

\end{lemma}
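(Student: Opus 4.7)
The plan is to derive all four formulae from a single starting point: the first variation of the Christoffel symbols, which is the standard identity
\begin{equation}
\delta \Gamma^k_{ij} = \frac{1}{2} g^{kl}(\nabla_i g'_{jl} + \nabla_j g'_{il} - \nabla_l g'_{ij}).
\end{equation}
This follows by differentiating $\Gamma^k_{ij} = \frac{1}{2} g^{kl}(\partial_i g_{jl} + \partial_j g_{il} - \partial_l g_{ij})$ and working in normal coordinates (or an orthonormal frame parallel at a point), so that $\Gamma=0$ there and partial derivatives coincide with covariant derivatives; the resulting expression is then tensorial and holds globally. I will also use the standard identity $(g^{ij})' = -g^{ik} g^{jl} g'_{kl}$.

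The Hessian formula is the most direct: differentiating $(\Hess \phi)_{ij} = \partial_i \partial_j \phi - \Gamma^k_{ij} \nabla_k \phi$ in $s$ gives $(\Hess'\phi)_{ij} = -(\delta \Gamma^k_{ij}) \nabla_k \phi$, which is precisely (\ref{eq:1st-var-Hess}). The Laplacian formula then follows from $\Lap \phi = g^{ij} (\Hess \phi)_{ij}$ by the product rule: the $(g^{ij})'$ term yields $-g'_{ij} (\Hess \phi)_{ij}$, while the $\Hess'$ term, after substituting (\ref{eq:1st-var-Hess}) and relabelling indices, combines the three $\nabla g'$ terms into $-\langle \div_g g', d\phi\rangle$ from two of them and $\tfrac{1}{2}\langle \nabla \tr_g g', \nabla \phi\rangle$ from the third.

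The Ricci variation requires slightly more care. Starting from $R^l_{ikj} = \partial_i \Gamma^l_{kj} - \partial_k \Gamma^l_{ij} + \Gamma\cdot\Gamma$ terms, differentiation in normal coordinates gives $\Ric'_{ij} = \nabla_l (\delta \Gamma^l_{ij}) - \nabla_i (\delta \Gamma^l_{lj})$. Substituting the formula for $\delta \Gamma$ produces six terms involving $\nabla\nabla g'$. Commuting covariant derivatives to put them in the form stated introduces precisely the curvature corrections $R_{ijkl} g'_{kl}$ and the Ricci contractions $R_{ik} g'_{jk}$, $R_{jk} g'_{ik}$ that appear in the statement, with the trace $\tr_g g'$ appearing from the contraction $\delta \Gamma^l_{lj} = \tfrac{1}{2}\nabla_j (\tr_g g')$. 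Finally, the scalar curvature formula comes from $R' = (g^{ij})' R_{ij} + g^{ij} \Ric'_{ij}$; tracing the Ricci formula collapses the $\nabla\nabla g'$ terms into $\div_g \div_g g' - \Lap \tr_g g'$ and the curvature terms into $\langle \Ric, g'\rangle - 2\langle \Ric, g'\rangle$, yielding the stated form.

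The main bookkeeping obstacle is commuting covariant derivatives in the Ricci computation without losing track of signs or curvature contractions; this is where all the symmetry of the Riemann tensor gets used. Everything else is a clean symbolic manipulation once $\delta \Gamma$ and $(g^{ij})'$ are in hand.
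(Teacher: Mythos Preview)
Your derivation is the standard one and is correct. The paper itself does not actually prove this lemma: it introduces the result with ``Here we list the well-known first variations of the basic geometric quantities'' and simply states the four formulae without argument. So there is no paper proof to compare against; your sketch via $\delta\Gamma^k_{ij}$, the Palatini-type identity $\Ric'_{ij} = \nabla_l(\delta\Gamma^l_{ij}) - \nabla_i(\delta\Gamma^l_{lj})$, and the subsequent commutation of covariant derivatives is exactly the textbook route (cf.\ Besse or Topping), and it recovers the stated formulae. The only thing to be mindful of is the curvature sign convention when commuting $\nabla_l\nabla_i$ to $\nabla_i\nabla_l$ on $g'_{jl}$, since the paper's Riemann tensor convention is implicit; but as you note, this is bookkeeping rather than a conceptual issue.
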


\subsection{Conformal direction}

Here we consider higher order variations under conformal change. 

\label{sec:basic-conf}

\begin{lemma}
\label{lem:basic-conf}
Let $M$ be a manifold with the metric family $(1+sv)g$, where $v$ (and $\phi$) is a smooth function on $M$. Then we have the variation formulae

\begin{equation*}
\begin{split}
 \Ric_s &=   -\frac{n-2}{2} \Hess v - \frac{1}{2}(\Lap v)g,\\
 \Ric_{ss} &= (n-2)v\Hess v  +3 \frac{n-2}{2} dv\otimes dv + (v\Lap v  -\frac{n-4}{2} |dv|^2 )g,\\
 \Ric_{sss} &=-3(n-2)v^2 \Hess v -  9(n-2) vdv\otimes dv    - ( 3v^2 \Lap v +3(n-4) v|dv|^2)g,
\end{split}
\end{equation*}

\begin{equation*}
\begin{split}
R_s &=  -vR - (n-1)\Lap v ,\\
R_{ss} &= 2v^2R + 4(n-1) v\Lap v - \frac{(n-1)(n-6)}{2}|dv|^2 ,\\
R_{sss} &=  -6v^3 R - 18(n-1) v^2 \Lap v + \frac{9}{2}(n-1)(n-6)v|dv|^2,
\end{split}
\end{equation*}

\begin{equation*}
\begin{split}
\Hess_s \phi &=  \frac{1}{2}(-dv\otimes d\phi  - d\phi\otimes dv + \langle d\phi,dv\rangle g) ,\\
\Hess_{ss} \phi &= -v(-dv\otimes d\phi  - d\phi\otimes dv + \langle d\phi,dv\rangle g) ,\\
\Hess_{sss} \phi &=  3v^2(-dv\otimes d\phi  - d\phi\otimes dv + \langle d\phi,dv\rangle g),
\end{split}
\end{equation*}

\begin{equation*}
\begin{split}
\Lap_s \phi &= -v\Lap \phi + \frac{n-2}{2} \langle dv,d\phi\rangle ,\\
\Lap_{ss} \phi &= 2v^2\Lap \phi -2(n-2)\langle d\phi,dv\rangle  ,\\
\Lap_{sss} \phi &= -6v^3\Lap \phi + 9(n-2)v^2\langle d\phi,dv\rangle.
\end{split}
\end{equation*}

\end{lemma}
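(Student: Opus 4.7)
The plan is to obtain closed-form expressions for $\Hess^{g_s}\phi$, $\Lap^{g_s}\phi$, $\Ric(g_s)$ and $R(g_s)$ at the conformal metric $g_s = (1+sv)g$, and then read off the $k$th variations as Taylor coefficients in $s$. All four formulas then reduce to combining two elementary series expansions, namely
\[
\frac{s}{1+sv} = s - s^2 v + s^3 v^2 + O(s^4), \qquad \frac{s^2}{(1+sv)^2} = s^2 - 2s^3 v + O(s^4).
\]

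First I would record the standard conformal change identities. For any positive function $u$ on $M$ with $\tilde g = u g$, the Christoffel difference
\[
\tilde\Gamma^k_{ij} - \Gamma^k_{ij} = \frac{1}{2u}\bigl(\delta^k_i\nabla_j u + \delta^k_j\nabla_i u - g_{ij}\nabla^k u\bigr)
\]
immediately yields
\[
\tilde\Hess_{ij}\phi = \Hess_{ij}\phi - \frac{1}{2u}\bigl(\nabla_i u\,\nabla_j\phi + \nabla_j u\,\nabla_i\phi - g_{ij}\langle\nabla u,\nabla\phi\rangle\bigr),
\]
and, tracing with $\tilde g^{ij} = u^{-1}g^{ij}$,
\[
\tilde\Lap\phi = \frac{1}{u}\Lap\phi + \frac{n-2}{2u^2}\langle\nabla u,\nabla\phi\rangle.
\]
For the curvature, writing $u = e^{2\sigma}$ and applying the classical conformal formula for Ricci gives, after converting $\sigma$-derivatives back to $u$-derivatives,
\[
\tilde\Ric_{ij} = \Ric_{ij} - \frac{n-2}{2u}\Hess_{ij}u + \frac{3(n-2)}{4u^2}\nabla_i u\,\nabla_j u - \left(\frac{\Lap u}{2u} + \frac{n-4}{4u^2}|\nabla u|^2\right)g_{ij},
\]
and tracing produces
\[
\tilde R = \frac{1}{u}R - \frac{n-1}{u^2}\Lap u - \frac{(n-1)(n-6)}{4u^3}|\nabla u|^2,
\]
both of which can be cross-checked by taking $\tr_{\tilde g}$ of the Ricci expression.

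Next I would specialize to $u = 1+sv$, so $\nabla u = s\nabla v$, $\Hess u = s\Hess v$, $\Lap u = s\Lap v$, and $|\nabla u|^2 = s^2|\nabla v|^2$. Plugging into the four identities above, each expression becomes a polynomial in $s$ times rational functions of the form $(1+sv)^{-k}$, which expand via the displayed series. Collecting the coefficients of $s$, $s^2$, and $s^3$ (and multiplying by $1$, $2!$, $3!$ respectively) gives the stated expressions for $\Hess_s,\Hess_{ss},\Hess_{sss}$; $\Lap_s,\Lap_{ss},\Lap_{sss}$; $\Ric_s,\Ric_{ss},\Ric_{sss}$; and $R_s,R_{ss},R_{sss}$. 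One can sanity-check at each order that tracing the Ricci coefficient recovers the scalar curvature coefficient.

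There is no real obstacle here beyond bookkeeping: the only care needed is to keep track of which contributions come from the numerator's derivatives of $u$ and which come from expanding $(1+sv)^{-k}$, and to recognize that the common tensor factor $-\nabla v\otimes\nabla\phi - \nabla\phi\otimes\nabla v + \langle\nabla v,\nabla\phi\rangle g$ appearing in each $\Hess$ variation is just $\tilde\Hess\phi - \Hess\phi$ scaled by the Taylor coefficients of $s/(1+sv)$. The Ricci expansion to third order is the most involved, but it is a direct polynomial computation once the closed form above is in hand.
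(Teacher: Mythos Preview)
Your proposal is correct and follows essentially the same approach as the paper: both record the standard closed-form conformal change identities and then Taylor expand in $s$. The only cosmetic difference is that the paper parametrises the conformal factor as $e^{2\psi}=1+sv$ and expands $\psi=\tfrac12\log(1+sv)$, whereas you work directly with $u=1+sv$ and expand $(1+sv)^{-k}$; the two computations are related by the obvious substitution and lead to the same coefficients.
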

\begin{proof}

The relevant quantities are given explicitly under conformal variation $\wt{g}= e^{2\psi} g$ by

\[\wt{\Ric} = \Ric - (n-2)(\Hess \psi - d\psi\otimes d\psi) - (\Lap \psi + (n-2) |d\psi|^2)g,\] 
\[\wt{R} = e^{-2\psi}( R - 2(n-1)\Lap \psi -(n-1)(n-2) |d\psi|^2),\]
\[\wt{\Hess}f = \Hess f - d\psi \otimes df - df\otimes d\psi + \langle \nabla f,\nabla \psi\rangle g,\] 
\[ \wt{\Lap} f = e^{-2\psi}(\Lap f + (n-2)\langle \nabla \psi,\nabla f\rangle).\] 

The above variation formulae follow by setting $e^{2\psi} = (1+sv)$ and expanding in $s$ to third order. Note that $\phi = \frac{1}{2} \log(1+sv) = \frac{s}{2}v - \frac{s^2}{4}v^2 + \frac{s^3}{6}v^3 - \cdots$. 

\end{proof}

\subsubsection{Second variation cross term}
\label{sec:conf-var-cross}

\begin{lemma}
Let $M$ be a manifold with the 2-parameter family of metrics $(1+sv+tu)g$, where $u,v$ are smooth functions on $M$. Then we have
\[ \Ric_{st} = \frac{3(n-2)}{4}(du\otimes dv + dv\otimes du) + \frac{n-2}{2}(u\Hess v + v\Hess u) + \frac{1}{2}(u\Lap v+ v\Lap u)g - \frac{n-4}{2}\langle du,dv\rangle g,\]
\[\Hess_{st}\phi = \frac{1}{2}(d(uv)\otimes d\phi + d\phi \otimes d(uv) -\frac{1}{2}\langle d\phi,d(uv)\rangle g,\]
\[R_{st} = 2uvR + 2(n-1)(u\Lap v+v\Lap u) - \frac{(n-1)(n-6)}{2}\langle du,dv\rangle.\] 
\end{lemma}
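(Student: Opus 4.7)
The plan is to exploit the closed-form conformal transformation laws stated in the proof of Lemma~\ref{lem:basic-conf} and simply apply the mixed partial $\partial_s\partial_t$ at $(s,t)=(0,0)$ to the two-parameter family $\tilde{g} = e^{2\psi}g$ with $e^{2\psi} = 1+sv+tu$, i.e.\ $\psi = \tfrac{1}{2}\log(1+sv+tu)$. The first step is to compute the needed derivatives of $\psi$ at the origin: $\psi_s|_0 = v/2$, $\psi_t|_0 = u/2$, and $\psi_{st}|_0 = -uv/2$. These three quantities, together with the pointwise product rule identities $\Hess(uv) = u\Hess v + v\Hess u + du\otimes dv + dv\otimes du$ and $\Lap(uv) = u\Lap v + v\Lap u + 2\langle du,dv\rangle$, will drive every computation.

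For $\Ric_{st}$, I would expand
\[\tilde{\Ric} = \Ric - (n-2)(\Hess\psi - d\psi\otimes d\psi) - (\Lap\psi + (n-2)|d\psi|^2)\,g\]
and extract two kinds of contributions at $(0,0)$: the linear-in-$\psi$ terms evaluated at $\psi_{st}|_0=-uv/2$, and the bilinear-in-$\psi$ terms where only the cross product $\psi_s\psi_t$ survives (contributing $\tfrac{1}{4}(du\otimes dv+dv\otimes du)$ from $d\psi\otimes d\psi$ and $\tfrac12\langle du,dv\rangle$ from $|d\psi|^2$). Collecting terms and using the product rule identities above yields the stated formula.

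For $\Hess_{st}\phi$ the computation is even simpler, since $\tilde{\Hess}\phi = \Hess\phi - d\psi\otimes d\phi - d\phi\otimes d\psi + \langle\nabla\phi,\nabla\psi\rangle g$ is \emph{linear} in $\psi$; only the $\psi_{st}|_0$ contribution survives, and substituting $d\psi_{st}|_0 = -\tfrac12 d(uv)$ gives the result. For $R_{st}$ I would write $\tilde{R} = e^{-2\psi}F$ with $F = R - 2(n-1)\Lap\psi - (n-1)(n-2)|d\psi|^2$, then apply the bivariate Leibniz rule:
\[\tilde{R}_{st}|_0 = (e^{-2\psi})_{st}|_0\, F|_0 + (e^{-2\psi})_s|_0\, F_t|_0 + (e^{-2\psi})_t|_0\, F_s|_0 + F_{st}|_0.\]
The first term produces $2uv\cdot R$ (using $(e^{-2\psi})_{st}|_0 = 4\psi_s\psi_t|_0 - 2\psi_{st}|_0 = 2uv$), the two cross terms produce additional $(n-1)(u\Lap v+v\Lap u)$ contributions, and $F_{st}|_0$ contributes the remaining Laplacian and inner-product terms via the same expansion of $\Lap(uv)$ and $|d\psi|^2$.

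There is no analytic obstacle here---the whole lemma is formal differentiation of known conformal identities. The only place one must take care is in the bookkeeping for $\Ric_{st}$, where the inner-product term $\langle du,dv\rangle$ receives contributions of three different origins (from $\Hess\psi_{st}$, from $d\psi\otimes d\psi$, and from $|d\psi|^2$) whose coefficients must be summed with the correct signs to produce the final $-\tfrac{n-4}{2}$ coefficient; getting this combination right is essentially the entire content of the proof.
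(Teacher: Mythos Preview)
Your proposal is correct and essentially matches the paper's approach. The paper's one-line proof (``Polarise the second variation formulae in the previous lemma'') amounts to reading off the bilinear form from the already-computed $\Ric_{ss}$, $\Hess_{ss}$, $R_{ss}$, whereas you instead differentiate the conformal transformation laws directly in the two-parameter family; since those $ss$-formulae were themselves obtained in Lemma~\ref{lem:basic-conf} by expanding $e^{2\psi}=1+sv$, the two computations are the same in content, only differing in whether one computes the diagonal first and then polarizes or goes straight to the mixed partial.
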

\begin{proof}
Polarise the second variation formulae in the previous lemma. 
\end{proof}

\subsection{Second variation in kernel-TT direction}

\label{sec:basic-ker-TT}

Here we compute the basic variations needed in Section \ref{sec:2var-ker-tt}, by considering the 2-parameter variation $(1+sv)(g+th)$. This will allow us to use the variation formulae under conformal change, although now $g_{st}=vh$ so for any quantity $Q$ we will have $Q_{st} = \mathcal{D}^2Q(vg,h) + \mathcal{D}Q(vh)$. 

\begin{lemma}
Let $(M,g)$ be a closed 1-Einstein manifold, and $v\in \ker(\Lap+2)$ and $h\in \mathring{\mathcal{S}}^2_g$. Then for any smooth function $u$ on $M$, we have
\[ \langle \mathcal{D}^2\Ric(vg,h), ug\rangle_{L^2} = \frac{n-2}{2} \int u\langle h,\Hess v\rangle w\d V_g,\]

\[\mathcal{D}^2 R(vg,h) = (n-2)\langle h,\Hess v\rangle.\] 
\end{lemma}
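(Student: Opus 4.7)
As indicated in the paragraph preceding the lemma, the plan is to exploit the two-parameter family $\wt g(s,t)=(1+sv)(g+th)$, at which $g_{st}=vh$. For any geometric quantity $Q$ depending on the metric, the chain rule then gives
\[ \wt Q_{st}\big|_{(0,0)} \;=\; \mathcal D^2Q(vg,h) + \mathcal DQ(vh),\]
so it suffices to compute $\wt Q_{st}$ and $\mathcal DQ(vh)$ separately for $Q=R$ and $Q=\Ric$, then trace against $ug$ at the end. The advantage of this particular two-parameter family is that for each fixed $t$, $\wt g(\cdot,t)$ is a pure conformal deformation of $(g+th)$, so we may apply the conformal formulas from Lemma \ref{lem:basic-conf} with base metric $g+th$.

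\textbf{Scalar curvature step.} With $e^{2\psi}=1+sv$ and base metric $g+th$, the conformal formula gives
\[ \wt R\big|_{s=0}=R(g+th),\qquad \wt R_s\big|_{s=0}=-v\,R(g+th)-(n-1)\Lap^{g+th}v.\]
Differentiating in $t$ at $t=0$ and using Lemma \ref{lem:basic-1st} with $g'=h$ TT, we get $R_t|_{t=0}=0$ and $\Lap_t v=-\langle h,\Hess v\rangle$. Hence $\wt R_{st}=(n-1)\langle h,\Hess v\rangle$. On the other hand, applying the first variation of $R$ to the tensor $vh$ (which is still trace-free) and noting that $\div(vh)_j=h_{ij}\nabla_i v$ since $h$ is divergence-free, gives $\div\div(vh)=\langle h,\Hess v\rangle$, hence $\mathcal DR(vh)=\langle h,\Hess v\rangle$. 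Subtracting yields $\mathcal D^2R(vg,h)=(n-2)\langle h,\Hess v\rangle$.

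\textbf{Ricci step.} From the conformal formula with base $g+th$, at $s=0$ we have
\[ \wt\Ric_s=-\tfrac{n-2}{2}\Hess^{g+th}v-\tfrac{1}{2}(\Lap^{g+th}v)(g+th).\]
Differentiating in $t$ at $t=0$, using $\Lap v=-2v$ (since $v\in\ker(\Lap+2)$), $\Lap_tv=-\langle h,\Hess v\rangle$, and the first variation (\ref{eq:1st-var-Hess}) for $\Hess_tv$, produces an explicit tensor $\wt\Ric_{st}$. To finish, I only need its trace: using that $h$ is both trace-free and divergence-free, the trace of (\ref{eq:1st-var-Hess}) gives $\tr_g\Hess_tv=-(\div h)(v)+\tfrac12\langle\nabla\tr h,\nabla v\rangle=0$, so
\[ \tr_g\wt\Ric_{st}=\tfrac{n}{2}\langle h,\Hess v\rangle+v\tr_g h=\tfrac{n}{2}\langle h,\Hess v\rangle.\]
Since $\tr_g\mathcal D\Ric(vh)=\mathcal DR(vh)=\langle h,\Hess v\rangle$ by the scalar step, we conclude $\tr_g\mathcal D^2\Ric(vg,h)=\tfrac{n-2}{2}\langle h,\Hess v\rangle$. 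Pairing against $ug$ in the weighted $L^2$ inner product and integrating then gives the stated identity.

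\textbf{Main obstacle.} There is no substantial obstacle: the computations are routine once the two-parameter viewpoint is adopted. The only step that requires a brief check is that both terms which might \emph{a priori} contribute to $\tr_g\Hess_t v$ vanish, but this is immediate from the TT hypothesis. All other cancellations are forced by $\Ric=g$ and by the assumption $\Lap v=-2v$ (which appears only through $\Lap v$ multiplying $h$ in the $\wt\Ric_{st}$ term, and that contribution is trace-free against $ug$ since $\tr_g h=0$).
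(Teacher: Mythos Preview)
Your proof is correct and follows essentially the same strategy as the paper: both use the two-parameter family $(1+sv)(g+th)$, invoke the conformal variation formulae with base $g+th$, and then subtract off $\mathcal DQ(vh)$. The only cosmetic differences are that you compute $\mathcal D^2R$ first (directly from the conformal formula for $R$) and then reuse it via $\tr_g\mathcal D\Ric(vh)=\mathcal DR(vh)+\langle vh,\Ric\rangle=\mathcal DR(vh)$, whereas the paper computes $\langle\mathcal D\Ric(vh),ug\rangle$ from the first-variation formula for $\Ric$ and obtains $R_{st}$ from a general $(\tr T)_{st}$ identity; also, you correctly assert only $\tr_g\Hess_t v=0$ rather than the paper's (slightly sloppy, though harmless after tracing) claim ``$\Hess_t=0$''.
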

\begin{proof}
First, by the conformal variation Lemma \ref{lem:basic-conf}, at $s=0$ we have \[\Ric_s = -\frac{n-2}{2} \Hess^{g+th}v - \frac{1}{2} (\Lap^{g+th}v)(g+th).\] Since $\tr h =0,\div h=0$, by the general first variation Lemma \ref{lem:basic-1st} we have $\Hess_t=0$ and hence $\Lap_t \phi = -\langle h, \Hess \phi\rangle$. So differentiating in $t$ now, at $(s,t)=(0,0)$ we have \[\Ric_{st} = \frac{1}{2} \langle h,\Hess v\rangle g - \frac{1}{2}(\Lap v)h.\]

This implies \[\langle \Ric_{st}, ug\rangle_{L^2} = \int u\left( \frac{n}{2}\langle h,\Hess v\rangle - \frac{1}{2}(\Lap v)\tr h\right)w \d V_g = \frac{n}{2} \int u\langle h,\Hess v\rangle w\d V_g.\] By Lemma \ref{lem:basic-1st} again we have \[\begin{split}\langle \mathcal{D}\Ric(vh), ug\rangle_{L^2} &= -\int vu\langle \Ric,h\rangle w\d V_g+  \int u ( \div\div(vh) + v\langle \Ric,h\rangle - \Lap(\tr vh)) w\d V_g \\&= \frac{1}{2} \int u\langle h,\Hess v\rangle w \d V_g,\end{split}\] 
where we used that the initial metric satisfies $\Ric=g$. 
Therefore
\[ \langle \mathcal{D}^2\Ric(vg,h), ug\rangle_{L^2} = \langle \Ric_{st}-\mathcal{D}\Ric(vh), ug\rangle_{L^2} = \frac{n-2}{2} \int u\langle h,\Hess v\rangle w\d V_g.\] 

Now for any 2-tensor $T$, in an orthonormal frame at $(s,t)=(0,0)$ we have \[(\tr T)_{st} = \tr_g(T_{st})-\langle g_t, T_s\rangle - \langle g_s, T_t\rangle - \langle g_{st},T\rangle + 2(g_s)_{ij} (g_t)_{ik} T_{kj}.\] 

In particular this gives \[R_{st} = \frac{n}{2} \langle h,\Hess v\rangle - \frac{1}{2} (\Lap v)\tr h - \langle h, -\frac{\Lap v}{2} g - \frac{n-2}{2}\Hess v\rangle - v \tr \Ric_t - \langle vh,\Ric\rangle + 2v \langle h,\Ric\rangle.\] Again using that $\Ric=g$ and $\tr h=0,\div g=0$, we get $R_{st} = \frac{2n-2}{2} \langle h,\Hess v\rangle$. By Lemma \ref{lem:basic-1st} we have $\mathcal{D}R(vh) = \div\div(vh) = \langle h,\Hess v\rangle$, therefore 

\[\mathcal{D}^2 R(vg,h) = R_{st} - \mathcal{D}R(vh) = (n-2)\langle h,\Hess v\rangle.\] 


\end{proof}

\section{Uniform bounds for shrinker quantities}
\label{sec:shrinker-quantities-bound}

Here we prove Lemma \ref{lem:shrinker-quantities-bound}, which is reproduced below. Recall that $T_1 * T_2$ denotes an unspecified contraction of the tensors $T_1,T_2$, and $\|T_1 * T_2 \|_{C^{k,\alpha}} \leq \|T_1\|_{C^{k,\alpha}} \|T_2\|_{C^{k,\alpha}}$. 

\begin{lemma}[= Lemma \ref{lem:shrinker-quantities-bound}]
Fix a reference soliton metric $\Phi(g_0)=0$. There exists $C_k, \epsilon>0$ such that if $\|g-g_0\|_{C^{2,\alpha}} <\epsilon$, then for any $k$ the variations at $g$ satisfy
\begin{equation}  \|\mathcal{D}^k\nu_g\| + \|\mathcal{D}^k\tau_g\|+\|\mathcal{D}^k f_g\|_{C^{2,\alpha}}+\|\mathcal{D}^k \Phi_g\|_{C^{0,\alpha}} \leq C_k ,\end{equation}
\end{lemma}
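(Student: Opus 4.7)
The plan is to prove the lemma by induction on $k$, closely following the order $\nu\to\tau\to f\to\Phi$ used in the variational computations of Section \ref{sec:shrinker-variation}. The base case $k=0$ follows from \cite[Lemma 4.1]{Kr3}, which provides $C^{2,\alpha}$-continuous (in fact analytic) dependence of $(f_g,\tau_g,\nu(g))$ on $g$ in a $C^{2,\alpha}$-neighbourhood of $g_0$; the explicit formula $\Phi(g)=\tau^g(\Ric(g)+\Hess^g f_g)-g/2$ then yields $C^{0,\alpha}$-continuity of $\Phi$ on the same neighbourhood.

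For the inductive step, fix $k\geq 1$ and assume the estimates hold for all $j<k$. By polarisation it suffices to bound $\left.\frac{d^k}{dt^k}\right|_{t=0}X(g+th)$, which I denote $X^{(k)}$, uniformly for $h\in C^{2,\alpha}(\mathcal{S}^2(M))$ with $\|h\|_{C^{2,\alpha}}\leq 1$ and $X\in\{\nu,\tau,f,\Phi\}$. First, differentiating the first variation formula $\mathcal{D}\nu_g(h)=-\int\langle\Phi(g),h\rangle w^g\,\d V_g$ an additional $k-1$ times in $t$ expresses $\nu^{(k)}$ as a sum of integrals of pointwise products of $\mathcal{D}^j\Phi$, $\mathcal{D}^jw^g$ and $\mathcal{D}^j(\d V_g)$ for $j\leq k-1$; each factor is uniformly bounded by the inductive hypothesis (with $w^g, \d V_g$ depending smoothly on $f_g^{(j)}, \tau_g^{(j)}, g$), giving the bound on $|\nu^{(k)}|$. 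Next, differentiating the normalisations $\int w^g\,\d V_g=1$ and (\ref{eq:EL-nu2}) each $k$ times in $t$ produces a pair of scalar identities linear in $\tau^{(k)}, \nu^{(k)}$ and $\int f_g^{(k)} w^g\,\d V_g$; eliminating the integral of $f^{(k)}$ between the two yields an explicit algebraic formula for $\tau^{(k)}$ in terms of $\nu^{(k)}$ and lower-order data, yielding the bound on $|\tau^{(k)}|$.

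To bound $f^{(k)}$, differentiate (\ref{eq:EL-nu}) $k$ times to obtain an elliptic equation of the form $(2\tau^g\Lap^g + 1)f^{(k)} = F$, where $F\in C^{0,\alpha}$ depends only on $\tau^{(k)}, \nu^{(k)}$ (just bounded) together with variations of order strictly less than $k$. At $g_0$ this operator reduces to $\Lap^{g_0}+1$, which is invertible on $C^{2,\alpha}(M)$ by the Lichnerowicz bound $\lambda_1(\Lap^{g_0})>1$ (cf.\ Section \ref{sec:prelim}); uniform invertibility throughout the $C^{2,\alpha}$-neighbourhood follows by continuity of the coefficients, and standard Schauder estimates then deliver the desired $C^{2,\alpha}$-bound on $f^{(k)}$. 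Finally, $\Phi^{(k)}$ is bounded in $C^{0,\alpha}$ by direct $k$-fold differentiation of $\Phi^g = \tau^g(\Ric^g + \Hess^g f_g) - g/2$: each summand is a product of factors of the form $\tau^{(i_1)}$, $\mathcal{D}^{i_2}\Ric(h^{\otimes i_2})$, $\mathcal{D}^{i_3}(\Hess\cdot)(h^{\otimes i_3})$ and $f^{(i_4)}$ with $i_1+\cdots+i_4\leq k$, all uniformly bounded in $C^{0,\alpha}$ by the preceding steps and the standard variation formulae of Appendix \ref{sec:basic-var}.

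The main obstacle will be the combinatorial bookkeeping required to disentangle the highest-order unknowns $\tau^{(k)}, f^{(k)}, \nu^{(k)}$ from the lower-order inhomogeneous data when applying the Leibniz rule to the nonlinear terms of (\ref{eq:EL-nu}) (notably the $|df|^2$ term and the $\tau$-prefactor) and to the weight $w^g=(4\pi\tau^g)^{-n/2}e^{-f_g}$. This is a standard but delicate accounting exercise; once one verifies that no hidden $f^{(k)}$-dependence creeps into $F$ beyond what is already accounted for by the highest derivative of the $-2\tau\Lap f$ term, the $\Lap^{g_0}+1$-invertibility closes the induction.
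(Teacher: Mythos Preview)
Your approach is the same inductive scheme as the paper's, in the same order $\nu\to\tau\to f\to\Phi$, and is essentially correct. There is one genuine oversight, however, in your treatment of $f^{(k)}$. Differentiating the $|df|^2$ term in (\ref{eq:EL-nu}) $k$ times does not only contribute lower-order data: at top order it produces $2\langle df_g,\,df^{(k)}\rangle$, a first-order term in $f^{(k)}$. Hence the elliptic operator acting on $f^{(k)}$ is not $2\tau^g\Lap^g+1$ but the drift operator $2\tau^g\Lap_f^g+1$, where $\Lap_f=\Lap-\langle df_g,d(\cdot)\rangle$. Your final paragraph explicitly asserts that ``no hidden $f^{(k)}$-dependence creeps into $F$ beyond \dots the $-2\tau\Lap f$ term'', which is precisely where this drift contribution is missed.

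In the Einstein case you have in mind this is easily repaired: since $f_{g_0}$ is constant, $df_g$ is $C^{1,\alpha}$-small for $g$ near $g_0$, so the drift term can be absorbed as a small perturbation of your operator and the Lichnerowicz bound $\lambda_1(\Lap^{g_0})>1$ still yields uniform invertibility. The paper takes a slightly cleaner route that also covers non-Einstein reference solitons: it keeps the drift and invokes the general spectral fact (from \cite{CZ}) that $\tfrac{1}{2\tau^{g_0}}\notin\spec\Lap_f^{g_0}$ for any compact shrinker, then perturbs. Either fix closes your induction.
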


\begin{proof}
We proceed by induction on $k$. For brevity we abbreviate $\mathbf{h}= (h_1,\cdots,h_k)$ and $\|\mathbf{h}\|^k = \prod_{i=1}^k \|h_i\|$ . The main idea is that we can estimate derivatives of $\tau,f$ by differentiating their defining equations in the $\mathbf{h}$ direction and focussing on the highest order terms; all other terms will be estimated by the inductive hypothesis. 

The case $k=0$ is clear as each quantity depends smoothly on $g$. So suppose that \[\|\mathcal{D}^j\nu_g\| + \|\mathcal{D}^j\tau_g\|+\|\mathcal{D}^j f_g\|_{C^{2,\alpha}}+\|\mathcal{D}^j \Phi_g\|_{C^{0,\alpha}} \leq C_j\] for all $j<k$. For a tensor quantity $T$ we denote an unspecified linear combination of contractions of copies of $\mathcal{D}^\cdot T$ with total order $j$ by
 \[(*\mathcal{D})^jT= \sum_{j_1+\cdots + j_\alpha=j} \mathcal{D}^{j_1}T * \cdots * \mathcal{D}^{j_l}T.\] In particular, if $j_1+j_2+j_3= j<k$, $\mathbf{h}'\subset\mathbf{h}$ then by the inductive hypothesis
 \[  \left\| \left((*\mathcal{D})^{j_1}\tau * (*\mathcal{D})^{j_2} f * (*\mathcal{D})^{j_3}\Phi\right)( \mathbf{h}') \right\|_{C^{0,\alpha}} \leq C_j \|\mathbf{h}'\|^j_{C^{2,\alpha}}.\] 

Recall that $\mathcal{D}\nu_g (h) = -\int \langle \Phi(g),h\rangle w^g \d V_g$, where $w=(4\pi\tau)^{-n/2} e^{-f}$. Differentiating this further, we find that \[ \mathcal{D}^k\nu_g(\mathbf{h}) = - \int  w\d V_g \left( h_1 * \left( \sum_{k_1+k_2+k_3=k-1} (*\mathcal{D})^{k_1}\tau * (*\mathcal{D})^{k_2} f * (*\mathcal{D})^{k_3}\Phi\right)(\mathbf{h}') \right) ,\] where $\mathbf{h}' = (h_2,\cdots,h_k)$. It follows immediately that $| \mathcal{D}^k\nu_g(\mathbf{h})| \leq C \|\mathbf{h}\|_{C^{2,\alpha}}^k$. 

By differentiating the normalisation $\int w^g \d V_g=1$, we have \[-\frac{n}{2\tau} \mathcal{D}^k\tau(\mathbf{h}) - \int \mathcal{D}^k f(\mathbf{h}) w\d V_g = \int w \d V_g\left(\sum_{\substack{k_1+k_2=k \\ k_1,k_2<k}} (*\mathcal{D})^{k_1}\tau * (*\mathcal{D})^{k_2}f\right)(\mathbf{h}).\] Similarly by differentiating $\int f^gw^g \d V_g=\frac{n}{2}+\nu(g)$, we have \[\begin{split} -\frac{n}{2\tau} \mathcal{D}^k\tau (\mathbf{h}) +& \left(1-\nu -\frac{n}{2}\right) \int \mathcal{D}^k f(\mathbf{h}) w \d V_g \\&= \mathcal{D}^k\nu(\mathbf{h})+ \int w \d V_g\left( \sum_{\substack{k_1+k_2=k \\ k_1,k_2<k}} (*\mathcal{D})^{k_1}\tau *  (*\mathcal{D})^{k_2} f \right)(\mathbf{h}).\end{split}\] 
Eliminating $\int \mathcal{D}^k f(\mathbf{h}) w\d V_g$, we then have \[ \frac{\frac{n}{2}+\nu-2}{\frac{2\tau}{n}} \mathcal{D}^k\tau(\mathbf{h})  = \mathcal{D}^k\nu(\mathbf{h})+ \int w \d V_g\left( \sum_{\substack{k_1+k_2=k \\ k_1,k_2<k}} (*\mathcal{D})^{k_1}\tau *  (*\mathcal{D})^{k_2} f \right)(\mathbf{h}) .\] It follows that  $|\mathcal{D}^k\tau_g(\mathbf{h})| \leq  C \|\mathbf{h}\|_{C^{2,\alpha}}^k$. 

Recall the defining equation (\ref{eq:EL-nu}) for $f$, which states \[\tau^g(-2\Lap^g f^g + |df^g|_g^2 -R_g) - f^g+n +\nu(g)=0.\] All the basic geometric quantities involve at most the 2-jet of $g$. Let $\Lap_f:= \Lap - \langle df, d(\cdot)\rangle$ denote the drift Laplacian. 
By differentiating the defining equation, we will have \[\begin{split} (\Lap_f+\frac{1}{2\tau}) \left(\mathcal{D}^k f(\mathbf{h})\right)  =& \frac{1}{2\tau} \mathcal{D}^k\nu(\mathbf{h}) +\frac{1}{2\tau} \mathcal{D}^k\tau(\mathbf{h}) (-2\Lap f + |df|^2-R) \\&+ \sum_{\substack{\mathbf{h}' \cup \mathbf{h}'' = \mathbf{h} \\ \# \mathbf{h}' = k_1 \\ \# \mathbf{h}''=k_2} }
(*\mathcal{J}_2)(\mathbf{h}') *\left( \sum_{\substack{j_1+j_2= k_2\\ j_2<k}} (*\mathcal{D})^{j_1}\tau * (*\mathcal{D})^{j_2} \mathcal{J}_2(f)\right)(\mathbf{h}'').\end{split}\] Here $\mathcal{J}_2(f)$, $\mathcal{J}_2(h)$ denote the 2-jets of $f$ and $h$ (with respect to the reference metric $g_0$ on $M$), $(*\mathcal{J}_2)(h_1,\cdots, h_j) = \mathcal{J}_2(h_1) \cdots * \mathcal{J}_2(h_j)$ and the sum is over all partitions of $\mathbf{h}$. 

By the previous estimates, we then have 
\[\begin{split}\|(\Lap_f+\frac{1}{2\tau}) &\left(\mathcal{D}^k f( \mathbf{h})\right)\|_{C^{0,\alpha}} \\&\leq C(|\mathcal{D}^k\nu(\mathbf{h})|  + | \mathcal{D}^k\tau(\mathbf{h})|) \\&\quad+C\sum_{\substack{\mathbf{h}' \cup \mathbf{h}''= \mathbf{h} \\ \# \mathbf{h}' = k_1 \\ \# \mathbf{h}''=k_2<k}} \|\mathbf{h}' \|^{k_1}_{C^{2,\alpha}} \left\| \left(\sum_{j_1+j_2= k_2} (*\mathcal{D})^{j_1}\tau * (*\mathcal{D})^{j_2} \mathcal{J}_2(f)\right)(\mathbf{h}'')\right\|_{C^{0,\alpha}} \\&\leq C' \|\mathbf{h}\|^k_{C^{2,\alpha}} .\end{split}\]

For a shrinking soliton, it is known that $\frac{1}{2\tau^{g_0}} \notin \spec \Lap^{g_0}_f$ (see for instance \cite{CZ}). Therefore, by continuity, for $\epsilon$ small enough we will have $\frac{1}{2\tau^g} \notin \spec\Lap^g_f$. In particular, $(\Lap_f+\frac{1}{2\tau})$ is coercive (uniformly in $g$), so by elliptic Schauder theory 
\[
\| \mathcal{D}^k f_g (\mathbf{h}) \|_{C^{2,\alpha}} \leq C\|(\Lap^g_f+\frac{1}{2\tau^g}) \left(\mathcal{D}^k f_g( \mathbf{h})\right)\|_{C^{0,\alpha}}  \leq C' \|\mathbf{h}\|^k_{C^{2,\alpha}}.
\]

Finally, note that again all the basic geometric quantities in the definition \[\Phi(g) = \tau^g(\Ric(g) +\Hess^g f^g) - \frac{g}{2}\] depend on at most the 2-jet of $g$. Then \[ \mathcal{D}^k \Phi (\mathbf{h}) =  \sum_{\substack{\mathbf{h}' \cup \mathbf{h}''= \mathbf{h} \\ \# \mathbf{h}' = k_1 \\ \# \mathbf{h}''=k_2}} (*\mathcal{J}_2)(\mathbf{h}') * \left(\sum_{j_1+j_2=k_2} (*\mathcal{D})^{j_1}\tau * (*\mathcal{D})^{j_2}f\right)(\mathbf{h}'')+ h_1* \cdots * h_k.\] By the previous estimates we conclude that indeed $\|\mathcal{D}^k \Phi_g (\mathbf{h})\|_{C^{0,\alpha}} \leq C \|\mathbf{h}\|^k_{C^{2,\alpha}}$, which completes the induction. 
\end{proof}

\bibliographystyle{amsalpha}
\bibliography{ricci-soliton}

\end{document}